\title{Orders on Free Metabelian Groups}
\author{Wenhao Wang}
\address{Department of Mathematical Logic\\
  The Steklov Mathematical Institute of Russian Academy of Science\\
 Moscow, Russia, 119991}
\email[W.~Wang]{wenhaowang@mi-ras.ru}
\newtheorem{theorem}{Theorem}[section]
\newtheorem{lemma}[theorem]{Lemma}
\newtheorem{corollary}[theorem]{Corollary}
\newenvironment{customthm}[1]
  {\innercustomthm}
  {\endinnercustomthm}
\theoremstyle{definition}
\newtheorem{proposition}[theorem]{Proposition}
\newtheorem{definition}[theorem]{Definition}
\theoremstyle{remark}
\newtheorem*{rems}{Remark}
\DeclareMathOperator{\supp}{supp}
\DeclareMathOperator{\CI}{CI}
\begin{document}

\maketitle

\begin{abstract}
	A bi-order on a group $G$ is a total, bi-multiplication invariant order. A subset $S$ in an ordered group $(G,\leqslant)$ is convex if for all $f\leqslant g$ in $S$, every element $h\in G$ satisfying $f\leqslant h \leqslant g$ belongs to $S$. In this paper, we show that the derived subgroup of the free metabelian group of rank 2 is convex with respect to any bi-order. Moreover, we study the convex hull of the derived subgroup of a free metabelian group of higher rank. As an application, we prove that the space of bi-order of non-abelian free metabelian group of finite rank is homeomorphic to the Cantor set. In addition, we show that no bi-order for these groups can be recognised by a regular language.
\end{abstract}

\section{Introduction}

A group $G$ is \emph{bi-orderable} if there exists a total order $\leqslant$ which is invariant under multiplication from both sides, i.e., if for $g,h\in G$ with $g\leqslant h$ then $f_1 g f_2\leqslant f_1 h f_2$ for all $f_1,f_2\in G$. Such a total order is called a bi-invariant order or bi-order for short on the group $G$. Similarly, a group $G$ is \emph{left-orderable} (\emph{right-orderable}) if there exists a left-invariant (right-invariant) order on $G$, in which the order is invariant under left-multiplication (right-multiplication). It is not hard to see that right-orders and left-orders have a one-to-one correspondence. Thus, in this paper, we will only discuss left-orders and bi-orders on a group. For every order $\leqslant$ on $G$, the positive cone $P_\leqslant$ consists of all positive elements in $G$ under $\leqslant$. It is a semigroup and $G=P\sqcup P^{-1}\sqcup\{1\}$. If $\leqslant$ is bi-invariant, then $P$ is invariant under conjugation. A positive cone completely determines the corresponding order, and vice versa. Hence, in this paper, we will identify positive cones and their associated orders when it is convenient. 

The free metabelian group of rank $n$ is the quotient of the free group of rank $n$ by its second derived subgroup, which processes the following presentation.
\[M_n=\langle a_{1},a_2,\dots,a_n \mid [[u,v],[w,z]]=1, \forall u,v,w,z\in \{a_1,a_2,\dots,a_n\}^{*}\rangle.\]
Recall that for any set $X$, the notation $X^*$ denotes the free monoid (including the empty word) generated by $X$ and $X^{-1}$. $M_n$ is bi-orderable since by Magnus embedding \cite{Magnus1939} it is a subgroup of $\mathbb Z^n\wr \mathbb Z^n$, which is bi-orderable as bi-orderability is closed under taking wreath products \cite[Theorem 2.1.1]{mura1977OrderableGroups}.

In this paper, we study the convex hull of the derived subgroup of a free metabelian group with respect to a bi-order, where the convex hull $\overline H$ of a subgroup $H$ is the smallest convex subgroup containing $H$. Let $M_n$ be the free metabelian group of rank $n$. We show that the derived subgroup is always convex when $n=2$.

\begin{customthm}{A}[\cref{rank2}]
\label{rank2Intro}
	$M_2'$ is convex with respect to any bi-invariant order on $M_2$.
\end{customthm}

When $n\geqslant 3$, we construct a bi-order such that $\overline M_n'\neq M_n'$ in \cref{notConvex}. But we can still obtain some information about the order from the restriction of the order on the derived subgroup.

\begin{customthm}{B}[\cref{higherRank}]
	\label{quotientRank2Intro}
		Let $\leqslant$	be a bi-invariant order on $M_n$ then the rank of $M_n/\overline M_n'$ is greater or equal to 2, where $\overline M_n'$ is the convex hull of the derived subgroup with respect to $\leqslant$. 
\end{customthm}

Let $\mathcal{LO}(G)$ be the set of all left-orders on $G$. It carries a natural topology whose sub-basis is the family of sets of the form $V_g=\{P_\leqslant\mid 1\leqslant g\}$ for $g\in G$. The space $\mathcal{LO}(G)$ is a closed subset of the Cantor set and is metrizable (See, for example, \cite{deroin2014GroupsOrders}, \cite{clay2016OrderedGroupsTopology}). And the space of all bi-orders $\mathcal O(G)$ is a closed subspace of $\mathcal{LO}(G)$. A lot has been known about the structure of these spaces. The space of bi-orders of a non-cyclic free abelian group is homeomorphic to the Cantor set \cite{sikora2004TopologySpacesOrderings} and the same holds true for a non-abelian free group \cite{mccleary1989FreeLattice}, \cite{dovhyi2023Topology}. For the Braid group $B_n, n\geqslant 3$, the space $\mathcal{LO}(B_n)$ is infinite and has isolated points \cite{dubrovina2001OnBraidGroups}. Tararin gave a complete classification of groups which have finite space of left-orders \cite[Proposition 5.2.1]{kopytov1996RightOrderedGroups}. Due to \cref{rank2Intro} and \cref{quotientRank2Intro} we have:

\begin{customthm}{C}[\cref{cantorSet}]
	The space $\mathcal O(M_n)$ is homeomorphic to the Cantor set for $n\geqslant 2$. 
\end{customthm} 

Note that the space of left orders of a free metabelian group of rank $2$ or higher is also a Cantor set \cite{rivas2016SpaceLeftOrdering}.

Let $X$ be a generating set of $G$. A language $\mathcal L$ over $X$ is a subset of the free monoid $X^{*}$. A language is \emph{regular} if it is accepted by a finite state automaton, and is \emph{context-free} if it is accepted by a pushdown machine. We refer to \cite{hopcroft1979IntroductionAutomataTheory} for the definitions of finite state automata and pushdown machines.  
 
An order is computable if there exists an algorithm deciding if $u\leqslant v$ in $G$ for any pair of words $u,v \in X^*$. An order is regular (context-free) if the positive cone can be recognised by a regular (context-free) language. Computability of left-orders and bi-orders has gained a lot of interest lately. Harrison-Trainor \cite{harrison-trainor2018LeftOrderableComputable} have shown that there exists a left-orderable group with solvable word problem but no computable left-orders, while Darbinyan \cite{darbinyan2020Computability} has constructed an example for the case of bi-orders. \v{S}uni\'{c} \cite{sunic2013ExplicitLeftOrders, sunic2013OrdersFreeGroups} showed that there exists a one-counter left-orders on the free groups and, later with Hermiller \cite{hermiller2017NoPostiveCone}, proved that left-orders on free products are never regular which implies that such left-orders constructed by \v{S}uni\'{c} are the computationally simplest orders in the sense of Chomsky hierarchy. Antol\'{i}n, Rivas and Su \cite{antolin2021regular} have studied regular and context-free left-orders on groups and have shown that the metabelian Baumslag-Solitar group $BS(1,q), |q|>1$ does not admit a regular bi-invariant order. 

Recall that a group is \emph{computably bi-orderable} if the group admits a computable bi-order. As a consequence of a more general \cref{notRegularConradian}, we have that:
\begin{customthm}{D}[\cref{computablyBiorderable},\cref{notRegularMetabelian}]
	\label{nonregularityIntro}
	Let $M_n$ be the free metabelian group of rank $n$. Then every $M_n$ is computably bi-orderable. Moreover, $M_n$ admits a regular bi-order if and only if $n=1$.
\end{customthm}
	When $n=2$, it can be shown that $M_2$ admits a context-free bi-order. It remains unknown if the same holds true for $n\geqslant 3$. 

The paper is organised in the following way. In Section 2 we introduce the notion of comparison index as in Section 3 we introduce $Q$-invariant orders for $\mathbb ZQ$-modules where $Q$ is a free abelian group. In Section 4 and 5 we study $Q$-invariant orders for varies cases. We analyse the convex hull of the derived subgroups of free metabelian groups and prove the main theorems in Section 6 and 7. In Section 8 we show that a free metabelian group of finite rank is computably bi-orderable, but the order is never regular unless the group is the infinite cyclic group.

\emph{Acknowledgements.} The author acknowledges Igor Lysenok for many inspirational discussions on this subject and Crist\'{o}bal Rivas for pointing out that \cref{notRegularMetabelian} holds for Conradian orders. The author is also grateful to the referee/s for his/her many useful comments which led to a refinement of the paper and some new results. The work was performed at the Steklov International Mathematical Center and supported by the Ministry of Science and Higher Education of the Russian Federation (agreement no. 075-15-2022-265).

\section{Comparison Index on Orderable Abelian Groups}

It is convenient to firstly make our convention throughout the paper. For elements $f,g$ in a group $G$, we set that $f^g=g^{-1}fg$ and $[f,g]=f^{-1}g^{-1}fg$.

An abelian group is bi-orderable if and only if it is torsion-free. Orders on abelian groups of finite $\mathbb R$-rank have been well-understood (See \cite[Section 10.2]{clay2016OrderedGroupsTopology}\cite{teh1961ConstructionOrdersAbelian}). In particular, consider a free abelian group $A$ of rank $k$ and fix a basis of it. As $A$ is canonically embedded into $\mathbb R^k$ as a lattice, every order on $A$ corresponds to a hyperplane passing through the origin, where the hyperplane separates elements in $A$ into positive elements, negative elements and the identity. If the intersection of the hyperplane and $A$ is non-empty, we also need to choose an order on the hyperplane itself. Such a hyperplane can be determined by its normal vector $\mathbf{n}$ which points to the positive side of the lattice, and an element $g$ in $A$ is positive if the dot product $g\cdot \mathbf{n}$ is positive.

To capture this structure of order on a free abelian group, we introduce the notion of comparison index. 

\begin{definition}
	Let $x,y$ be positive elements in an ordered abelian group $(A,\leqslant)$. Then we define the comparison index of $x,y$ with respect to $\leqslant$ as the following.
\[\CI(x,y;\leqslant)=\lim_{n \to \infty} -\frac{m(n)}{n}, m(n)=\min \{m\mid mx+ny\geqslant 0_A\},n>0.\]
If the limit or the minimal element do not exist, we will denote $\CI(x,y; P_{\leqslant})=\infty$. And we make the convention that $\CI(0,0)=1$. If there is no ambiguity with the order considered, we will use $\CI(x,y)$ instead of $\CI(x,y;\leqslant)$ to relax the notation. 
\end{definition}

Notice that $mx+ny\geqslant 0_A$ if and only if $-mx-ny\leqslant 0_A$, and hence $|m(n)+m(-n)|\leqslant 1$. Therefore, an alternative definition of the comparison index is 
\[\CI(x,y;\leqslant)=\lim_{n \to -\infty} -\frac{m(n)}{n}, m(n)=\min \{m\mid mx+ny\geqslant 0_A\},n<0.\]

We also define the absolute value of an element in an ordered group $(G,\leqslant)$ as 
\[|g|=\begin{cases}
	g  &\text{if }g>1_G,\\
	g^{-1} &\text{if }g\leqslant 1_G.
\end{cases}\]

Fix an order $\leqslant$ on $A$. Given $x,y\in A$, we define the vector $\mathbf{r}(x,y)$ as follows.

\[\mathbf{r}(x,y)=\begin{cases}
	(1,\CI(|x|,|y|)) & \text{if $\CI(|x|,|y|;\leqslant)\neq \infty$},\\
	(0,1) & \text{if $\CI(|x|,|y|;\leqslant)= \infty$}.
\end{cases}\]

The next proposition gives a geometric interpretation of the comparison index. 

\begin{proposition}[Geometric interpretation of $\CI$]
\label{geometricInterpretation}
	Let $(A,\leqslant)$ be an ordered abelian group and $x,y$ be two positive elements. We have 
	\begin{enumerate}[(i)]
		\item $\CI(x,y)\in [0,\infty)\cup\{\infty\}$.
		\item Suppose that $\langle x,y\rangle$ has rank $2$ and $\mathbf{r}=\mathbf{r}(x,y)$. Let $(\langle x,y\rangle,\leqslant)\to (\mathbb Z^2,\prec)$ be the order preserving isomorphism such that $x\mapsto (1,0)$ and $y\mapsto (0,1)$. The order of $(\mathbb Z^2,\prec)$ can be extended to $(\mathbb R^2,\prec')$. The vector $\mathbf{r}$ is the normal vector to the hyperplane defining $\prec$ and $(0,0)\prec' \mathbf{r}$. In particular,
			\[(m, n) \cdot \mathbf{r}>0 \Rightarrow mx+ny>0_A.\]
	\end{enumerate}
\end{proposition}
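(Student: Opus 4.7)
My plan is to treat (i) via Fekete's subadditive lemma and (ii) by identifying $\mathbf r(x,y)$ with a normal vector of the defining hyperplane of the extension $\prec'$ on $\mathbb R^2$.

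For (i), since $x,y>0_A$, for every $n>0$ the element $0\cdot x+ny=ny$ is positive, so $m(n)\leqslant 0$ and $-m(n)/n\geqslant 0$. Adding the witnesses for $n_1$ and $n_2$ gives $(m(n_1)+m(n_2))x+(n_1+n_2)y\geqslant 0_A$, yielding the subadditivity $m(n_1+n_2)\leqslant m(n_1)+m(n_2)$. By Fekete's lemma the limit $\lim_{n\to\infty}m(n)/n=\inf_{n>0}m(n)/n$ exists in $[-\infty,0]$, whence $\CI(x,y)\in[0,\infty]$.

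For (ii), use the standard fact that $\prec'$ is determined by a nonzero vector $\mathbf n=(n_1,n_2)\in\mathbb R^2$ via $v\cdot\mathbf n>0\Rightarrow v\succ'(0,0)$, with the hyperplane being the kernel of $v\mapsto v\cdot\mathbf n$. Since $(1,0),(0,1)\succ'(0,0)$, we have $n_1,n_2\geqslant 0$. It suffices to show that $\mathbf r(x,y)$ is a positive scalar multiple of $\mathbf n$: then $(m,n)\cdot\mathbf r>0\Rightarrow(m,n)\cdot\mathbf n>0\Rightarrow mx+ny>0_A$, and the orientation $(0,0)\prec'\mathbf r$ follows from $\mathbf r\cdot\mathbf n>0$.

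To match $\mathbf r$ with $\mathbf n$, I distinguish two cases. If $n_1>0$, set $s=n_2/n_1\geqslant 0$; then for $n>0$ one checks that $m(n)$ differs from $\lceil -ns\rceil$ by at most one (the offset arising when $(m,n)$ lies on the kernel line, where the sign of $mx+ny$ is decided by the secondary order), so $-m(n)/n\to s$, giving $\CI(x,y)=s$ and $\mathbf r=(1,s)=\frac{1}{n_1}\mathbf n$. If $n_1=0$, then $\mathbf n$ is a positive multiple of $(0,1)$ and $(m,n)\cdot\mathbf n=n_2 n>0$ for every $n>0$ and every $m\in\mathbb Z$, so $mx+ny>0_A$ for all such pairs; hence $\{m:mx+ny\geqslant 0_A\}=\mathbb Z$, the minimum $m(n)$ does not exist, and by convention $\CI(x,y)=\infty$, so $\mathbf r=(0,1)=\frac{1}{n_2}\mathbf n$. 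The main subtlety is the $O(1)$ correction when $s$ is rational and the kernel line contains nonzero lattice points, where $m(n)$ depends on the secondary order along the line; however, this correction does not affect the asymptotic $-m(n)/n\to s$, so the argument goes through unchanged.
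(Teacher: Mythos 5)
Your proof is correct. For part (ii) you take essentially the paper's route: identify $\langle x,y\rangle$ with $\mathbb Z^2$, describe $\prec'$ by a normal vector (the paper writes the separating line as $y=kx$ and shows $-k\cdot\CI(x,y)=1$, which is the same computation as your $\CI(x,y)=n_2/n_1$), and control $m(n)$ up to an additive constant coming from lattice points on the separating line; your case split on $n_1=0$ versus $n_1>0$ is if anything slightly cleaner than the paper's split on whether $x\gg y$, which leaves the sub-case $y\gg x$ (where the line is $y=0$ and $\CI=\infty$) somewhat implicit. The genuine difference is in part (i): the paper only verifies $-m(n)/n\geqslant 0$ and treats the two lexicographic extremes, tacitly relying on the convention that a non-existent limit is declared $\infty$, whereas your subadditivity $m(n_1+n_2)\leqslant m(n_1)+m(n_2)$ combined with Fekete's lemma actually proves the limit exists in $[-\infty,0]$ whenever the minima exist --- a stronger and tidier argument. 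One small point worth making explicit there: the minima $m(n)$ either exist for all $n>0$ or for none, since $\{m\mid mx+ny\geqslant 0_A\}=\mathbb Z$ for some $n>0$ exactly when $y\gg x$; so the convention $\CI=\infty$ covers the remaining case uniformly and your two mechanisms for producing $\infty$ (a divergent Fekete limit and a non-existent minimum) do not conflict.
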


\begin{proof}
	\begin{enumerate}[(i)]
		\item Since $x,y$ are both positive, then so is $mx+ny$ for any $m \geqslant 0, n>0$. Thus, for a fixed $n>0$ the minimum $m(n)=\min \{m\mid mx+ny\geqslant 0_A\}$ is non-positive. Hence, we have that $-\frac{m(n)}{n}\geqslant 0$ for all $n>0$.
			
			If $x>ny$ for any $n\in \mathbb Z$, then 
			\[\CI(x,y;\leqslant)=\lim_{n \to \infty} -\frac{m(n)}{n}=\lim_{n \to \infty} -\frac{0}{n}=0.\]
			
			If $y>mx$ for any $m$, then the limit
			\[\lim_{n \to \infty} -\frac{m(n)}{n}\]
			does not exist, which implies that $\CI(x,y)=\infty$
		
			Therefore, $\CI(x,y)\in [0,\infty) \cup \{\infty\}$.
		\item We will abuse the notations $x$ and $y$, and denote the axes containing $(1,0)$ and $(0,1)$ by $x$-axis and $y$-axis, respectively, of plane $(\mathbb R^2,\prec')$. 
		
			First, we consider the case that $x<ny$ for some $n$.  Let $l:y=kx$ be the hyperplane defining $\prec'$ where $k<0$, since both $x$ and $y$ are positive. We want to show that $-k \cdot \CI(x,y)=1$.
			
			Note that since $(\langle x,y\rangle,\leqslant)$ and $(\mathbb Z^2,\prec)$ are isomorphic, where the isomorphism defined in the statement is order-preserving, we have
				\[mx+ny \geqslant 0_{A} \iff (m,n)\succeq 0_{\mathbb Z^2}.\]
			
			Since $l$ defines $\prec$, then for a fixed $n>0$,
				\[(\lfloor \frac{n}{k}\rfloor+1,n)\succeq 0_{\mathbb Z^2}, \text{ and } (\lfloor \frac{n}{k}\rfloor-1,n)\prec 0_{\mathbb Z^2}.\]
			Hence
				\[\lfloor \frac{n}{k}\rfloor -1<m(n)\leqslant \lfloor \frac{n}{k}\rfloor +1,\]
				where $m(n)=\min \{m\mid mx+ny\geqslant 0_A\}.$ Thus 
				\[\lim_{n\to \infty} -\frac{1}{n}(\lfloor \frac{n}{k}\rfloor +1)\leqslant \CI(x,y)\leqslant \lim_{n\to \infty} -\frac{1}{n}(\lfloor \frac{n}{k}\rfloor -1).\]
			Therefore, we have $-k \cdot \CI(x,y)=1$. Consequently, $\mathbf{r}\perp l$ and $\mathbf{r}\succ' 0_{\mathbb Z^2}$.
				
			If $x>ny$ for all $n\in \mathbb Z$, then $l:y=0$ is the hyperplane defining $\prec$. Thus, $\mathbf{r}=(1,0)\perp l$ and $\mathbf{r}\succ 0_{\mathbb Z^2}$. 
			
			The rest of the statement follows immediately. 
	\end{enumerate}
\end{proof}

With this geometric interpretation, we have that:

\begin{theorem}
\label{dotProduct}
	Let $(A,\leqslant)$ be an ordered abelian group and $x_1,x_2,\dots,x_n$ be positive elements such that $\langle x_1,x_2,\dots,x_n\rangle$ is an abelian group of rank $n$. Suppose $r_i=\CI(x_1,x_i)\neq \infty$ for $i=2,3,\dots,n$. Then
	\[(m_1,m_2,\dots,m_n) \cdot (1,r_2,\dots,r_n)>0 \Rightarrow \sum_{i=1}^n m_ix_i>0_A.\]
\end{theorem}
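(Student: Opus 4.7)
The plan is to reduce the general $n$-variable statement to the two-variable case already proved in \cref{geometricInterpretation}(ii), by distributing $x_1$ across the remaining generators and exploiting the freedom gained by scaling. Write $M := m_1 + \sum_{i=2}^{n} m_i r_i > 0$. For a positive integer $k$ to be chosen later, I will seek an integer decomposition $k m_1 = \sum_{i=2}^{n} m_1^{(i)}$ such that, for every $i \geqslant 2$, the pair of integers $(m_1^{(i)}, k m_i)$ satisfies the two-variable hypothesis $m_1^{(i)} + k m_i r_i > 0$. Granting this decomposition, the two-dimensional result applied to each pair $(x_1, x_i)$ (which spans a rank-$2$ subgroup since $\langle x_1, \ldots, x_n \rangle$ has rank $n$, and for which $\CI(x_1, x_i) = r_i \neq \infty$ by hypothesis) gives $m_1^{(i)} x_1 + k m_i x_i > 0_A$. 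Summing over $i \geqslant 2$ produces $k \sum_{i=1}^{n} m_i x_i > 0_A$, from which $\sum m_i x_i > 0_A$ follows at once because $k > 0$.

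The existence of such a decomposition is the combinatorial heart of the argument. For each $i \geqslant 2$ the requirement $m_1^{(i)} + k m_i r_i > 0$ forces $m_1^{(i)} \geqslant \lfloor -k m_i r_i \rfloor + 1$, and the sum of these lower bounds is at most $-k \sum_{i \geqslant 2} m_i r_i + (n-1) = k m_1 - kM + (n-1)$. Choosing $k$ large enough that $kM \geqslant n-1$ ensures this bound is at most $k m_1$, so one may fix $n-2$ of the $m_1^{(i)}$ at their respective lower bounds and absorb the remainder into the last coordinate, which then automatically exceeds its own lower bound.

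The main obstacle is purely this rounding loss of up to $n-1$ caused by distributing an integer $km_1$ as a sum of integers with real-valued lower thresholds; multiplying everything by a sufficiently large $k$ absorbs it because $kM$ grows linearly in $k$ while the loss is fixed. Past this step the proof is mechanical and, in effect, exhibits $(1, r_2, \ldots, r_n)$ as a normal vector to the hyperplane defining the order on $\langle x_1, \ldots, x_n \rangle$, generalising the two-dimensional picture of \cref{geometricInterpretation}(ii) in a dimension-agnostic manner.
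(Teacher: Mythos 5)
Your argument is correct, and it takes a genuinely different route from the paper's. The paper proves \cref{dotProduct} geometrically: it transports the order to $\mathbb Z^n\subset\mathbb R^n$, invokes the fact that the extended order is cut out by a hyperplane $H$, observes that each intersection $l_i=H\cap\operatorname{span}(x_1,x_i)$ is perpendicular to $(1,0,\dots,r_i,\dots,0)$, and concludes that $(1,r_2,\dots,r_n)$ is normal to $H$ and points to the positive side. You instead never leave dimension two: you scale by a large integer $k$, split $km_1$ into integer parts $m_1^{(i)}$ so that each pair $(m_1^{(i)},km_i)$ satisfies the two-variable criterion of \cref{geometricInterpretation}(ii), sum the resulting positive elements $m_1^{(i)}x_1+km_ix_i>0_A$, and divide by $k$. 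The bookkeeping is sound: the integer lower bounds $\lfloor -km_ir_i\rfloor+1$ sum to at most $km_1-kM+(n-1)$, so $kM\geqslant n-1$ suffices, and each pair $\langle x_1,x_i\rangle$ does have rank $2$ with $\CI(x_1,x_i)\neq\infty$ as required. What your approach buys is self-containedness: it uses only the already-established rank-two case plus elementary arithmetic, and sidesteps the paper's (somewhat informally justified) appeals to extending the order to $\mathbb R^n$ and to the lines $l_i$ spanning $H$. What the paper's approach buys is the conceptual picture --- it exhibits $(1,r_2,\dots,r_n)$ directly as the normal vector to the defining hyperplane, which is the viewpoint reused elsewhere (e.g.\ in \cref{notRegularConradian}). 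The only cosmetic caveat is that your decomposition is vacuous for $n=1$, where the statement is trivial anyway; a one-line remark disposing of that case would make the write-up airtight.
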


\begin{proof}
	Let $(\langle x_1,x_2,\dots,x_n\rangle,\leqslant)\to (\mathbb Z^n,\prec)\subset (\mathbb R^n,\prec')$ be the order-preserving isomorphism such that $x_i\mapsto (0,\dots,1,\dots,0)$ in which the $i$-th coordinate is 1, and $\prec'$ is extended by $\prec$. Let $H$ be the hyperplane defining the order $\prec'$ in the space $\mathbb R^n$. Then the intersection, denoted by $l_i$, of $H$ and the $\mathbb R$-subspace spanned by $\{x_1,x_i\}$ is perpendicular to $\mathbf{r_i}=(1,0,\dots,r_i,\dots,0)$ in which the $i$-th coordinate is $r_i$ for $i=2,3,\dots,n$. Therefore, $\mathbf{r}=(1,r_2,r_3,\dots,r_n)$ is perpendicular to all $l_i$'s for $i=2,\dots,n$ and hence to $H$. Since all $r_i$ are non-negative and all $x_i$ are positive, then $\mathbf{r}\succ' 0_{\mathbb R^n}.$ It follows that
	\[(m_1,m_2,\dots,m_n) \cdot \mathbf{r} >0 \Rightarrow (m_1,m_2,\dots,m_n)\succ 0_{\mathbb Z^n} \Rightarrow \sum_{i=1}^n m_ix_i>0_A.\] 
\end{proof}

\begin{rems}
	Since $g>0_A$ if and only if $-g<0_A$, then we have that
	\[(m_1,m_2,\dots,m_n) \cdot (1,r_2,\dots,r_n)<0 \Rightarrow \sum_{i=1}^n m_ix_i<0_A.\]
\end{rems}

The following proposition provides some useful properties of the comparison index.

\begin{proposition}
\label{propositionComparisonIndex}
	Let $(A,\leqslant)$ be an ordered abelian group and $x,y,z$ be positive elements. We have
	\begin{enumerate}[(i)]
		\item $\CI(x,x)=1$. More generally, if $\langle x,y\rangle$ is cyclic and $mx=ny$, then $\CI(x,y)=\frac{m}{n}.$
		\item $\CI(x,y)=1/\CI(y,x)$ with the convention $\frac{1}{0}=\infty$ and $\frac{1}{\infty}=0.$
		\item $\CI(x,y) \cdot \CI(y,z)=\CI(x,z)$ if $\{\CI(x,y),\CI(y,z)\}\neq\{0,\infty\}$.
		\item Let $Q$ be a group acting on $A$ by order-preserving isomorphisms, then for all $q\in Q$, $\CI(x,y)=\CI(q \cdot x,q \cdot y)$.
	\end{enumerate}
\end{proposition}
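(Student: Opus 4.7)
My plan is to prove (i) and (iv) by direct computation from the definition, and to prove (ii) and (iii) by invoking the geometric picture of \cref{geometricInterpretation} and \cref{dotProduct} in the generic (full-rank, finite-index) case, with separate verification of the degenerate subcases.

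For (i), when $x=y$ we have $m(n)=\min\{m:(m+n)x\geqslant 0_A\}=-n$, giving $\CI(x,x)=1$. For the general cyclic case, I write $\langle x,y\rangle=\langle g\rangle$ with $g>0_A$, so that $x=ag$, $y=bg$ with $a,b\in\mathbb{Z}_{>0}$, and $mx=ny$ becomes $ma=nb$; then $(m'x+ky)=(m'a+kb)g\geqslant 0_A$ iff $m'a+kb\geqslant 0$, so $m(k)=\lceil -kb/a\rceil$ and taking the limit yields $\CI(x,y)=b/a=m/n$. For (iv), any order-preserving isomorphism $q$ preserves the defining inequality $mx+ny\geqslant 0_A\iff m(qx)+n(qy)\geqslant 0_A$, so the sequence $m(n)$ is unchanged and $\CI(x,y)=\CI(qx,qy)$. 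For (ii), the cyclic subcase follows from (i). When $\langle x,y\rangle$ has rank $2$ and $r=\CI(x,y)\in(0,\infty)$, \cref{geometricInterpretation} identifies $(1,r)$ as the normal vector to the hyperplane cutting the order in the ordered basis $(x,y)$; swapping the roles of the basis elements to $(y,x)$, the same normal has coordinates $(r,1)\propto (1,1/r)$, giving $\CI(y,x)=1/r$. The extremal case is characterised by $\CI(x,y)=0\iff x>ny$ for every $n\in\mathbb{Z}_{>0}$, from which a direct argument shows that for every $k>0$ the set $\{m:my+kx\geqslant 0_A\}$ is unbounded below, whence $\CI(y,x)=\infty$.

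For (iii), the generic case is that $\langle x,y,z\rangle$ has rank $3$ and all three comparison indices are finite: \cref{dotProduct} applied to the ordered tuple $(x,y,z)$ yields hyperplane normal $(1,\CI(x,y),\CI(x,z))$, while applying it to $(y,x,z)$ and rewriting the resulting normal in the $(x,y,z)$-basis gives $(\CI(y,x),1,\CI(y,z))$. Both vectors are positive normals to the same hyperplane, hence proportional, and equating coordinates (using (ii) to rewrite $\CI(y,x)=1/\CI(x,y)$) yields $\CI(x,z)=\CI(x,y)\cdot\CI(y,z)$. The subcase where $\langle x,y,z\rangle$ has rank at most $2$ is handled by choosing a $\mathbb{Z}$-basis of $\langle x,y,z\rangle$ and reducing to the rank-$2$ computation via the scaling identities $\CI(cx,y)=\CI(x,y)/c$ and $\CI(x,cy)=c\,\CI(x,y)$ for $c\in\mathbb{Z}_{>0}$, which follow from the definition. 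The remaining subcases, where some index is $0$ or $\infty$ (but not the excluded pair $\{0,\infty\}$), are verified by chaining the equivalences $\CI(a,b)=0\iff a>nb$ for all $n$ and $\CI(a,b)=\infty\iff\CI(b,a)=0$ from (ii); for instance, $\CI(x,y)=0$ together with $\CI(y,z)<\infty$ forces $z$ to be bounded above by a multiple of $y$ (via $m(1)$) and hence by a multiple of $x$, yielding $\CI(x,z)=0$.

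The main obstacle is the completeness of the case analysis in (iii): although each degenerate subcase is routine, one must enumerate carefully the configurations of $0$, $\infty$, and finite positive values among $\CI(x,y)$, $\CI(y,z)$, $\CI(x,z)$ to ensure that only the truly indeterminate product $0\cdot\infty$ excluded by the hypothesis is missing, and that all other configurations are consistent with the multiplicative identity.
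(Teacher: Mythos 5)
Your proof is correct, and parts (i), (ii) and (iv) proceed essentially as in the paper (direct computation of $m(n)$ for (i) and (iv), the geometric interpretation plus (i) for (ii)). Where you genuinely diverge is part (iii). The paper never touches the rank-$3$ configuration: it treats rank $1$ via (i) and then assumes $z=ax+by$, computing $m_z(n)=m_y(bn)-an$ directly and passing to the limit to get $\CI(x,z)=a+b\,\CI(x,y)$, from which multiplicativity follows by applying the same formula with $x$ and $y$ swapped. Your argument instead handles the generic rank-$3$ case by applying \cref{dotProduct} to the two orderings $(x,y,z)$ and $(y,x,z)$ and equating the resulting positive normal vectors up to scaling, and then reduces rank $\leqslant 2$ to a basis computation via the scaling identities $\CI(cx,y)=\CI(x,y)/c$ and $\CI(x,cy)=c\,\CI(x,y)$. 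Each approach patches a small gap in the other: the paper's written proof simply omits the case where $x,y,z$ are rationally independent (which your normal-vector argument covers), while its "WLOG $z=ax+by$" is not quite a genuine WLOG in a rank-$2$ group (only a multiple of $z$ need be an integral combination of $x$ and $y$), which is exactly what your scaling identities repair. Your treatment of the degenerate cases ($0$ or $\infty$ occurring, excluding the pair $\{0,\infty\}$) matches the paper's in substance — both reduce to the characterisations $\CI(x,y)=0\iff x\gg y$ and the chain of domination relations — and your closing remark correctly identifies the case enumeration as the only bookkeeping burden.
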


\begin{proof}
	\begin{enumerate}[(i)]
		\item For positive elements $x,y$ such that $-mx+ny=0$ we have $-kmx+kny=0, $ for all $k>0$. Thus 
			\[\CI(x,y)=\lim_{n\to \infty} -\frac{-km}{kn}=\frac{m}{n}.\]
		\item It follows directly from the geometric interpretation of comparison index and (i).
		\item If $\langle x,y,z\rangle$ has rank 1, then the result follows from (i). 

			If $\langle x,y,z\rangle$ has rank 2, WLOG, we assume that $z=ax+by$ for $a,b\in \mathbb Z$. For the case that $\CI(x,y)\in (0,\infty)$. It is enough to show that $\CI(x,z)=a+b\CI(x,y)$. We let
			\[m_y(n)=\min\{ m \mid mx+ny\geqslant 0_A\}, \text{ and } m_z(n)=\min\{ m \mid mx+nz\geqslant 0_A\}.\]
			Notice that 
			\[mx+nz=(m+an)x+bny.\]
			Thus 
			\[m_z(n)=m_y(bn)-an.\]
			It follows that 
			\[\CI(x,z)=\lim_{n\to \infty} -\frac{m_z(n)}{n}=\lim_{n\to \infty} -b\cdot \frac{m_y(bn)}{bn}+a=a+b\CI(x,y).\]
			Therefore, if $z=ax+by$ we have
			\begin{align*}
				\CI(x,y) \cdot \CI(y,z)=\CI(x,y) \cdot \CI(y,ax+by)&=\CI(x,y)\cdot (\frac{a}{\CI(x,y)}+b)\\
				&=a+b\CI(x,y)\\
				&=\CI(x,z).
			\end{align*}
			If $\CI(x,y)=0$, then $x\geqslant ny$ for all $n$. Since $z=ax+by$ is positive, we have that $a\geqslant 0$. By the assumption $\{\CI(x,y),\CI(y,z)\}\neq\{0,\infty\}$ we have $\CI(y,z)\neq \infty$, which implies that there exists $n$ such that $ny>ax+by$. It forces $a=0$. Hence, $\CI(x,z)=0$. By (i) we have that $\CI(y,z)=b$, then $\CI(x,y)\cdot \CI(y,z)=\CI(x,z)$. The case where $\CI(x,y)=\infty$ is similar.

		\item Since $Q$ acts by order-preserving isomorphism, we have
			\[m(n)=\min \{m \mid mx+ny\geqslant 0_A\}=\min \{m \mid m (q\cdot x)+n(q \cdot y)\geqslant 0_A\}.\]
			Therefore
			\[\CI(x,y)=\lim_{n\to \infty} -\frac{m(n)}{n}=\CI(q\cdot x,q\cdot y).\]
	\end{enumerate}
\end{proof}

\begin{definition}
	Let $(A,\leqslant)$ be an ordered abelian group and $x,y$ be non-trivial elements. We say that $x$ and $y$ are \emph{comparable with respect to $\leqslant$} if $\CI(|x|,|y|;\leqslant)\in (0,\infty)$, and we write $x \sim y$. We say that $x$ is \emph{lexicographically less} than $y$ with respect to $\leqslant$ if $\CI(|x|,|y|;\leqslant)=\infty$, and we write $x\ll y$. Furthermore, we say that $x$ is \emph{lexicographically greater} than $y$ with respect to $\leqslant$ if $\CI(|x|,|y|;\leqslant)=0$, and we write $x\gg y$. 
\end{definition}

Note that $0$ is lexicographically less than any non-trivial element. 

\begin{proposition}
	\label{inducedRelationPartialOrder}
	Let $(A,\leqslant)$ be an ordered abelian group.
	\begin{enumerate}[(i)]
		\item The relation $\sim$ (being comparable) is an equivalence relation.
		\item For all non-trivial elements $x,y\in A$, $x\sim y$ if and only if there exist $m,n\in \mathbb Z$ such that $x\leqslant ny$ and $y\leqslant mx$. 
		\item $x\ll y$ if and only $y\gg x$.
		\item $\ll$ defines a strict partial order on $A$.
		\item $\ll$ induces a strict total order on $A/\sim$.
	\end{enumerate}
\end{proposition}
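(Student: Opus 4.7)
The plan is to derive every assertion from the formal properties of the comparison index already collected in \cref{propositionComparisonIndex}, supplemented by the geometric interpretation \cref{geometricInterpretation}. Since $\sim$, $\ll$ and $\gg$ are defined through $\CI(|x|,|y|;\leqslant)$, we may silently replace $x,y,z$ by their absolute values, so that all elements in play become positive and the hypotheses of \cref{propositionComparisonIndex}~(i)--(iv) are in force.

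For (i), reflexivity follows from $\CI(|x|,|x|)=1\in(0,\infty)$; symmetry from the reciprocal formula $\CI(|y|,|x|)=1/\CI(|x|,|y|)$, which sends $(0,\infty)$ to itself; and transitivity from the multiplicative rule $\CI(|x|,|z|)=\CI(|x|,|y|)\cdot\CI(|y|,|z|)$, where the exclusion $\{0,\infty\}$ does not occur because both factors lie in $(0,\infty)$. For (iii), $\CI(|x|,|y|)=\infty \Leftrightarrow \CI(|y|,|x|)=0$ is precisely the statement $x\ll y \Leftrightarrow y\gg x$. For (iv), irreflexivity comes from $\CI(|x|,|x|)=1\ne\infty$; asymmetry is forced because $x\ll y$ and $y\ll x$ would give $\CI(|x|,|y|)=\CI(|y|,|x|)=\infty$, contradicting the reciprocal formula; transitivity is the multiplicative rule applied with both factors equal to $\infty$, still avoiding the excluded case $\{0,\infty\}$.

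For (ii) I would unwind the two ``extreme'' parts of the geometric interpretation. The argument carried out inside the proof of \cref{geometricInterpretation}~(i) shows that $\CI(|x|,|y|)<\infty$ iff there exists $n\in\mathbb N$ with $|x|\leqslant n|y|$, and symmetrically $\CI(|x|,|y|)>0$ (equivalently $\CI(|y|,|x|)<\infty$) iff there exists $m\in\mathbb N$ with $|y|\leqslant m|x|$. Combining the two, $x\sim y$ is equivalent to the simultaneous existence of such $m,n$; translating back to $x,y$ only requires absorbing the signs of $x$ and $y$ into the signs of $m,n$, which is why the indices in the statement are allowed to range over $\mathbb Z$.

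For (v), I would first show that $\ll$ descends to $A/\sim$: if $x\sim x'$ and $y\sim y'$, then \cref{propositionComparisonIndex}~(iii) gives
\[\CI(|x'|,|y'|)=\CI(|x'|,|x|)\cdot\CI(|x|,|y|)\cdot\CI(|y|,|y'|),\]
where the outer factors are in $(0,\infty)$; hence $\CI(|x'|,|y'|)=\infty$ iff $\CI(|x|,|y|)=\infty$, so $x\ll y \Leftrightarrow x'\ll y'$. Trichotomy then follows from \cref{geometricInterpretation}~(i): for any two non-trivial $x,y$ the value $\CI(|x|,|y|)$ lies in $[0,\infty)\cup\{\infty\}$, so exactly one of $x\sim y$, $x\ll y$, $x\gg y$ holds; on $A/\sim$ this forces totality. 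The main obstacle is the bookkeeping in step~(ii), specifically extracting the two half-characterisations from \cref{geometricInterpretation}; everything else is a mechanical consequence of \cref{propositionComparisonIndex}.
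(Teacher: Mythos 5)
Your proposal is correct and follows essentially the same route as the paper: parts (i), (iii), (iv) and (v) are read off from \cref{propositionComparisonIndex} (reciprocity and the multiplicative rule, with the excluded case $\{0,\infty\}$ never arising), and part (ii) is extracted from \cref{geometricInterpretation}, exactly as the paper does. One small slip worth fixing: your two half-characterisations in (ii) are swapped --- from the computation in the proof of \cref{geometricInterpretation}~(i), the condition $\CI(|x|,|y|)<\infty$ corresponds to the existence of $m$ with $|y|\leqslant m|x|$ (failure of $y$ to dominate $x$), while $\CI(|x|,|y|)>0$ corresponds to the existence of $n$ with $|x|\leqslant n|y|$; since you only ever use the conjunction of the two, the conclusion of (ii) is unaffected.
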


\begin{proof}
	\begin{enumerate} [(i)]
		\item It follows from (i),(ii) and (iii) of \cref{propositionComparisonIndex}.
		\item  Let $\varepsilon,\eta\in \{-1,1\}$ be the unique elements such that $\varepsilon x,\eta y$ are positive. If $x\sim y$, then by \cref{geometricInterpretation}, we have that
			\[(m,n)\cdot  \mathbf{r}(\varepsilon x,\eta y)>0\Rightarrow m\varepsilon x+n \eta y>0_A.\]
			
			Thus, we pick $m,n$ such that $(m,-\eta)\cdot \mathbf{r}>0 $ and $(-\varepsilon,n)\cdot \mathbf{r}>0$. Then $x\leqslant ny$ and $y\leqslant mx$.
			
			The converse is straightforward.
		\item It follows from (ii) of \cref{propositionComparisonIndex}.
		\item It follows from (iii) of \cref{propositionComparisonIndex}.
		\item It follows from (i) and (iii).
	\end{enumerate}
\end{proof}

\section{$Q$-invariant orders on finitely generated free $\mathbb ZQ$-modules}

We begin with a study on orders on the group ring of free abelian group of finite rank, since the derived subgroup $M_2'$ is isomorphic to $\mathbb Z(x,y)$ (See \cite{bachmuth1965AutomorphismsFreeMetabelian}, \cite{Groves1986}). Note that since the order on $M_2'$ given by the restriction of an order on $M_2$ is compatible with not only the group operation but also the action of $Q:=M_2/M_2'$, we consider the following ordering structure on modules over the group ring of free abelian group of finite rank.

\begin{definition}
Let $Q$ be a free abelian group of finite rank and $M$ a finitely generated $\mathbb ZQ$-module. By a $Q$-invariant order $\leqslant$ on $M$ we mean an order satisfying:
\begin{enumerate}[(i)]
	\item if $m_1\leqslant m_2$ then $m_1+m_3\leqslant m_2+m_3$ for any $m_3\in M$;
	\item if $m_1\leqslant m_2$ then $q\cdot m_1\leqslant q\cdot m_2$ for any $q\in Q$.
\end{enumerate}	
If a $\mathbb ZQ$-module $M$ admits a $Q$-invariant order, we say that $M$ is \emph{$Q$-orderable}.
\end{definition}

Let $Q=\mathbb Z^n$ with basis $\{x_1,x_2,\dots,x_n\}$ and let $F_k$ be a free $\mathbb ZQ$-module of rank $k$ with basis $\{e_1,\dots,e_k\}$. $F_k$ is $Q$-orderable since $F_k$ embeds as the base group into the wreath product $\mathbb Z^k \wr \mathbb Z^n$, which is bi-orderable \cite[Theorem 2.1.1]{mura1977OrderableGroups}, and the inner automorphism action of $\mathbb Z^n$ on the base group is order-preserving.

We fix a $Q$-invariant order $\leqslant$ on $F_k$ and WLOG, we can assume that $e_1>e_2>\dots>e_n>0_{F_k}$. For each element $m\in F_k$ and $q,q'\in Q$ we define that $q \sim_m q'$ if $q \cdot m \sim q' \cdot m$ and $q\ll_m q'$ if $q \cdot m \ll q' \cdot m$. 

\begin{lemma}
\label{inducedOrderOnQuotients}
	Let $Q$ and $F_k$ as above and $m,m' \in F_k$.
	\begin{enumerate}[(i)]
		\item $\sim_m$ is an equivalence relation and $\ll_m$ is a partial order on $Q$.
		\item If $q_1 \sim_m q_2$, then $qq_1 \sim_m qq_2$ for all $q\in Q$. In particular, the set $Q_m=\{q\in Q \mid 1_Q \sim_m q\}$ is a subgroup of $Q$ and $\ll_m$ induces a bi-order on $Q/\sim_m$.
		\item If $m \sim m'$, then $Q_m=Q_{m'}$.
	\end{enumerate}
\end{lemma}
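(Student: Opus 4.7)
The plan is to deduce the three items by transporting, via the orbit map $\varphi_m\colon Q\to F_k$, $q\mapsto q\cdot m$, the results about $\sim$ and $\ll$ on an ordered abelian group established in \cref{inducedRelationPartialOrder}, together with the key observation (\cref{propositionComparisonIndex}(iv)) that the $Q$-action on $F_k$ is by order-preserving automorphisms and therefore leaves the comparison index $\CI$ invariant.

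For (i), I would simply observe that $\sim_m$ and $\ll_m$ are, by definition, the pullbacks of $\sim$ and $\ll$ along $\varphi_m$. Since $\sim$ on $F_k$ is an equivalence relation (\cref{inducedRelationPartialOrder}(i)) and $\ll$ is a strict partial order (\cref{inducedRelationPartialOrder}(iv)), reflexivity/symmetry/transitivity and irreflexivity/asymmetry/transitivity, respectively, are inherited by the pullback.

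For (ii), I would first note that since every $q\in Q$ acts on $F_k$ by an order-preserving automorphism, $|q\cdot x|=q\cdot|x|$ for all $x\in F_k$, so \cref{propositionComparisonIndex}(iv) yields
\[\CI(|q_1\cdot m|,|q_2\cdot m|)\;=\;\CI(|qq_1\cdot m|,|qq_2\cdot m|)\]
for all $q,q_1,q_2\in Q$. Hence $q_1\sim_m q_2$ if and only if $qq_1\sim_m qq_2$, i.e.\ $\sim_m$ is translation-invariant. Then $Q_m$ is the preimage of the trivial class under the induced quotient map $Q\to Q/{\sim_m}$ and is therefore a subgroup. The same identity, applied to $\ll$ instead of $\sim$, shows that $\ll_m$ is translation-invariant and descends to $Q/{\sim_m}$; totality on the quotient is inherited from \cref{inducedRelationPartialOrder}(v), and bi-invariance follows because $Q$ is abelian.

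For (iii), assume $m\sim m'$ and $q\in Q_m$, so that $m\sim q\cdot m$. Applying \cref{propositionComparisonIndex}(iv) with the automorphism $q$ to the pair $(m,m')$ gives $q\cdot m\sim q\cdot m'$, and chaining with transitivity of $\sim$ produces
\[m'\;\sim\;m\;\sim\;q\cdot m\;\sim\;q\cdot m',\]
whence $q\in Q_{m'}$; swapping $m$ and $m'$ yields the reverse inclusion. The whole argument is essentially bookkeeping — the only conceptual input is that absolute value commutes with the $Q$-action (because $Q$ acts by order-preserving isomorphisms), which is exactly what feeds the hypothesis of \cref{propositionComparisonIndex}(iv), so no real obstacle arises.
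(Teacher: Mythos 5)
Your proof is correct and follows essentially the same route as the paper's: both reduce everything to the fact that $Q$ acts by order-preserving isomorphisms, the paper deriving translation-invariance of $\sim_m$ from the inequality characterization in \cref{inducedRelationPartialOrder}(ii) plus $Q$-invariance of $\leqslant$, and you deriving it from the $\CI$-invariance of \cref{propositionComparisonIndex}(iv), which amounts to the same thing. Your chain of equivalences in (iii) is the one the paper uses verbatim.
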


\begin{proof}
	\begin{enumerate}[(i)]
		\item It follows from \cref{inducedRelationPartialOrder}.
		\item By \cref{inducedRelationPartialOrder} (ii) we have that there exist $k,k'\in \mathbb Z_{>0}$ such that 
			\[q_1 \cdot m\leqslant k q_2\cdot m, \text{ and }q_2 \cdot m \leqslant k' q_1 \cdot m.\]
			Since $\leqslant$ is $Q$-invariant, we have
			\[qq_1 \cdot m\leqslant k qq_2\cdot m, \text{ and }qq_2 \cdot m \leqslant k' qq_1 \cdot m.\]
			Therefore, if $q_1 \sim_m q_2$, then $qq_1 \sim_m qq_2$ for all $q\in Q$. 
			Thus, $Q_m$ is a subgroup since $1_Q \sim_m q$ implies $q^{-1} \sim_m 1_Q$ and 
			\[1_Q \sim_m q_1\sim_m q_2 \Rightarrow q_1 \sim_m q_1^2 \sim_m q_1 q_2.\] 
			Consequently, $\ll_m$ induces a bi-order on $Q/\sim_m$.	
		\item If $m \sim m'$, then for $q\in Q_m$ and $q'\in Q_{m'}$, we have
			\[m \sim q\cdot m \sim q\cdot m' \sim q' \cdot m' \sim m'.\]
			The second equivalence comes from the fact that $\leqslant$ is $Q$-invariant. Therefore, $q\in Q_{m'}$	and $q'\in Q_m$.
	\end{enumerate}
\end{proof}

%

Let $(A,\leqslant)$ be an ordered abelian group. For a subset $S$ of $A$, we define: 
\[\mathrm{Max}(S)=\{s\in S \mid s\gg t \text{ or } s \sim t, \forall t\in S\}.\]

\begin{proposition}
\label{homomorphismToReals}
	Let $Q$ and $F_k$ as above. The basis $\{e_1,e_2,\dots, e_k\}$ of $F_k$ satisfies that $e_1>e_2>\dots>e_n>0_{F_k}$. Suppose that $Q_{e_1}=Q$. Then the homomorphism $\varphi:F_k\to \mathbb R$ given on the abelian generators $\{q \cdot e_i \mid q\in Q, i=1,2,\dots,k\}$ of $F_k$ by
	\[\varphi(q \cdot e_i)=\CI(e_1,q \cdot e_i)\]
	satisfies that $\varphi^{-1}((0,\infty))$ lies in the positive cone $P_{\leqslant}$
\end{proposition}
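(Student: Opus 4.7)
The plan is to exhibit $\varphi$ as the inner product with the normal vector of the hyperplane defining $\leqslant$, in the sense of \cref{dotProduct}, and then read off that $\varphi(m)>0$ forces $m>0_{F_k}$.

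First I would check that $\varphi$ is a well-defined real-valued homomorphism. Since $Q$ is free abelian, $\mathbb{Z}Q$ has $\{q:q\in Q\}$ as a $\mathbb{Z}$-basis, so $\{q\cdot e_i:q\in Q,\,1\leqslant i\leqslant k\}$ is a $\mathbb{Z}$-basis of $F_k$; it therefore suffices to verify that the values $\CI(e_1,q\cdot e_i)$ are all finite. The hypothesis $Q_{e_1}=Q$ unpacks to $e_1\sim q\cdot e_1$ for every $q\in Q$, i.e.\ $\CI(e_1,q\cdot e_1)\in(0,\infty)$. Combining \cref{propositionComparisonIndex}(iv) (so that $\CI(q\cdot e_1,q\cdot e_i)=\CI(e_1,e_i)$) with the multiplicative law (iii) yields
\[\CI(e_1,q\cdot e_i)=\CI(e_1,q\cdot e_1)\cdot \CI(e_1,e_i).\]
Since $0_{F_k}<e_i\leqslant e_1$, the factor $\CI(e_1,e_i)$ lies in $[0,\infty)$, so the product is finite and $\varphi$ extends uniquely to a group homomorphism $F_k\to\mathbb{R}$.

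Next I would take an arbitrary $m\in\varphi^{-1}((0,\infty))$ and write $m=\sum_{j=1}^{s}n_jy_j$ with $y_j=q_j\cdot e_{i_j}$ pairwise distinct basis elements and $n_j\in\mathbb{Z}\setminus\{0\}$. Each $y_j$ is positive because the $Q$-action is order-preserving and $e_{i_j}>0_{F_k}$. If $e_1\in\{y_j\}$, say $y_1=e_1$, I set $x_j:=y_j$ for $1\leqslant j\leqslant s$; otherwise I set $x_1:=e_1$ and $x_{j+1}:=y_j$ for $1\leqslant j\leqslant s$, so that $m$ has coefficient $0$ at $x_1$. In both cases $\{x_1,\ldots,x_t\}$ lies in the $\mathbb{Z}$-basis of $F_k$ and hence generates a free abelian group of rank $t$, meeting the rank hypothesis of \cref{dotProduct}.

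Finally I would apply \cref{dotProduct} with these $x_1,\ldots,x_t$ and the coefficient vector of $m$. By \cref{propositionComparisonIndex}(i), $\varphi(e_1)=\CI(e_1,e_1)=1$, while $\varphi(x_j)=\CI(e_1,x_j)=:r_j$ by definition; hence the dot product of the coefficient vector of $m$ with $(1,r_2,\ldots,r_t)$ equals precisely $\varphi(m)$. When $\varphi(m)>0$, \cref{dotProduct} then delivers $m>0_{F_k}$, so $m\in P_\leqslant$. The main obstacle I anticipate is the first step: the hypothesis $Q_{e_1}=Q$ only provides comparability with $e_1$ along the $Q$-orbit of $e_1$, so comparability of an arbitrary $q\cdot e_i$ with $e_1$ must be routed through this orbit, which is exactly what the multiplicative and $Q$-invariance properties of $\CI$ accomplish.
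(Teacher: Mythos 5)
Your proof is correct and follows essentially the same route as the paper: both arguments identify $\varphi(m)$ with the dot product $(a_1,\dots,a_t)\cdot(1,r_2,\dots,r_t)$ of \cref{dotProduct} (after checking via \cref{propositionComparisonIndex} and the hypothesis $Q_{e_1}=Q$ that all the comparison indices involved are finite) and then invoke that theorem. The only difference is organisational: the paper splits $m$ into the part supported on $Q\cdot\mathrm{Max}\{e_1,\dots,e_k\}$ and a lexicographically negligible remainder before applying \cref{dotProduct}, whereas you feed the entire support, augmented by $e_1$, into \cref{dotProduct} at once, exploiting that the theorem tolerates $r_i=0$ -- a slight streamlining that makes the handling of the remainder explicit.
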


\begin{proof}
	Let $m\in F_k$. Let $B_1=\mathrm{Max}\{e_1,e_2,\dots,e_n\}$. We first notice that 
	\[\CI(e_1,q \cdot e_i)=0, \forall q\in Q, e_i\notin B_1.\]
	Since $e_1\in B_1$ and $e_i\notin B_1$, thus $e_1\gg e_i$, which implies that
	\[e_1\sim q \cdot e_1\gg q \cdot e_i.\]
	
	We want to show that if $\varphi(m)>0$, then $m>0_{F_k}$. Denote by $\supp m$ the support of $m$, where $\supp m$ consists of all abelian generators from $\{q\cdot e_i\mid q\in Q, i=1,2,\dots,k\}$ with nonzero coefficients.
	
	To simplify notation, the element $m$ can be uniquely written as
	\[m=a_1 m_1+a_2 m_2 +\dots+a_l m_l+m', a_i\in \mathbb Z, m_i\in \supp m\cap (Q\cdot B_1),\]
	and $m'$ lies in the abelian subgroup generated in $F_k$ by $\{\supp m\setminus (Q\cdot B_1)\}.$ Note that $\varphi(m')=0$. 
	Thus 
	\[\varphi(m)=a_1\CI(e_1,m_1)+a_2\CI(e_1,m_2)+\dots+a_l\CI(e_l,m_l).\]
	Consider the abelian subgroup $(A,\leqslant)$ of $(F_k,\leqslant)$ generated by $\{m_1,m_2,\dots,m_l\}.$ Notice that  
	\[\varphi(m)=(a_1,a_2,\dots,a_l)\cdot \CI(e_1,m_1)(1,r_2,r_3,\dots,r_{l}),\]
	where $r_i=\CI(m_1,m_i).$ Since $e_1\sim m_1, $ and hence $\CI(e_1,m_1)>0$, then by \cref{dotProduct} we have
	\[\varphi(m)>0 \Rightarrow (a_1,a_2,\dots,a_l)\cdot (1,r_2,r_3,\dots,r_{l}) \Rightarrow m>0_A=0_{F_k}.\]
\end{proof}

\section{The case where $Q$ is cyclic and $F_k$ has rank 1}

Let $(A,\leqslant)$ be an orderable abelian group and let $Q$ be a group acting on $A$ by order-preserving isomorphisms. Recall that a subgroup $H$ of $A$ is \emph{convex} if for any $g\in A$ satisfying that there exists $h_1,h_2\in H$ such that $h_1\leqslant g \leqslant h_2$, then $g\in H$. A subgroup $H$ is \emph{$Q$-convex} if $H$ is a convex subgroup and is invariant under the action of $Q$. Note that if $H_1, H_2$ are convex subgroups of $A$, then either $H_1\subset H_2$ or $H_2\subset H_1$.

For the case $Q$ is cyclic and $F_k$ has rank 1, we can identify $F_1$ with the Laurent polynomials $\mathbb Z(x)$, on which the cyclic group $\langle x \rangle$ acts.

\begin{lemma}
\label{singleVariableLemma}
	Let $\prec$ be an $\langle x \rangle$-invariant order on $\mathbb Z(x)$. Let $\varepsilon\in \{-1,1\}$ be the unique element such that $\varepsilon \succ 0$ and let $r=\CI(\varepsilon,\varepsilon x;\prec)$. Moreover, for $f(x) \in \mathbb Z(x)$ let $a_-, a_+$ be the coefficients of the terms of the lowest degree and highest degree of $f(x)$ respectively. We have:
	\begin{enumerate}[(i)]
		\item when $r=0$, $f(x)\succ 0$ if and only if $\varepsilon a_->0$,
		\item when $r=\infty$, then $f(x)\succ 0$ if and only if $\varepsilon a_+>0$,
		\item when $r\in (0,\infty)$, then $f(x)\succ 0$ if $\varepsilon f(r)>0$. 
	\end{enumerate}
\end{lemma}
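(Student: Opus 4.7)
The three cases are quite different in flavor. Case (iii) is essentially a direct application of \cref{homomorphismToReals} to the module $F_1=\mathbb Z(x)$ with $e_1=\varepsilon$, while cases (i) and (ii) require a direct ``lexicographic'' argument since the hypothesis $Q_{e_1}=Q$ of that proposition fails in those two cases. I will treat (iii) first, then (i), and indicate that (ii) is its mirror image.

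For case (iii), the assumption $r\in(0,\infty)$ ensures $Q_{e_1}=Q$, so \cref{homomorphismToReals} applies with $k=1$. Using $Q$-invariance of $\CI$ (\cref{propositionComparisonIndex}(iv)) I obtain $\CI(\varepsilon x^i,\varepsilon x^{i+1})=r$ for every $i$, and telescoping via \cref{propositionComparisonIndex}(iii) (whose hypothesis holds since $r\in(0,\infty)$) gives $\CI(\varepsilon,\varepsilon x^k)=r^k$ for all $k\in\mathbb Z$. Writing $x^k=\varepsilon\cdot(\varepsilon x^k)$ as an integer multiple in the abelian group, the homomorphism $\varphi$ furnished by \cref{homomorphismToReals} satisfies $\varphi(x^k)=\varepsilon r^k$ and therefore $\varphi(f)=\varepsilon f(r)$ for every $f\in\mathbb Z(x)$. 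So $\varepsilon f(r)>0$ forces $\varphi(f)>0$, which gives $f(x)\succ 0$ by \cref{homomorphismToReals}.

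For case (i), $r=0$ says $\varepsilon\gg\varepsilon x$, i.e.\ $N\varepsilon x\prec\varepsilon$ for every positive integer $N$. $Q$-invariance propagates this to $\varepsilon x^i\gg\varepsilon x^{i+1}$ for every $i$, and telescoping \cref{propositionComparisonIndex}(iii) (all $\CI$'s are $0$, so the forbidden pair $\{0,\infty\}$ never appears) yields $\varepsilon x^i\gg\varepsilon x^j$ whenever $i<j$. I would then split $f=a_-x^{i_-}+f'$ with $f'=\sum_{i>i_-}a_ix^i$ and bound $|f'|$ by the triangle inequality: since $|x^i|=\varepsilon x^i\preceq\varepsilon x^{i_-+1}$ for each $i>i_-$, I obtain $|f'|\preceq C\,\varepsilon x^{i_-+1}$ with $C=\sum_{i>i_-}|a_i|$. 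The relation $\varepsilon x^{i_-}\gg\varepsilon x^{i_-+1}$ then allows me to pick an integer $N$ with $|a_-|\varepsilon x^{i_-}\succ N\varepsilon x^{i_-+1}\succ|f'|$, so $f$ has the same sign as $a_-x^{i_-}=(\varepsilon a_-)\,\varepsilon x^{i_-}$, which is positive precisely when $\varepsilon a_->0$.

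Case (ii) is the mirror image: $r=\infty$ unwinds via \cref{propositionComparisonIndex}(ii),(iv) to $\varepsilon x^j\gg\varepsilon x^i$ whenever $j>i$, and the same triangle-inequality argument with the roles of lowest and highest degree swapped gives the claim. The only point requiring real care, rather than any deep obstacle, is the passage from the qualitative relation $\gg$ to a single concrete inequality controlling the tail $f'$; this relies on combining $Q$-invariance, the multiplicativity of $\CI$ in \cref{propositionComparisonIndex}(iii), and the triangle inequality in the ordered abelian group $(\mathbb Z(x),\prec)$.
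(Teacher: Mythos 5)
Your proof is correct and follows essentially the same route as the paper: for $r\in(0,\infty)$ the paper applies \cref{dotProduct} directly to the tuple $(r^s,\dots,r^t)$, which is exactly what your invocation of \cref{homomorphismToReals} unwinds to, and for $r\in\{0,\infty\}$ the paper runs the same leading-term-dominates argument, phrased as an induction showing $\varepsilon x^{k}\succ n\,r(x)$ for all $n$ rather than via the triangle inequality. No gaps.
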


\begin{proof}
	For the case $r=0$, then $x\ll 1$. Thus, we can write any element $f(x)$ as 
	\[f(x)=a_- x^{k}+r(x), \]
	where $k$ is the lowest degree of $f(x)$ and $r(x)$ is the rest of $f(x)$. We claim that $a_- x^{k}\gg r(x)$. Since $x\ll 1$, we have
	\[\varepsilon x^{i}\succ n x^{i+1}, \forall n\in \mathbb Z.\]
	Thus, inductively we have 
	\[\varepsilon x^k \succ n r(x), \forall n\in \mathbb Z\]
	since the lowest degree of $r(x)$ is greater than $i$. In particular, $x^i\gg r(x)$.	Notice that $a_- x^{k}\succ 0$ if and only if $\varepsilon a_->0$. If $\varepsilon a_->0$, then $a_- x^{k}\succ nr(x)$ for all $n\in \mathbb Z$. Thus, $a_- x^{k}\succ -r(x)$, and hence $f(x) \succ 0$. On the other hand, if $f(x) \succ 0$, then $a_- x^k\succ -r(x)$. Because $a_- x^{k} \gg r(x)$, it forces $a_- x^k \succ n r(x)$ for all $n\in \mathbb Z$. In particular, $a_- x^k \succ 0$. Thus, $\varepsilon a_->0$. This completes the proof of the case $r=0$. The case where $r=\infty$ is similar.
	
	Now we assume $r\in (0,\infty)$. WLOG, we also assume that $\varepsilon=1$. Then $\CI(1,x^n)=r^n$ for all $n\in \mathbb Z$. Suppose $f(x)=\sum_{i=s}^t a_i x^i$ where $a_s,a_t\neq 0$. Then 
	\[f(r)=(a_s,a_{s+1},\dots,a_t)\cdot (r^{s},r^{s+1},\dots,r^t)=(a_s,a_{s+1},\dots,a_t)\cdot \CI(1,x^s)(1,r,\dots,r^{t-s}).\]
	Note that $\CI(x^s,x^i)=r^{i-s}$ for $i=s+1,s+2,\dots,t$. Then by \cref{dotProduct} we have
	\[f(r)>0 \Rightarrow f(x)\succ 0.\]
\end{proof}

\begin{lemma}
\label{maximalConvexSubgroup}
	Let $\prec$ be an $\langle x \rangle$-invariant order on $\mathbb Z(x)$. Let $H$ be the maximal proper $\langle x \rangle$-convex subgroup of $\mathbb Z(x)$. 
	\begin{enumerate}[(i)]
		\item $H$ is trivial if and only if $r$ is either a positive transcendental number, $0$, or $\infty$.
		\item If $r$ is algebraic and $p(x)$ is the primitive irreducible polynomial of $r$ in $\mathbb Z[x]$, then $H=p(x)\mathbb Z(x)$. In particular, $H$ is isomorphic to $\mathbb Z(x)$.
	\end{enumerate}
\end{lemma}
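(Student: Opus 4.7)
The plan is a case analysis on $r$, using \cref{singleVariableLemma} to describe the archimedean structure of $(\mathbb Z(x),\prec)$ and thereby pin down the $\langle x\rangle$-convex subgroups. When $r\in\{0,\infty\}$, \cref{singleVariableLemma} shows the sign of any nonzero $f$ is read off one extreme coefficient, so the archimedean classes of $(\mathbb Z(x),\prec)$ are parametrised by $\mathbb Z$ via the lowest (resp.\ highest) exponent. Multiplication by $x$ shifts this parameter by $\pm 1$, so any nontrivial $\langle x\rangle$-invariant convex subgroup must contain elements of arbitrarily high archimedean class, and therefore, by convexity, every element of $\mathbb Z(x)$; hence $H=\{0\}$. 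When $r\in(0,\infty)$ is transcendental, the evaluation $\varphi\colon f\mapsto\varepsilon f(r)$ is an injective additive map into $\mathbb R$ (injectivity from transcendence), and \cref{singleVariableLemma}(iii) together with trichotomy makes $\varphi$ order-preserving; therefore $(\mathbb Z(x),\prec)$ is archimedean and again $H=\{0\}$.

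Now suppose $r$ is algebraic with primitive irreducible polynomial $p(x)\in\mathbb Z[x]$. The subgroup $p(x)\mathbb Z(x)$ is an $\langle x\rangle$-invariant ideal, and it is proper because the only units of $\mathbb Z[x,x^{-1}]$ are $\pm x^i$, none of which vanishes at $r>0$. For convexity, take any nonzero $f\in p(x)\mathbb Z(x)$ and $g\in\mathbb Z(x)\setminus p(x)\mathbb Z(x)$: since $f(r)=0$ and $g(r)\neq 0$, applying \cref{singleVariableLemma}(iii) to $|g|-n|f|$ yields $|g|\succ n|f|$ for every $n$, so $|g|\gg|f|$. Hence no element outside $p(x)\mathbb Z(x)$ can be sandwiched between two elements inside it, which is exactly the convexity condition.

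For maximality, suppose $H'\supsetneq p(x)\mathbb Z(x)$ is a proper $\langle x\rangle$-convex subgroup and pick $v\in H'\setminus p(x)\mathbb Z(x)$. Then $v(r)\neq 0$ and $1(r)=1$, and the same comparison gives $1\prec n|v|$ for $n$ large; convexity of $H'$ then forces $1\in H'$, and $\langle x\rangle$-invariance gives every $x^i\in H'$, so $H'=\mathbb Z(x)$, a contradiction. Hence $H=p(x)\mathbb Z(x)$, and $f\mapsto p(x)f$ produces the claimed isomorphism with $\mathbb Z(x)$. The main subtle point is that \cref{singleVariableLemma}(iii) only provides the one-way implication $f(r)>0\Rightarrow f\succ 0$, and so gives no direct sign information for elements of $p(x)\mathbb Z(x)$; these must be controlled indirectly via the comparison $|g|-n|f|$ with elements whose evaluation at $r$ is nonzero.
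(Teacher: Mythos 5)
Your argument is correct and follows essentially the same route as the paper's: for $r\in\{0,\infty\}$ or $r$ transcendental you show any nontrivial convex $\langle x\rangle$-invariant subgroup absorbs everything (via the shift of archimedean classes, resp.\ the order-embedding $f\mapsto\varepsilon f(r)$ into $\mathbb R$), and for $r$ algebraic you identify $H$ with the evaluation kernel $\{f\mid f(r)=0\}=p(x)\mathbb Z(x)$ and prove convexity and maximality by comparing $|g|-n|f|$ at $r$, exactly as the paper does. Your closing remark about the one-way implication in \cref{singleVariableLemma}(iii) is a point the paper glosses over, but it does not change the substance of the proof.
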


\begin{proof}
	\begin{enumerate}[(i)]
		\item Suppose $r=0,$ or $\infty$ and $H$ is not trivial. We claim that, in both cases, $1 \in H$. Since $H$ is non-trivial, there exists $f(x)\neq 0\in H$. WLOG we can assume $f(x)\succ 0$ and $1\succ 0$. There exists $x^k$ such that $x^kf(x)\succ 1$ by \cref{singleVariableLemma} (i), (ii). The claim is proved.
			
			Thus, $x^n\in H$ for all $n\in \mathbb Z$ because $H$ is $\langle x \rangle$-invariant. Then by \cref{singleVariableLemma} again, $H=\mathbb Z(x)$ since for any $g(x)\in \mathbb Z$ there exists $x^i, x^j$ such that $-x^{i}\prec g(x)\prec x^j$. This leads to a contradiction. Thus, $H$ is trivial if $r=0$ or $\infty$. 
			
			Suppose $r$ is transcendental and $H$ is not trivial. Let $f(x)\in H$ be a non-trivial element in $H$. WLOG, we assume that $1\succ 0$. Since $r$ is transcendental, $g(r)\neq 0$ for every element in $\mathbb Z(x)$. Then for every $g(x)\in \mathbb Z(x)$ there exists $k\in \mathbb Z$ such that 
			\[-kf(r)<g(r)<kf(r).\]
			Therefore, $g(x)\in H$. Hence, we have a contradiction.
			
			Conversely, if $r$ is algebraic, we claim that $I=\{f(x) \mid f(r)=0\}$ is a non-trivial proper $\langle x \rangle$-convex subgroup. WLOG, we assume that $1\succ 0$. Since $r$ is algebraic, $I$ is non-trivial. Suppose $g(x)\in \mathbb Z(x)$ such that $g(r)\notin I$. If $g(r)> 0$, then $g(x)-f(x)\succ 0$ for all $f(x) \in I$ by \cref{singleVariableLemma} (iii). Thus, $g(x)\succ I$. Similarly, if $g(r)<0$, then $g(x)\prec I$. Thus, $I$ is convex. This completes the proof of (i).
		\item By the discussion above, $I=p(x)\mathbb Z(x)\subset H$. Suppose there exists $f(x)\in H\setminus I$. WLOG, we assume that $1\succ 0$. Then $f(r)\neq 0$ and for any $g(x)\in \mathbb Z(x)$ there exists $k\in \mathbb Z$ such that 
			\[-kf(r)<g(r)<kf(r).\]
			Thus, $g(x)\in H$, which implies $H=\mathbb Z(x)$. A contradiction.
	\end{enumerate}
\end{proof}

Combining the above lemmas, we have a classification of all $\langle x \rangle$-invariant orders on $\mathbb Z(x)$.

\begin{theorem}
\label{classificationSingleVariable}
	Let $\prec$ be an $\langle x \rangle$-invariant order on $\mathbb Z(x)$. We inductively define a sequence $(r_1,r_2,\dots,)$ in $\mathbb R_{\geqslant 0}\cup \{\infty\}$ and a sequence $(\varepsilon_1,\varepsilon_2,\dots)$ in $\{-1,1\}$ as follows. 
	
	Let $p_1(x)=1$ and suppose we have already defined $(r_1,\dots,r_{s-1})$, $(\varepsilon_1,\dots,\varepsilon_{s-1})$ and $p_1,\dots,p_s$. Let $\varepsilon_s$ be the unique element in $\{-1,1\}$ such that $\varepsilon_s\prod_{i=1}^s p_i(x)\succ 0$. Then let 
			\[r_s=\CI(\varepsilon_s\prod_{i=1}^s p_i(x),\varepsilon_s x\prod_{i=1}^s p_i(x)),\]
		where $p_{s+1}(x)$ is the primitive irreducible polynomial of $r_s$ in $\mathbb Z[x]$.
		
	Then the $\langle x \rangle$-invariant order $\prec$ is codified by $(r_1,r_2,\dots)$ and $(\varepsilon_1,\varepsilon_2,\dots)$ as follows.
	\begin{enumerate}[(i)]
		\item let $H_0=\mathbb Z(x)$ and $H_i=p_i(x)H_{i-1}$ for $i=1,2,\dots$. Then we have a sequence of nested subgroups 
			\[\dots \subset H_n \subset \dots \subset H_2 \subset H_1 \subset H_0=\mathbb Z(x),\]
			where $H_i$ is the maximal proper $\langle x \rangle$-convex subgroup of $H_{i-1}$ for $i=1,2,\dots$. The nested sequence is finite if and only if there exists $r_k$ that is either transcendental, 0 or $\infty$.
				
		\item For $f(x)\in H_{s}\setminus H_{s+1}$ let $a_-, a_+$ be the coefficient of the term of the lowest degree and highest degree of $f(x)$ respectively. Then $f(x)\succ 0$ if and only if 
			\[\begin{cases}
				\varepsilon_s \left(\prod_{i=1}^s\frac{1}{p_i(r_s)}\right) f(r_s)>0 & \text{ if $r_s\neq 0,\infty$},\\
				\varepsilon_s a_->0 & \text{ if $r_s=0$},\\
				\varepsilon_s a_+>0 & \text{ if $r_s=\infty$}.
			\end{cases}\]
	\end{enumerate}
\end{theorem}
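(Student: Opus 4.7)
The plan is to proceed by induction on $s$, reducing each layer back to a single application of Lemmas~\ref{maximalConvexSubgroup} and~\ref{singleVariableLemma} via a pullback. Since $q_s(x):=\prod_{i=1}^s p_i(x)$ is central in $\mathbb Z(x)=\mathbb Z[\langle x\rangle]$, multiplication by $q_s$ is a $\langle x\rangle$-module isomorphism $\varphi_s:\mathbb Z(x)\to H_s$. The order $\prec_s$ on $\mathbb Z(x)$ defined by $g\succ_s 0\iff q_s g\succ 0$ is therefore again $\langle x\rangle$-invariant; by construction $\varepsilon_s\succ_s 0$, and Proposition~\ref{propositionComparisonIndex}(iv) gives $\CI(\varepsilon_s,\varepsilon_s x;\prec_s)=\CI(\varepsilon_s q_s,\varepsilon_s x q_s;\prec)=r_s$.

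For part~(i), I argue inductively. The case $s=0$ is exactly Lemma~\ref{maximalConvexSubgroup} applied to $\prec$, and the step $H_0\to H_1$ is vacuous because $p_1=1$. For the passage from $H_s$ to $H_{s+1}$ with $s\geqslant 1$, apply Lemma~\ref{maximalConvexSubgroup} to $(\mathbb Z(x),\prec_s)$: its maximal proper $\langle x\rangle$-convex subgroup is trivial whenever $r_s\in\{0,\infty\}$ or $r_s$ is transcendental (so the chain terminates at $H_s$), and otherwise equals $p_{s+1}(x)\mathbb Z(x)$, where $p_{s+1}$ is the primitive irreducible polynomial of $r_s$. Pushing forward by $\varphi_s$ yields $H_{s+1}=p_{s+1}(x)H_s$. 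Since convex subgroups of an ordered abelian group are totally ordered by inclusion, convexity of $H_{s+1}$ inside $H_s$ automatically upgrades to convexity inside the ambient $(\mathbb Z(x),\prec)$, which completes the induction and the structural part of~(i).

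For part~(ii), write $g=\varphi_s^{-1}(f)=f/q_s$. Since $f\in H_s\setminus H_{s+1}$ we have $g\in\mathbb Z(x)\setminus p_{s+1}(x)\mathbb Z(x)$, and by construction $f\succ 0\iff g\succ_s 0$. Apply Lemma~\ref{singleVariableLemma} to $g$ in the order $\prec_s$, which has parameter $r_s$ and positive direction $\varepsilon_s$. In the algebraic case $r_s\in(0,\infty)$ the lemma gives $\varepsilon_s g(r_s)>0\Rightarrow g\succ_s 0$; converting via $g(r_s)=f(r_s)/q_s(r_s)=\bigl(\prod_{i=1}^s p_i(r_s)^{-1}\bigr) f(r_s)$ (well defined because only $p_{s+1}$ vanishes at $r_s$) yields the stated inequality. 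The converse direction follows from $g(r_s)\neq 0$: the opposite strict sign would give $-g\succ_s 0$, hence $f\prec 0$. In the boundary cases $r_s=0$ and $r_s=\infty$ the same translation through $\varphi_s$ reduces the claim to parts~(i) and~(ii) of Lemma~\ref{singleVariableLemma}.

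The main subtlety I expect is this last translation, because the coefficients $a_\pm$ in the statement are those of $f$, not those of $g$. Since $f=q_s g$, the lowest (respectively highest) degree coefficient of $f$ is the product of the corresponding extremal coefficients of $q_s$ and $g$, so the equivalence $\varepsilon_s a_\pm(f)>0\iff\varepsilon_s a_\pm(g)>0$ relies on fixing a sign convention for the primitive irreducible polynomials $p_i$ so that the relevant extremal coefficient of $q_s$ is positive. Apart from this bookkeeping the whole argument is a clean pushforward of the two preceding lemmas, with the tower of convex subgroups produced by iterating the trichotomy (trivial / transcendental / algebraic) supplied by Lemma~\ref{maximalConvexSubgroup}.
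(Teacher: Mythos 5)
Your argument is essentially the paper's own: the paper likewise transports the order along the isomorphism $H_s\to\mathbb Z(x)$, $f\mapsto\bigl(\prod_{i=1}^s p_i(x)\bigr)^{-1}f$, applies Lemmas~\ref{singleVariableLemma} and~\ref{maximalConvexSubgroup} to the induced order, identifies its comparison index with $r_s$, and pulls the maximal proper $\langle x\rangle$-convex subgroup back to get $H_{s+1}=p_{s+1}(x)H_s$, concluding that the rest "follows immediately." The extra details you supply — upgrading the one-directional Lemma~\ref{singleVariableLemma}(iii) to an equivalence via $g(r_s)\neq 0$, and the sign normalization of the $p_i$ needed so that the criterion in the boundary cases $r_s\in\{0,\infty\}$ can be stated in terms of $a_\pm(f)$ rather than $a_\pm(f/q_s)$ — are correct points that the paper leaves implicit, the latter being a genuine (if minor) convention the theorem statement silently assumes.
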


\begin{proof}
	The base case is handled by \cref{singleVariableLemma} and \cref{maximalConvexSubgroup}.
	
	We define an isomorphism $\varphi: H_{n}\to \mathbb Z(x)$ such that 
	\[f(x) \mapsto \left(\prod_{i=1}^n\frac{1}{p_i(x)}\right) f(x)\]
	We have an induced order $\prec_{\varphi}$ on $\mathbb Z(x).$ Let $r=\CI(\varepsilon, \varepsilon x; \prec_{\varphi})$, where $\varepsilon\in \{-1,1\}$ is the unique element such that $\varepsilon\prec_{\varphi} 0$. Thus, the maximal proper $\langle x \rangle$-convex subgroup $H$ of $(\mathbb Z(x),\prec_{\varphi})$ is either trivial for the case where $r$ is transcendental, $0$ and $\infty$, or $H=p(x)\mathbb Z(x)$ where $p(x)$ is the primitive irreducible polynomial of $r$ in $\mathbb Z[x]$. Note that 
	\[r=\CI(\varepsilon, \varepsilon x; \prec_{\varphi})=\CI(\varepsilon_n\prod_{i=1}^n p_i(x),\varepsilon_n x\prod_{i=1}^n p_i(x); \prec)=r_n, \text{ and } \varepsilon=\varepsilon_n.\]
 	Moreover, $H_{n+1}=\varphi^{-1}(H)$. Therefore, $H_{n+1}$ is trivial if and only if $r_n$ is transcendental, $0$ or $\infty$. And $H_{n+1}=p(x)H_n=p_n(x)H_n$. The rest of the theorem follows immediately.
\end{proof}

Consider a $Q$-orderable $\mathbb ZQ$-module $A$. Let $Q\mathcal{O}(A)$ be the space of $Q$-invariant orders on $A$. Note that since $\mathcal O(A)$ is a compact Hausdorff space and $Q$ acts on $\mathcal O(A)$ by homeomorphism, then $Q\mathcal{O}(A)$ is a closed subspace of $\mathcal O(A)$. Hence, $Q\mathcal{O}(A)$ is also a closed subspace of the Cantor set.

\begin{corollary}
\label{singleCantorSet}
	The space of $\langle x \rangle$-invariant orders on $\mathbb Z(x)$ is a Cantor set.
\end{corollary}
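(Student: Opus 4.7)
The plan is to verify the standard topological characterization of the Cantor set: a non-empty, compact, metrizable space that is perfect and totally disconnected. Four of these five properties come for free: by the discussion preceding the statement, $Q\mathcal{O}(\mathbb Z(x))$ is a closed subspace of $\mathcal O(\mathbb Z(x))$, which in turn sits as a closed subspace of the Cantor set $\mathcal{LO}(\mathbb Z(x))$, so it inherits compactness, Hausdorffness, metrizability, and total disconnectedness. Non-emptiness follows from the existence of an $\langle x \rangle$-invariant order on $\mathbb Z(x)$ coming from the embedding into the bi-orderable $\mathbb Z\wr \mathbb Z$ mentioned at the start of Section 3. Thus the only point to prove is that $Q\mathcal{O}(\mathbb Z(x))$ has no isolated points.

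My approach to perfectness is to use the classification of Theorem~\ref{classificationSingleVariable}. Given any $\prec\in Q\mathcal{O}(\mathbb Z(x))$ with codifying data $(r_1,r_2,\dots)$ and $(\varepsilon_1,\varepsilon_2,\dots)$, I would build a sequence of orders $\prec_n\neq\prec$ with $\prec_n\to\prec$ by perturbing the data at a sufficiently deep level. Two cases arise according to whether the sequence $(r_i)$ is infinite or finite.

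In the infinite-sequence case (all $r_i$ positive algebraic), for each $N$ I would form a new order $\prec_N$ by keeping $(r_i)_{i<N}$ and $(\varepsilon_i)_{i\leqslant N}$ unchanged, replacing $r_N$ by a nearby transcendental $r_N'\neq r_N$, and terminating the sequence at level $N$. Theorem~\ref{classificationSingleVariable} certifies that this produces a legitimate $\langle x\rangle$-invariant order. Crucially, any $f\in\mathbb Z(x)\setminus H_N$ lies in some $H_s\setminus H_{s+1}$ with $s<N$, so its sign depends only on the unperturbed data; hence $\prec_N$ agrees with $\prec$ on $\mathbb Z(x)\setminus H_N$. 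Because each $p_i$ with $i\geqslant 2$ is a non-constant irreducible polynomial, $\deg\prod_{i=1}^N p_i\to\infty$, which forces every fixed Laurent polynomial eventually out of $H_N$; thus $\prec_N\to\prec$, with $\prec_N\neq\prec$ since they induce different orders on the rank-$1$ subgroup $H_N$.

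In the finite-sequence case the sequence terminates at some $r_k\in\{\text{positive transcendental}\}\cup\{0,\infty\}$. I would choose $r_k^{(n)}\to r_k$ with $r_k^{(n)}\neq r_k$ and define $\prec_n$ from the perturbed data $(r_1,\dots,r_{k-1},r_k^{(n)})$, possibly adjusting $\varepsilon_k$. When $r_k$ is transcendental, continuity of polynomial evaluation in Theorem~\ref{classificationSingleVariable}(ii) immediately gives $\prec_n\to\prec$ since $f(r_k)\neq 0$ for every nonzero $f$. The main obstacle is the endpoint cases $r_k\in\{0,\infty\}$: the sign of $\prod_{i=1}^k (p_i(r_k^{(n)}))^{-1}$ appearing in Theorem~\ref{classificationSingleVariable}(ii) is not automatically compatible with the ``dominant coefficient'' rule that governs the $r_k\in\{0,\infty\}$ branch, so I would select $r_k^{(n)}$ on a suitable one-sided neighbourhood of $r_k$ (using the dominant-term analysis of $f(r_k^{(n)})$ as $r_k^{(n)}\to 0^+$ or $\infty$) and readjust $\varepsilon_k$ if necessary, so that for every element fixed in advance the sign under $\prec_n$ eventually matches that under $\prec$. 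With this bookkeeping arranged, $\prec_n\to\prec$ while $\prec_n\neq\prec$, completing the verification of perfectness and hence of the corollary.
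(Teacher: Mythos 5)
Your proposal is correct and follows essentially the same route as the paper: reduce to showing the space has no isolated points, then perturb the codifying data of \cref{classificationSingleVariable} --- deep in the chain of convex subgroups when the chain avoids the finitely many prescribed elements (using that a fixed Laurent polynomial eventually leaves $H_N$), and otherwise at the terminal level by moving the final $r$ to a nearby transcendental, with one-sided care when $r\in\{0,\infty\}$. The only cosmetic difference is that in the deep-chain case the paper simply flips the sign of the order on a non-trivial convex subgroup $H_s$ not containing the given elements (changing $\varepsilon_{s+1}$), rather than truncating the sequence with a transcendental value.
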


\begin{proof}
	For a fixed order $\prec$ it is enough to show that for any $f_1,\dots,f_k\succ 0$, there exists a $\langle x\rangle$-invariant order $\prec'$ such that $P_{\prec}\neq P_{\prec'}$ and $f_1,\dots, f_k\succ' 0$, where $P_\prec$ and $P_{\prec'}$ are positive cones for $\prec$ and $\prec'$ respectively.
	
	By \cref{classificationSingleVariable}, we have a sequence of nested subgroups 
			\[\dots \subset H_n \subset \dots \subset H_2 \subset H_1 \subset H_0=\mathbb Z(x),\]
	where $H_i$ is the maximal proper $\langle x \rangle$-convex subgroup of $H_{i-1}$ for $i=1,2,\dots$. And $\prec$ is codified by $(r_1,r_2,\dots)$ and $(\varepsilon_1,\varepsilon_2,\dots)$.
	
	Suppose $f_1,\dots,f_k\in \mathbb Z(x)\setminus H_s$ where $H_s$ is non-trivial. Then let $\prec'$ be the order such that $\prec'=\prec$ for $\mathbb Z(x)\setminus H_s$ and $\prec'=-\prec$ for $H_s$. Then $\prec'$ is the order we are looking for. In this case, $\prec'$ is codified as $(r_1,r_2,\dots)$ and $(\varepsilon_1,\dots,-\varepsilon_{s+1},\dots)$, where $\prec$ and $\prec'$ differ only at $\varepsilon_{s+1}$.
	
	If $H_s$ is trivial, WLOG, we can assume that $s=1$ and $\varepsilon_1=1$. We claim that there exists a transcendental number $r_1'$ such that $r_1'\neq r_1$, and $f_i(r_1')>0$ for all $i=1,\dots,k$.
	
	For the case $r_1=\infty$, we can assume that each $f_{i}$ does not consist of any negative power of $x$, since $f(x)\succ 0 \iff x^n f(x)$ for any $n\in \mathbb Z$. Then the leading coefficient is positive for all $f_i(x)$. Thus 
	\[\lim_{t\to \infty} f_i(t)=\infty, \forall i=1,2,\dots,k.\] 
	Therefore, there exists a transcendental number $r_1'$ large enough such that $f_i(r_1')>0$ for all $i=1,2,\dots,k$. The case when $r=0$ is similar.
	
	For the case $r_1$ is transcendental, we again assume that $f_{i}$ does not consist of any negative power of $x$. Each $f_i(x)$ can be written as 
	\[f_i(x)=(x-r_1)g_i(x)+c_i,\]
	where $c_i=f_i(r_1), g_i(x)\in \mathbb R(x)$. By the assumption, $c_i>0$ for all $i$. Notice that  
	\[\lim_{s\to r_1} (x-r_1)g_i(x)=0, \forall i=1,2,\dots,k.\]
	Then there exists a number $\varepsilon$ small enough such that $\varepsilon g_i(r_1+\varepsilon)>-c_i$ for all $i$. Therefore, such $r_1'$ exists. 
	
	Let $\prec'$ be the order codified by $(r_1')$ and $(1)$. Since $r_1\neq r_1'$, there exists a rational number $\frac{m}{n}, m,n\in \mathbb N$, between $r_1$ and $r_1'$. WLOG, we assume that $r_1<\frac{m}{n}<r_1'$. Thus, $nx-m\in P_{\prec'}\setminus P_{\prec}$. And hence $\prec$ and $\prec'$ are different. Moreover, $f_i(x)\succ' 0$ for all $i=1,2,\dots, k$, since $f_i(r_1')>0$. Therefore, we finish the proof.
\end{proof}

Recall that an order is \emph{Archimedean} if for every pair of positive elements $f,g$ there exists a natural number $n$ such that $g<f^n$. One immediate observation is that: 
\begin{corollary}
	An $\langle x\rangle$-invariant order on $\mathbb Z(x)$ is Archimedean if and only if it is codified by $(r_1)$ and $(\varepsilon_1)$ such that $r_1$ is transcendental.
\end{corollary}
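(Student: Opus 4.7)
The plan is to apply the classification \cref{classificationSingleVariable} and reduce the Archimedean question to the structure of the nested chain of $\langle x\rangle$-convex subgroups it provides.

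For the $(\Leftarrow)$ direction, I would assume $\prec$ is codified by $(r_1)$ and $(\varepsilon_1)$ with $r_1$ transcendental. By \cref{classificationSingleVariable}(ii), $f \succ 0$ if and only if $\varepsilon_1 f(r_1)>0$, so the evaluation $\Phi: \mathbb Z(x)\to\mathbb R$ given by $f \mapsto \varepsilon_1 f(r_1)$ is an order-preserving group homomorphism. Transcendence of $r_1$ makes $\Phi$ injective, so $(\mathbb Z(x),\prec)$ embeds as an ordered group into $(\mathbb R, +, <)$, whence the Archimedean property is inherited from $\mathbb R$.

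For $(\Rightarrow)$ I would argue by contrapositive. If the codification of $\prec$ is not of the form $(r_1, \varepsilon_1)$ with $r_1$ transcendental, then by \cref{classificationSingleVariable}(i) exactly one of the following holds: either (A) the chain $H_0 \supset H_1 \supset \cdots$ has length $\geqslant 2$, so $H_1$ is a nontrivial proper $\langle x\rangle$-convex subgroup, or (B) the codification has length one with $r_1 \in \{0,\infty\}$.

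In case (A) I would choose a positive $f \in H_1$ and a positive $g \in \mathbb Z(x)\setminus H_1$. All multiples $nf$ lie in $H_1$, while $g \notin H_1$. The step I expect to be the main obstacle, though short once written down, is the convexity observation that a positive element outside a convex subgroup must strictly exceed every positive element of the subgroup: otherwise $0 \leqslant g \leqslant h$ for some $h \in H_1$ would force $g \in H_1$ by convexity, a contradiction. This yields $g \succ nf$ for all $n \in \mathbb N$, so $\prec$ is not Archimedean. In case (B), the definition $\CI(\varepsilon_1, \varepsilon_1 x) = r_1 \in \{0,\infty\}$ combined with \cref{inducedRelationPartialOrder}(ii) shows that $\varepsilon_1$ and $\varepsilon_1 x$ are not Archimedean-comparable, so the pair $\{\varepsilon_1, \varepsilon_1 x\}$ directly witnesses failure of the Archimedean property.
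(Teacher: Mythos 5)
Your proof is correct and fills in exactly the argument the paper leaves implicit (the corollary is stated there as an ``immediate observation'' with no proof): in the transcendental case the evaluation $f\mapsto \varepsilon_1 f(r_1)$ gives an order-embedding into $(\mathbb R,+,<)$, and in the remaining cases either a nontrivial proper convex subgroup $H_1$ or the pair $\{\varepsilon_1,\varepsilon_1 x\}$ with $\CI\in\{0,\infty\}$ witnesses the failure of the Archimedean property. Both directions are complete and the case analysis is exhaustive.
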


\section{The case when $Q=\mathbb Z^n, n>1$}

Let $Q=\mathbb Z^n$ for $n>1$. We fix a $Q$-invariant order $\leqslant$ on $\mathbb ZQ$. Let $Q_1=\{q \in Q_0 \mid 1_Q \sim_1 q\}$. Recall that $q \sim_1 q'$ if and only if $q \cdot 1_{\mathbb ZQ} \sim q' \cdot 1_{\mathbb ZQ}$. By \cref{inducedOrderOnQuotients}, $Q_1$ is a subgroup and the order $\leqslant$ induces the order $\ll$ on $Q/Q_1$. In particular, if $Q_1$ is a proper subgroup of $Q$, then $Q/Q_1$ is orderable hence torsion-free. 

We pick a transversal $\tilde Q$ of cosets $Q/Q_1$. An element $m\in \mathbb ZQ$ can be uniquely written as 
\[m = q_1 \cdot m_1 + q_2 \cdot m_2 + \dots +q_l \cdot m_l,\]
where $q_i\in \tilde Q, m_i\in \mathbb ZQ_1$ and $q_1 \gg q_2 \gg \dots \gg q_l$. Each $m_i$ is called the coefficient of $q_i$, and $m_1$ is called the leading coefficient of $m$, denoted by $\mathrm{LC}(m).$ 

\begin{proposition}
\label{multiVariable}
	Let $(\mathbb ZQ,\leqslant)$, and $Q_1$ as above. Suppose $1\succ 0$. For any transversal $\tilde Q$ of $Q/Q_1$ we have that for all $m\in \mathbb ZQ$
	\[\varphi(\mathrm{LC}(m))> 0 \Rightarrow m\succ 0, \]
	where $\varphi$ is the homomorphism $\varphi: \mathbb ZQ_1\to \mathbb R$ given by 
	\[\varphi(q')=\CI(1,q'), \forall q'\in Q_1. \]
\end{proposition}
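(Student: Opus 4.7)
The strategy is to show that the leading term $q_1 \cdot m_1$, with $m_1 = \mathrm{LC}(m)$, is positive and dominates, in absolute value, the tail $R := q_2 m_2 + \dots + q_l m_l$. First I apply \cref{homomorphismToReals} to the $\mathbb Z Q_1$-submodule $\mathbb Z Q_1 \cdot 1 \subseteq \mathbb Z Q$, viewed as a free $\mathbb Z Q_1$-module of rank one with basis $e_1 = 1$; the required hypothesis $Q_{e_1} = Q_1$ of that proposition is precisely the definition of $Q_1$. This already yields $m_1 \succ 0$ whenever $\varphi(m_1) > 0$, and hence $q_1 m_1 \succ 0$ by $Q$-invariance.

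I would then upgrade $m_1 \succ 0$ to $m_1 \sim 1$ by applying \cref{homomorphismToReals} twice more. Since $\varphi$ is a homomorphism with $\varphi(1) = \CI(1,1) = 1$, for any integers $n > \varphi(m_1)$ and $N > 1/\varphi(m_1)$ one has $\varphi(n \cdot 1 - m_1) > 0$ and $\varphi(N m_1 - 1) > 0$, giving $m_1 \prec n \cdot 1$ and $1 \prec N m_1$; these two inequalities force $m_1 \sim 1$ via \cref{inducedRelationPartialOrder}(ii), and hence $q_1 m_1 \sim q_1 \cdot 1$ by \cref{propositionComparisonIndex}(iv). Each other coefficient $m_i \in \mathbb Z Q_1$ is a finite $\mathbb Z$-linear combination of elements of $Q_1 \cdot 1$, all of which are $\sim 1$ by the definition of $Q_1$; the triangle inequality then produces a positive integer $N_i$ with $|m_i| \leqslant N_i \cdot 1$, whence $|q_i m_i| \leqslant N_i (q_i \cdot 1)$ by $Q$-invariance. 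Since $q_1 \gg q_2 \gg \dots \gg q_l$ translates to $q_1 \cdot 1 \gg q_i \cdot 1$ for every $i \geqslant 2$, it follows that $q_1 \cdot 1 > C \sum_{i \geqslant 2} N_i (q_i \cdot 1)$ for every positive integer $C$; combined with $q_1 m_1 \sim q_1 \cdot 1$ this yields $q_1 m_1 \succ |R|$, and therefore $m = q_1 m_1 + R \succ 0$.

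The subtle step, and where I expect the main difficulty, is establishing $m_1 \sim 1$: bare positivity of $m_1$ is not enough, since a priori $m_1$ could be infinitesimal with respect to $1$, in which case the tail $R$ could swamp $q_1 m_1$. Substituting $n \cdot 1 - m_1$ and $N m_1 - 1$ back into $\varphi$ and re-applying \cref{homomorphismToReals} is precisely what converts the \emph{strict} positivity of the real number $\varphi(m_1)$ into the desired Archimedean comparability $m_1 \sim 1$.
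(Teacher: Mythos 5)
Your proof is correct, but it takes a genuinely different route from the paper's. The paper's proof is a single application of \cref{dotProduct} to the whole support of $m$: after decomposing $m$ along cosets of $Q_1$, it notes that $\CI(q_{11},q_{ij})=0$ for every support element lying in a lower coset, so that $\varphi(\mathrm{LC}(m))$ equals, up to the positive factor $\CI(q_1,q_{11})$, the full dot product $\sum_{i,j}a_{ij}\CI(q_{11},q_{ij})$, and the positivity of $m$ drops out at once (the paper also spends a paragraph checking that the sign of $\varphi$ on each coefficient is independent of the transversal, which neither argument strictly needs for the stated implication with a fixed transversal). You instead argue in two stages: \cref{homomorphismToReals}, applied to $\mathbb ZQ_1$ as a rank-one free module over itself --- where the hypothesis $Q_{e_1}=Q$ is met because $(Q_1)_1=Q_1$ by definition --- gives $m_1\succ 0$, and a separate domination estimate kills the tail. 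Your bootstrapping of $\varphi(m_1)>0$ to $m_1\sim 1$, by feeding $n\cdot 1-m_1$ and $Nm_1-1$ back into the same homomorphism, is exactly the right move and is the step that the paper's one-shot computation renders invisible; without it the leading term could a priori be infinitesimal relative to $q_1\cdot 1$ and be swamped by the tail. The one assertion you should write out is $q_1\cdot 1\gg\sum_{i\geqslant 2}N_i(q_i\cdot 1)$: this is not bare transitivity of $\gg$, but it does hold because a finite sum of positive elements, each $\ll q_1\cdot 1$, is again $\ll q_1\cdot 1$ (e.g.\ by \cref{dotProduct} with all $r_i=0$, or by summing the inequalities $q_1\cdot 1\succ n(l-1)N_i(q_i\cdot 1)$). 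Overall your organisation is more modular --- it is essentially the ``leading term dominates'' mechanism that the paper abstracts as \cref{leadingTermDominates} --- at the cost of being somewhat longer than the paper's direct computation.
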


\begin{proof}
	Let $m\in \mathbb ZQ$. The $m$ can be written as 
	\[m=\sum_{i=1}^l \sum_{j=1}^{t_i} a_{ij}q_{ij},\]
	where $q_{ij}\sim_1 q_{ij'}, a_{ij}\in \mathbb Z$. 
	
	We pick a transversal $\tilde Q$ of cosets $Q/Q_1$. Then 
	\[m=\sum_{i=1}^l q_i\sum_{j=1}^{t_i} a_{ij}q_i^{-1}q_{ij}, q_i\in \tilde Q, q_i \sim_1 q_{i1}.\]
	Since $q_{i} \sim_1 q_{i1}$, then $1\sim_1 q_i^{-1}q_{ij}$. Thus, $m_i:=\sum_{j=1}^{t_i} a_{ij}q_i^{-1}q_{ij}\in \mathbb ZQ_1$.
	
	We claim that the sign of $\varphi(m_i)$ does not depend on the choice of $\tilde Q$. Take $q_i'\in q_iQ_1$. Then the coefficient becomes 
	\[m_i':=\sum_{j=1}^{t_i} a_{ij}q_i'^{-1}q_{ij}.\]
	We have that
	\[\varphi(m_i)=\sum_{j=1}^{t_i} a_{ij}\CI(1,q_i^{-1}q_{ij})=\CI(q_i,q_i')\sum_{j=1}^{t_i} a_{ij}
	\CI(1,q_i'^{-1}q_{ij})=\CI(q_i,q_i')\varphi(m_i'),\]
	since by \cref{propositionComparisonIndex}
	\[\CI(q_i,q_i')\CI(1,q_i'^{-1}q_{ij})=\CI(q_i,q_i')\CI(q_i',q_{ij})=\CI(q_i,q_{ij})=\CI(1,q_i^{-1}q_{ij}).\]
	Because $q_i'\in q_iQ_1$, the comparison index $\CI(q_i,q_i')$ is a positive real number. Therefore, the signs of $\varphi(m_i)$ and $\varphi(m_i')$ are always the same.
	
	Next, we will show that if $\varphi(\mathrm{LC}(m))>0$ then $m\succ 0$. Since we have a decomposition of $m$ as 
	\[m=\sum_{i=1}^l q_i\sum_{j=1}^{t_i} a_{ij}q_i^{-1}q_{ij}, q_i\in \tilde Q, q_i \sim_1 q_{i1}.\]
	We assume $q_1\gg q_i$ for $i=2,3,\dots,l$. Consider the abelian subgroup generated by $\{q_{ij}\mid i=1,2,\dots,l, j=1,2,\dots,t_i\}.$ Note that $\CI(q_1,q_{11})\in (0,\infty)$ and $\CI(q_{11},q_{ij})=0$ for $i\geqslant 2$. Moreover, we have that 
	\[\varphi(\mathrm{LC}(m))=\CI(q_1,q_{11})\sum_{j=1}^{t_i} a_{1j}\CI(1,q_{11}^{-1}q_{1j})=\CI(q_1,q_{11})\sum_{i=1}^{l} \sum_{j=1}^{t_i} a_{ij}\CI(q_{11},q_{ij}).\]
	
	By \cref{dotProduct}, 
	\[\varphi(\mathrm{LC}(m))=\CI(q_1,q_{11})\sum_{i=1}^{l} \sum_{j=1}^{t_i} a_{ij}\CI(q_{11},q_{ij})>0 \Rightarrow m\succ 0.\]
\end{proof}

When the rank of the free $\mathbb ZQ$-module exceeds 1, the situation is much more complicated, as $Q_{e_1}$ can be different from $Q_{e_2}$ for different basis elements $e_1,e_2$. The following proposition is a variation of \cref{dotProduct}, and it is heavily used in the later part of the paper.

\begin{proposition}
\label{leadingTermDominates}
	Let $Q=\mathbb Z^n$ and $(F_k,\leqslant)$ be the free $\mathbb ZQ$-module of rank $k$, where the basis element $e_1,e_2,\dots, e_k$ of $F_k$ are all positive. Let $f \in F_k$. Then $f$ can be written as 
	\[f=a_1 f_1+a_2 f_2+\dots a_s f_s+f_r, a_i\in \mathbb Z, f_i\in \{q\cdot e_j\mid q\in Q\}, f_r\in F_k,\]
	where $\{f_1,f_2,\dots,f_s\}=\mathrm{Max}(\supp f)$ and $f_1\gg f_r$. Then we have
	\[(a_1,a_2,\dots,a_s)\cdot (1,r_2,\dots,r_s)>0 \Rightarrow f>0_{F_k},\]
	where $r_i=\CI(f_1,f_i)$ for $i=2,3,\dots,s$.
\end{proposition}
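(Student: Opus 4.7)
The plan is to reduce the claim to a single application of Theorem \ref{dotProduct} inside an appropriate finitely generated abelian subgroup of $F_k$. Enumerate $\supp f \setminus \mathrm{Max}(\supp f) = \{g_1,\dots,g_m\}$ (possibly empty) and write $f_r = \sum_{j=1}^m n_j g_j$, so that $f = \sum_{i=1}^s a_i f_i + \sum_{j=1}^m n_j g_j$ lives in the abelian subgroup $C = \langle f_1,\dots,f_s,g_1,\dots,g_m\rangle \leqslant F_k$. Since these generators are distinct elements of the canonical $\mathbb Z$-basis $\{q\cdot e_l : q\in Q,\, l=1,\dots,k\}$ of $F_k$, they are $\mathbb Z$-linearly independent, so $C$ is free abelian of rank $s+m$, and each generator is positive because each $e_l$ is positive and $Q$ acts by order-preserving isomorphisms.

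Next I would pin down the comparison indices needed to apply Theorem \ref{dotProduct}. Because $\ll$ induces a strict total order on $\supp f / \sim$ by Proposition \ref{inducedRelationPartialOrder}(v) and $\supp f$ is finite, $\mathrm{Max}(\supp f)$ is precisely the top $\sim$-equivalence class. Hence $f_1 \sim f_i$ for every $i=2,\dots,s$, giving $r_i = \CI(f_1,f_i) \in (0,\infty)$, and $f_1 \gg g_j$ for every $j=1,\dots,m$, giving $\CI(f_1,g_j) = 0$. In particular, every comparison index with $f_1$ is finite, which is the standing assumption of Theorem \ref{dotProduct}.

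Finally, I would apply Theorem \ref{dotProduct} to the positive generators $f_1,f_2,\dots,f_s,g_1,\dots,g_m$ of $C$ with the vector $\mathbf{r} = (1,r_2,\dots,r_s,0,\dots,0)$. The hypothesis of the present proposition then reads
\[
(a_1,\dots,a_s,n_1,\dots,n_m)\cdot \mathbf{r} = (a_1,\dots,a_s)\cdot(1,r_2,\dots,r_s) > 0,
\]
and Theorem \ref{dotProduct} immediately concludes that $f > 0_C = 0_{F_k}$. Once one recognises that the structure of $\mathrm{Max}$ produces a degenerate zero tail in $\mathbf{r}$, the proposition is just a bookkeeping instance of Theorem \ref{dotProduct}, so no substantive obstacle remains.
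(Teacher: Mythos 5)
Your proof is correct and takes essentially the same route as the paper, whose entire proof is the single line that the proposition ``directly follows from \cref{dotProduct}.'' Your write-up merely supplies the bookkeeping the paper leaves implicit: expanding $f_r$ over $\supp f\setminus\mathrm{Max}(\supp f)$, noting that those generators contribute comparison index $0$ with $f_1$ while the $f_i$ contribute $r_i\in(0,\infty)$, and applying \cref{dotProduct} to the resulting free abelian subgroup.
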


\begin{proof}
	It directly follows from \cref{dotProduct}.
\end{proof}

In the decomposition of $f$, the part $a_1 f_1+a_2 f_2+\dots a_s f_s $ is called the leading term of $f$.

\section{Bi-orders on the Free Metabelian Group of rank 2}

The following commutator formulas are used throughout the paper.

\begin{lemma}
\label{commutatorFormula}
	Let $G$ be a group. Then for $a,b,c\in G$ we have
	\[[a,bc]=[a,c][a,b]^c, \text{ and }[ab,c]=[a,c]^b[a,c].\]
	In addition, if $G$ is metabelian, we have
	\[[a^m,b^n]=[a,b]^{(1+a)^{m-1}(1+b)^{n-1}}, \forall m,n\in \mathbb N.\]
\end{lemma}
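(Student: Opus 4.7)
The plan is to prove the three identities in order, with the first two being universal group-theoretic formulas and the third relying essentially on the metabelian hypothesis through the $\mathbb Z[G/G']$-module structure on the derived subgroup $G'$.

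For the first two identities, the approach is direct expansion as words. I would expand $[a,bc]=a^{-1}(bc)^{-1}a(bc)=a^{-1}c^{-1}b^{-1}abc$ and check that $[a,c][a,b]^c=(a^{-1}c^{-1}ac)(c^{-1}a^{-1}b^{-1}abc)$ collapses to the same word via the cancellations $ac\cdot c^{-1}a^{-1}=1$; the companion identity for $[ab,c]$ is verified by the symmetric expansion. These are elementary consequences of the definitions $[f,g]=f^{-1}g^{-1}fg$ and $f^g=g^{-1}fg$; no group-theoretic hypothesis beyond associativity is used.

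For the metabelian identity, the key structural observation is that $G'$ is abelian, so conjugation by $g\in G$ depends only on its image $\bar g\in G/G'$. Linear extension endows $G'$ with the structure of a right module over the commutative ring $\mathbb Z[G/G']$, and for $c\in G'$ and $w\in \mathbb Z[G/G']$ the symbol $c^w$ is unambiguous and satisfies $c^{w_1+w_2}=c^{w_1}c^{w_2}$ and $c^{w_1w_2}=(c^{w_1})^{w_2}$. Recasting (ii) and (i) in this language produces the recursions
\[
	[a^m,b] = [a^{m-1},b]^{a}\cdot [a,b],\qquad [a^m,b^n] = [a^m,b^{n-1}]\cdot [a^m,b]^{b^{n-1}}.
\]
An induction on $m$ applied to the first recursion rewrites $[a^m,b]$ as $[a,b]$ raised to an explicit polynomial in $\bar a$; fixing that expression and iterating the second recursion with an induction on $n$ multiplies in the $\bar b$-contribution and produces the claimed product exponent in $\mathbb Z[G/G']$.

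The main obstacle is purely combinatorial: identifying the exponent polynomial produced by the nested induction and reconciling it with the compact form written in the statement. Once the right-module formalism is in place and parts (i), (ii) are recast as module-theoretic recursions, the only remaining subtlety is the careful bookkeeping of powers of $\bar a$ and $\bar b$ accumulating through each inductive step, with the commutativity of $\mathbb Z[G/G']$ allowing us to freely commute the two one-variable inductions.
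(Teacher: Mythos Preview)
Your approach is correct and is exactly what the paper intends: its own proof reads, in full, ``The proof is straightforward.'' Be aware, though, that actually executing your plan will expose two typos in the printed statement --- the second identity should read $[ab,c]=[a,c]^{b}[b,c]$ (not $[a,c]^{b}[a,c]$), and your double induction in the metabelian case yields the geometric-series exponent $(1+a+\cdots+a^{m-1})(1+b+\cdots+b^{n-1})$ rather than $(1+a)^{m-1}(1+b)^{n-1}$ (check $m=3$, $n=1$) --- so the ``reconciliation'' you flag as the main obstacle will in fact fail as written, though not through any fault in your method.
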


\begin{proof}
	The proof is straightforward.
\end{proof}

Now we are ready to prove our main theorem for the free metabelian group of rank 2.

\begin{theorem}
\label{rank2}
	$M_2'$ is convex with respect to any bi-invariant order on $M_2$.
\end{theorem}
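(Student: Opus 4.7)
The plan is to argue by contradiction. Suppose some bi-order $\leqslant$ on $M_2$ makes $M_2'$ non-convex. Then $\overline{M_2'}\supsetneq M_2'$ and there is $g\in\overline{M_2'}\setminus M_2'$ with $g>1$. Writing $g = a_1^m a_2^n c_0$ with $(m,n)\neq(0,0)$ and $c_0\in M_2'$, pick $h\in M_2'\cap P$ with $g\leqslant h$. Bi-invariance yields $g^q\leqslant h^q$ for every $q\in M_2$; an easy absolute-value manipulation then gives
\[
\bigl|[g,q]\bigr| \;\leqslant\; (1+q)\cdot h
\]
in the induced $Q$-invariant order on $M_2'\cong\mathbb{Z}Q$ (where $c=[a_1,a_2]\leftrightarrow 1$ via the Magnus embedding, exhibiting $M_2'$ as a free $\mathbb{Z}Q$-module of rank one).

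Using \cref{commutatorFormula} and this module structure, a direct computation gives
\[
[g, a_1^k] = \frac{a_1^k-1}{a_1-1}\,D_1, \qquad D_1 := (a_1-1)c_0 - \frac{a_2^n-1}{a_2-1},
\]
and symmetrically $[g, a_2^l] = \frac{a_2^l-1}{a_2-1}\,D_2$ for an analogous $D_2\in\mathbb{Z}Q$. Substituting $q = a_1^k$ (resp.\ $a_2^l$) in the bound above yields
\[
\Bigl|\frac{a_1^k-1}{a_1-1}D_1\Bigr|\leqslant (1+a_1^k)h, \qquad \Bigl|\frac{a_2^l-1}{a_2-1}D_2\Bigr|\leqslant (1+a_2^l)h
\]
for all $k,l\geqslant 1$.

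The heart of the proof is to show that these uniform bounds force $D_1 = D_2 = 0$ in $\mathbb{Z}Q$. One invokes the leading-term / comparison-index analysis of Section~5, in particular \cref{multiVariable}, applied to the $Q$-invariant order on $\mathbb{Z}Q$. Splitting into cases on the subgroup $Q_1\leqslant Q$ of \cref{inducedOrderOnQuotients} and the positions of $a_1,a_2$ relative to $1$ in the order, and working inductively through the chain of $Q$-convex subgroups of $\mathbb{Z}Q$ as $k,l\to\infty$, one shows in every case that the leading coefficient of $D_i$ must vanish at every level, hence $D_1 = 0$ and likewise $D_2 = 0$. But a short algebraic check using that $\mathbb{Z}Q = \mathbb{Z}[x^{\pm 1}, y^{\pm 1}]$ is an integral domain rules out simultaneous vanishing: $D_1 = 0$ with $n\neq 0$ would force $(a_1-1)\mid\frac{a_2^n-1}{a_2-1}$ in $\mathbb{Z}Q$, which is false, and symmetrically $D_2 = 0$ fails when $m\neq 0$. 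This contradiction completes the proof.

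The main obstacle is the leading-term case analysis: it must be executed uniformly across the possible structures of the $Q$-invariant order on $\mathbb{Z}Q$ (controlled by $Q_1$), and the fact that non-trivial $D_i$ is eventually excluded hinges crucially on $\mathbb{Z}Q$ being an integral domain — equivalently, on $M_2'$ being a free $\mathbb{Z}Q$-module of rank one. This freeness is precisely the feature that distinguishes $M_2$ from $M_n$ with $n\geqslant 3$, where \cref{notConvex} produces non-convex examples.
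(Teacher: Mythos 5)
Your reduction to the bounds $|[g,q]|\leqslant(1+q)\cdot h$ is correct, as is the computation $[g,a_1^k]=\frac{x^k-1}{x-1}D_1$ and its counterpart for $a_2^l$, but the central claim --- that these bounds force $D_1=D_2=0$ --- is false, and this is a genuine gap rather than an omitted routine verification. The reason is a matching of growth rates: as $k\to\infty$ the left-hand side $\bigl|\frac{x^k-1}{x-1}D_1\bigr|$ and the right-hand side $(1+x^k)h$ grow at the same rate in $x^k$, so in the limit one only extracts an inequality of the shape ``$|D_1|\lesssim(x-1)h$'', not $D_1=0$. Concretely, take the $Q$-invariant order on $M_2'\cong\mathbb Z(x,y)$ given by $f\succ 0\iff f(r,s)>0$ for algebraically independent transcendentals $r,s>1$, let $g=a_1$ (so $m=1$, $n=0$, $c_0=0$, hence $D_1=0$ and $D_2=1$), and let $h=[a_1,a_2]^N$ with $N(s-1)\geqslant 1$. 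Then $[g,a_1^k]=0$ and $|[g,a_2^l]|=\frac{s^l-1}{s-1}\leqslant(1+s^l)N$ for every $l\geqslant 1$, so every inequality you write down is satisfied while $D_2\neq 0$. (Of course $a_1\leqslant[a_1,a_2]^N$ is in fact impossible --- that is the theorem --- but the bounds you derive from it are too weak to detect this.)

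What is missing is input from conjugators that mix the two generators and are pushed to infinity in a direction adapted to the order. For instance, with $q=a_1^ka_2^l$ one gets $[g,q]=\frac{y^l-1}{y-1}D_2+y^l\,\frac{x^k-1}{x-1}D_1$ bounded by $(1+x^ky^l)h$; sending $k\to-\infty$ (in the evaluation order above) kills the growth of the right-hand side while the left-hand side still grows in $l$ unless $(x-1)D_2-(y-1)D_1=x^m+y^n-2$ ``vanishes'' in the order --- and it is this combination, not $D_1$ and $D_2$ separately, that carries the contradiction. Carrying this out uniformly over all $Q$-invariant orders, including the lexicographic cases $\CI(1,x)\in\{0,\infty\}$ and the nested chains of convex subgroups, is essentially the content of the paper's proof, which is organised differently: it first uses an automorphism to reduce $a^ib^j$ to $a$, then studies the convex set $S=\{f:\iota^{-1}(f)<a\}$ and iterates the conjugation constructions to produce elements of $S$ that are arbitrarily large with respect to the homomorphism of \cref{multiVariable} (or with respect to the leading-coefficient filtration), concluding $S=\mathbb Z(x,y)$. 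Your closing algebraic observation --- that $D_1=D_2=0$ is impossible for $(m,n)\neq(0,0)$ because $\mathbb ZQ$ is a domain --- is correct, but no valid chain of implications in your argument reaches that point.
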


\begin{proof}	
	Let $a,b$ be generators of $M_2$, and $Q=\mathbb Z^2\cong M_2/M_2'$. The quotient map from $M_2$ to $Q$ is denoted by $\pi$. Then $x=\pi(a)$ and $y=\pi(b)$ form a basis of $Q$. 

	Let $\leqslant$ be a bi-order on $M_2$. The restriction of $\leqslant$ on $M_2'$ gives a $Q$-invariant order on the free $\mathbb ZQ$-module of rank 1. By replacing $a,b$ by $a^{-1}$ and $b^{-1}$ if necessary, we can always assume that $[a,b]>1$ and $a>1$. We then have an isomorphism $\iota: M_2' \to \mathbb Z(x,y)$ such that 
	\[\iota([a,b])=1, \iota([a,b]^a)=x, \iota([a,b]^b)=y.\]
	The order $\leqslant$ on $M_2'$ induces a Q-invariant order $\prec$ on $\mathbb Z(x,y)$.
	
	To prove the theorem, it suffices us to show that $|a^{i}b^{j}|>M_2'$. We first claim that it is enough to prove the theorem for the case $a>M_2'$. For $a^{i}b^{j}$ where $i,j$ are coprime, there exists an automorphism of $M_2$ such that it sends $a^{i}b^{j}$ to $ga$ where $g\in M_2'$. Note that the image of a positive cone is again a positive cone (possibly for a different order). We denote the induced order by $\leqslant'$. We observe that $|ga|>' M_2'$ if and only if $|a|>' M_2'$. Thus, $|a^ib^j|>M_2'$ if and only if $|a| >' M_2'$. Moreover, if $|a^{i}b^{j}|>M_2'$ for coprime $i,j$ then for every positive integer $k$ we have $|a^{ki}b^{kj}|>M_2'$. In summary, if $|a|> M_2'$ under any bi-order, then so does $|a^ib^j|$. Hence, the claim is proved. 
	
	Let $S\subset \mathbb Z(x,y)$ be a set consisting of all elements which pre-images are less than $a$, i.e.,
	\[S=\{f(x,y)\in \mathbb Z(x,y) \mid \iota^{-1}(f)<a\}.\]
	Note that $S$ is convex by its definition.
	
	We have the following properties of $S$. If $f\in S$, then
	
	\begin{enumerate}
		\item $0\in S$;
		\item $x^n f\in S$ for all $n\in \mathbb Z$;
		\item $y^n f-(1+y)^{n-1}\in S$ for all $n\in \mathbb N$;
		\item $y^{-n}f+y^{-n}(1+y)^{n-1}\in S$ for all $n\in \mathbb N$;
		\item $f+(x-1)g\in S$ for all $g\in \mathbb Z(x,y)$.
	\end{enumerate}
	
	We will provide the proof of these properties after the proof of the theorem.
	
	Thus, to prove $a>M_2'$ it is enough to show that $S=\mathbb Z(x,y)$.
	
	Let $\CI(1,x;\prec)=r,\CI(1,y;\prec)=s$. We have three cases to consider. 
	
	The first case is that $r,s\in (0,\infty)$. Thus, by \cref{multiVariable} we have a homomorphism $\varphi: \mathbb Z(x,y)\to \mathbb R$, where $x\mapsto r, y\mapsto s$, such that $\varphi^{-1}((0,\infty))\subset P_{\prec}$.
	
	Using the properties we have
	
	\begin{align*}
		0\in S &\xrightarrow{\text{by (3)}} y^{-n}(1+y)^{n-1}\in S \xrightarrow{\text{by (2) for $n=1$}} y^{n-1}(1+y)^{n-1}-1\in S\\
	& \xrightarrow{\text{by (2) for $n-1$}} (1+y)^{n-1}-y^{n-1}-(1+y)^{n-2}.
	\end{align*}
	If $s<1$, we have
	\[\lim_{n\to \infty} \varphi(y^{-n}(1+y)^{n-1})=\infty.\]
	And if $s\geqslant 1$, we have
	\[\lim_{n\to \infty} \varphi((1+y)^{n-1}-y^{n-1}-(1+y)^{n-2})=\infty.\]
	In either cases, for any $t>0$ there exists $f\in S$ such that $\varphi(f)>t.$ Equivalently, for any $g\in \mathbb Z(x,y)$ there exists $f\in S$ such that $f\succ g$. Therefore, $S=\mathbb Z(x,y)$.
	
	The second case is $r=0$ or $r=\infty$. By \cref{multiVariable}, $Q_1=\{q \mid 1\sim_1 q\}$ is proper. Take a transversal $\tilde Q$ of $Q/Q_1$. Again by \cref{multiVariable}, we have a homomorphism $\varphi': \mathbb ZQ_1\to \mathbb R$ such that $\varphi'^{-1}((0,\infty))$ is positive.
	
	Let $g\in \mathbb Z(x,y)$ be a positive element. Then it can be expressed in the following form.
	\[g=q_1\cdot g_1+q_2 \cdot g_2+\dots +q_l \cdot g_l, q_i\in \tilde Q, g_i\in \mathbb ZQ_1,\]
	where $q_1\gg q_2 \gg \dots \gg g_l.$ Since $g$ is positive, $\varphi'(g_1)\geqslant 0$.
	We let 
	\[g'=\begin{cases}
		q_1\cdot (g_1+1)+q_2 \cdot g_2 + \dots +q_l \cdot g_l & \text{ if $r=\infty$,}\\
		-x^{-1}(q_1\cdot (g_1+1)+q_2 \cdot q_2 + \dots +q_l \cdot g_l) & \text{ if $r=0$.}
	\end{cases}\]
	By the construction (4), we have $(x-1)g'\in S$. Note that if $r=\infty$, we have $x\gg 1$ and hence $xq_1 \gg xq_i, xq_1\gg q_i$ for $i\geqslant 2$. If $r=0$, we have $1\gg x$, and hence $x^{-1}q_1 \gg x^{-1}q_i, x^{-1}q_1\gg q_i$. In either cases, 
	\[\mathrm{LC}((x-1)g'-g)=g_1+1.\]
	Then 
	\[\varphi'(\mathrm{LC}((x-1)g'-g))=\varphi'(g_1)+1>0.\]
	Therefore, $(x-1)g'\in S$ and $(x-1)g'>g$, which implies $g\in S$. Since the choice of $g$ is arbitrary, we have $S=\mathbb Z(x,y)$.
	
	The last case is that $r\in (0,\infty)$ and $s\in \{0,\infty\}$. In this case, $Q_1=\langle x\rangle$. By the discussion of the first case, we have already shown that 
	\[ y^{-n}(1+y)^{n-1}, (1+y)^{n-1}-y^{n-1}-(1+y)^{n-2} \in S, \forall n\in \mathbb N.\]
	Note that if $1\ll y$, we have 
	\[(1+y)^{n+1}-y^{n+1}-(1+y)^{n}\succ y^n \succ y^{-n} , \forall n\in \mathbb N.\]
	And if $1 \gg y$, we have
	\[ y^{-n}(1+y)^{n-1} \succ y^{-n} \succ y^{n}, \forall n\in \mathbb N.\]
	Since $S$ is convex, we have that $y^{-n},y^{n}\in S$ for all $n\in \mathbb N$. It immediately follows that $S=\mathbb Z(x,y).$
		
	Therefore, $a>\iota^{-1}(S)=M_2'$ in all cases. The theorem is proved.
\end{proof}

\begin{lemma}
	Let $S$ be as above. If $f\in S$, then	
	\begin{enumerate}
		\item $0\in S$;
		\item $x^n f\in S$ for all $n\in \mathbb Z$;
		\item $y^n f-(1+y)^{n-1}\in S$ for all $n\in \mathbb N$;
		\item $y^{-n}f+y^{-n}(1+y)^{n-1}\in S$ for all $n\in \mathbb N$;
		\item $f+(x-1)g\in S$ for all $g\in \mathbb Z(x,y)$.
	\end{enumerate}
\end{lemma}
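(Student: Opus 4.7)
The plan is to translate each of the five conditions into a statement about elements of $M_2'$ via the $\mathbb{Z}Q$-module isomorphism $\iota$, and then verify it using bi-invariance of the order, the commutator identities of \cref{commutatorFormula}, and the metabelian identity (that $M_2'$ is abelian). Throughout, set $e:=\iota^{-1}(f)\in M_2'$, so $e<a$ by hypothesis.

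Property (1) is immediate: $\iota^{-1}(0)=1<a$. For (2), multiplication by $x$ in $\mathbb{Z}(x,y)$ corresponds to conjugation by $a$ in $M_2'$, so $\iota^{-1}(x^n f)=e^{a^n}$; since conjugation by $a^n$ preserves the bi-order and fixes $a$, the hypothesis $e<a$ yields $e^{a^n}<a^{a^n}=a$.

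For (3) and (4), I would first use \cref{commutatorFormula} to identify $\iota^{-1}((1+y)^{n-1})=[a,b^n]$, since $[a,b^n]=[a,b]^{(1+b)^{n-1}}$ and the action of $b^k$ on the module generator $[a,b]$ is multiplication by $y^k$. A direct calculation in $M_2$ then rewrites $\iota^{-1}(y^n f-(1+y)^{n-1})$ as $(ea^{-1})^{b^n}\cdot a$, so the inequality $<a$ reduces to $(ea^{-1})^{b^n}<1$, which follows from $ea^{-1}<1$ together with bi-invariance. Property (4) is analogous, with $b^{-n}$ in place of $b^n$.

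The main obstacle is (5), since $g\in\mathbb{Z}(x,y)$ is arbitrary, so $h:=\iota^{-1}(g)\in M_2'$ may be arbitrarily large in the order. The translation of the claim is $e\cdot h^a h^{-1}<a$. The key observation is the metabelian identity $h^a h^{-1}=[a,h]^{-1}$ in $M_2'$, which can be verified by comparing images under $\iota$ (both sides have image $(x-1)\iota(h)$). It therefore suffices to show $e<a\cdot[a,h]$, and a direct computation gives $a\cdot[a,h]=h^{-1}ah=a^h$. Finally, because $e,h\in M_2'$ and $M_2'$ is abelian, we have $e^h=e$; applying bi-invariance to $e<a$ yields $e=e^h<a^h$, which is the desired inequality.
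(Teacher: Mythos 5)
Your proof is correct and follows the same route as the paper: each property is obtained by conjugating the inequality $\iota^{-1}(f)<a$ by $a^n$, $b^{\pm n}$, or $h=\iota^{-1}(g)\in M_2'$ respectively, and translating back through $\iota$. You simply spell out the commutator bookkeeping (e.g.\ $a^{b^n}=a[a,b^n]$ and $e^h=e$) that the paper leaves as ``similar.''
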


\begin{proof}
	(1) is obvious, since $a$ is positive. 
	
	Let $f\in S$, we have 
	\[\iota^{-1}(f) <a \Rightarrow \iota^{-1}(f)^{a^n}<a, \forall n\in \mathbb Z.\]
	Thus 
	\[\iota(\iota^{-1}(f)^{a^n})=x^nf\in S.\]
	The construction (2) is proved.
	
	The proof of the rest constructions is similar.	Construction (3) and (4) follow from conjugating $\iota^{-1}(f) <a$ by $b^n,b^{-n}$ respectively for $n \in \mathbb N$. Construction (5) follows from conjugating  $\iota^{-1}(f) <a$ by $h\in M_2'$ where $\iota(h)=g$.
\end{proof}

It is well-known that left-orderability is preserved under group extensions. Let $G$ be an extension of $A$ by $Q$, where $\pi:G\to Q$ is the quotient map, and suppose $A,Q$ are left-orderable. In addition, if we assume $P_A$ and $P_Q$ are positive cones of $A$ and $Q$ respectively, then $P:=P_A\cup \pi^{-1}(P_T)$ is a positive cone of a left-order on $G$, and thus $G$ is also left-orderable. While in general, bi-orderability is not preserved under group extensions. But if we assume that $P_A$ is invariant under the action of $Q$, then in this case $P_A\cup \pi^{-1}(P_T)$ defines a bi-order on $G$. An order given by such a construction is called a \emph{lexicographical order leading by the quotient}.

\begin{proposition}
	\label{convexToLexicographical}
	Let $G$ be a finitely generated orderable group that is an extension of $A$ by $Q$. If $A$ is convex with respect to order $\leqslant$, then $\leqslant$ is a lexicographical order leading by the quotient where the order on the quotient $Q$ is induced by $\leqslant$.
\end{proposition}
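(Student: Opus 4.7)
The aim is to exhibit $P_{\leqslant}$ as the disjoint union $P_A \sqcup \pi^{-1}(P_Q)$ for suitable positive cones $P_A$ on $A$ and $P_Q$ on $Q$. As a preliminary observation, $A$ is normal in $G$ (being the kernel of $\pi$), and the restriction $\leqslant_A$ of $\leqslant$ to $A$ is a bi-order with positive cone $P_A := P_{\leqslant}\cap A$. Bi-invariance of $\leqslant$ on $G$ combined with the normality of $A$ shows that conjugation by any element of $G$ sends $P_A$ to itself, so the induced action of $Q=G/A$ on $A$ is by order-preserving automorphisms; this already verifies the $Q$-invariance condition required for a lexicographic order to be bi-invariant.

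Next, I would use convexity of $A$ to produce the order on the quotient. Define $\leqslant_Q$ on $Q$ by declaring $\pi(g)\leqslant_Q \pi(g')$ whenever $g\leqslant g'a$ for some $a\in A$. Well-definedness of this relation (independence of coset representative), totality, and bi-invariance are routine verifications; the only non-trivial input is precisely the convexity hypothesis, which guarantees that for any coset $gA\neq A$ either every element of $gA$ is greater than every element of $A$, or every element is less. Consequently the cosets are linearly ordered, and $P_Q$ is the positive cone of $\leqslant_Q$.

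The crux is then to show $P_{\leqslant}=P_A\cup \pi^{-1}(P_Q)$. The inclusion $\supseteq$ is immediate: elements of $P_A$ are positive in $G$ by definition, and if $g\in \pi^{-1}(P_Q)$ then $gA>_Q A$, which by the previous paragraph means every element of $gA$ exceeds every element of $A$, so in particular $g>1$. For the inclusion $\subseteq$, take $g>1$; if $g\in A$ then $g\in P_A$, so assume $g\notin A$. I claim $g>a$ for every $a\in A$: otherwise $g\leqslant a$ for some $a\in A$, and then $1\leqslant g\leqslant a$ with both $1$ and $a$ in $A$ would force $g\in A$ by convexity, a contradiction. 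Hence $gA>_Q A$, and $g\in \pi^{-1}(P_Q)$.

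There is no real obstacle here; the proof rests entirely on the standard principle that a convex normal subgroup of an ordered group induces an order on the quotient, together with the observation that convexity lets one dichotomise elements of $G$ according to whether they lie in $A$ or strictly dominate (in absolute value) all of $A$.
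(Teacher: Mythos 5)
Your proposal is correct and follows essentially the same route as the paper: induce an order on $Q$ from $\leqslant$ via the convexity of $A$, then verify that $P_{\leqslant}$ decomposes as $P_A\cup\pi^{-1}(P_Q)$. You simply spell out the details (the coset dichotomy and the two inclusions) that the paper leaves as ``not hard to check.''
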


\begin{proof}
	Let $\pi: G\to Q$ be the canonical quotient map. Then we define an order $\widetilde \leqslant$ on $Q$ in the following way: $q_1\ \widetilde \leqslant\ q_2$ in $Q$ if $\pi^{-1}(q_1)\leqslant \pi^{-1}(q_2)$ in $G$. It is well-defined, since $A$ is convex. Let $P_Q$ be the positive cone in $Q$ associated with $\widetilde \leqslant$ and $P_A$ the positive cone in $A$ associated with the restriction of $\leqslant$ on $A$. Then it is not hard to check that $P_A\cup \pi^{-1}(P_Q)$ is the positive cone associated with $\leqslant$ in G. Hence $\leqslant$ is a lexicographical order leading by the quotient.
\end{proof}

Thus, one immediate consequence of \cref{rank2} is the following.
\begin{corollary}
\label{unbounded}
	Any bi-invariant order $\leqslant$ on $M_2$ is a lexicographical order leading by the quotient with respect to the extension of $M_2'$ by $M_2/M_2'\cong \mathbb Z^2$.
\end{corollary}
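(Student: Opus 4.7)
The plan is to recognize this corollary as an immediate consequence of two results already established in the excerpt: \cref{rank2} and \cref{convexToLexicographical}. Since \cref{rank2} guarantees that the derived subgroup $M_2'$ is convex with respect to \emph{every} bi-invariant order on $M_2$, and since $M_2$ is tautologically the extension
\[ 1 \to M_2' \to M_2 \to M_2/M_2' \to 1, \]
with quotient $M_2/M_2' \cong \mathbb{Z}^2$, the hypotheses of \cref{convexToLexicographical} are satisfied for $G = M_2$, $A = M_2'$, $Q = M_2/M_2'$.

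First, I would fix an arbitrary bi-invariant order $\leqslant$ on $M_2$. By \cref{rank2}, the subgroup $M_2' \leqslant M_2$ is convex. Then I invoke \cref{convexToLexicographical}: the order $\leqslant$ must equal the lexicographical order with positive cone $P_{M_2'} \cup \pi^{-1}(P_{M_2/M_2'})$, where $\pi\colon M_2 \to M_2/M_2'$ is the canonical quotient map, $P_{M_2'}$ is the positive cone of the restriction of $\leqslant$ to $M_2'$, and $P_{M_2/M_2'}$ is the positive cone of the bi-order induced on $M_2/M_2'$ by declaring $q_1 \leqslant q_2$ whenever every preimage of $q_1$ is $\leqslant$ every preimage of $q_2$ (well-defined precisely because $M_2'$ is convex).

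Since the argument is a direct citation with no calculation to perform, there is no real obstacle; the only point worth noting is that the bi-invariance of the order on the quotient $M_2/M_2'$ is automatic because $M_2/M_2'$ is abelian, so \emph{any} left-order on it is automatically a bi-order. Thus the corollary follows in a single sentence once the two previous results are cited, and I would write the proof as exactly such a one-sentence deduction.
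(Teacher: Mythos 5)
Your proposal is correct and is exactly the paper's argument: the corollary is stated as an immediate consequence of \cref{rank2} (convexity of $M_2'$) combined with \cref{convexToLexicographical}. Nothing is missing.
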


\section{Bi-orders on Free Metabelian Groups of Higher Rank}

Since $M_n'$ is no longer a free $\mathbb ZQ$-module when $n>2$, we have to consider $Q$-invariant orders on general $Q$-orderable $\mathbb ZQ$-modules. The following lemma allows us to lift the $Q$-invariant order on a finitely generated $\mathbb ZQ$-module to a $Q$-invariant order on a free $\mathbb ZQ$-module.

\begin{lemma}
	\label{liftOrders}
	Let $M$ be a finitely generated $\mathbb ZQ$-module and $M\cong F_k/S$ where $F_k$ is a free $\mathbb ZQ$-module. If $M$ is $Q$-orderable, then for every $Q$-invariant order $\leqslant$ on $M$ there exists a $Q$-invariant order $\widetilde \leqslant$ such that $\widetilde \leqslant$ is the lexicographic order leading by the quotient with respect to $S$ and $(M,\leqslant)$. In particular, $S$ is convex under $\widetilde \leqslant$, and $\widetilde \leqslant$ is called a lift of $\leqslant$ to $F_k$. 
\end{lemma}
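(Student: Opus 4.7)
The plan is to construct $\widetilde{\leqslant}$ as the standard lexicographic extension of a $Q$-invariant order on $S$ by the given order $\leqslant$ on $M$. The first step is to exhibit a $Q$-invariant order on $S$ itself. This is available for free: the paper has already observed that $F_k$ is $Q$-orderable (via its embedding as the base group of $\mathbb{Z}^k \wr \mathbb{Z}^n$), and since $S$ is a $\mathbb{Z}Q$-submodule of $F_k$, the restriction of any $Q$-invariant order on $F_k$ to $S$ is again a $Q$-invariant order. Fix such an order $\leqslant_S$, and write $P_S \subseteq S$ and $P_M \subseteq M$ for the positive cones of $\leqslant_S$ and $\leqslant$ respectively.

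Next I would define the candidate positive cone on $F_k$ by
\[
\widetilde P \;:=\; \pi^{-1}(P_M) \,\cup\, P_S,
\]
where $\pi: F_k \to M$ is the canonical projection, and verify that $\widetilde P$ is the positive cone of a $Q$-invariant order on $F_k$. Totality splits cleanly on whether $\pi(f)$ vanishes in $M$: if not, $f$ lies in exactly one of $\pi^{-1}(P_M)$ or $-\pi^{-1}(P_M)$; if yes, then $f \in S$ and the trichotomy of $\leqslant_S$ applies. Closure under addition is a short case analysis on whether each summand projects into $P_M$ or lies in $S$, using that $P_M, P_S$ are sub-semigroups and that $\pi$ is a homomorphism. $Q$-invariance of $\widetilde P$ is immediate from the $Q$-equivariance of $\pi$ together with the $Q$-invariance of $P_M$ and $P_S$.

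Finally, by construction $f \in \widetilde P$ holds precisely when either $\pi(f) > 0$ in $M$, or $\pi(f) = 0_M$ and $f >_S 0$, which is exactly the recipe for the lexicographic order leading by the quotient along $0 \to S \to F_k \to M \to 0$. Convexity of $S$ under $\widetilde{\leqslant}$ then follows at once: if $s_1 \widetilde{\leqslant} f \widetilde{\leqslant} s_2$ with $s_1, s_2 \in S$, then $0_M = \pi(s_1) \leqslant \pi(f) \leqslant \pi(s_2) = 0_M$ in $M$, forcing $\pi(f) = 0_M$ and hence $f \in S$.

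I do not foresee any genuine obstacle; the argument is entirely formal once the $Q$-orderability of $S$ is in place. The only place that rewards a moment's care is the addition-closure check when one summand lies in $S$ while the other projects into $P_M$, where one uses that adding an element of $\ker \pi$ leaves the image in $M$ unchanged, so the sum again lies in $\pi^{-1}(P_M) \subseteq \widetilde P$.
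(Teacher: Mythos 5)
Your proposal is correct and follows exactly the paper's argument: the paper also obtains a $Q$-invariant order on $S$ by restricting one from $F_k$ and forms the lexicographic order leading by the quotient, leaving the verification as "straightforward." You have simply written out the routine checks that the paper omits.
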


\begin{proof}
	Since $F_k$ is $Q$-orderable, so is $S$. We pick a $Q$-invariant order on $S$ and form a lexicographic order $\tilde \leqslant$ with respect to the order on $S$ and $(M,\leqslant)$. The rest is straightforward.
\end{proof}

Let $M_n$ be the free metabelian group of rank $n$ for $n>2$. Let $\{a_1,a_2,\dots,a_n\}$ be the canonical free generators of $M_n$ and $Q$ be the free abelian group with basis $\{x_1,x_2,\dots,x_n\}$. Then it is not hard to check that $M_n$ satisfies Jacobi's identities
\[[a_i,[a_j,a_k]][a_j,[a_k,a_i]][a_k,[a_i,a_j]]=1, \forall i,j,k\in \{1,2,\dots,n\}.\]
Let $D_n$ be a $\mathbb ZQ$-module with the module presentation
\begin{equation}
\label{presentationDn}
	D_n=\langle e_{ij}, 1\leqslant i<j\leqslant n \mid J(i,j,k)=0, \forall 1\leqslant i<j<k\leqslant n\rangle,\tag{$*$}
\end{equation}
where 
\[J(i,j,k)=(1-x_i)e_{jk}-(1-x_j)e_{ik}+(1-x_k)e_{ij}.\]

Let $\iota:M_n' \to D_n$ be a $\mathbb ZQ$-module homomorphism defined by 
\[\iota([a_i,a_j])=e_{ij}, \forall i<j, \iota([a_i,a_j]^{a_k})=x_k \cdot e_{ij}, \forall i<j, \forall k.\]
It has been shown that $\iota$ is an isomorphism (See \cite{bachmuth1965AutomorphismsFreeMetabelian}, \cite{Groves1986}).

Every bi-invariant order $\leqslant$ on $M_n$ induces a $Q$-invariant order $\prec$ on $D_n$. Let $F$ be the free $\mathbb ZQ$-module generated by $\{e_{ij} \mid 1\leqslant i<j\leqslant n\}$. In addition, let $J$ be the submodule generated by all Jacobi identities $\{J(i,j,k)\mid 1\leqslant i<j<k\leqslant n\}$. Thus, $D_n\cong F/J$ and the quotient map is denoted by $\rho: F\to D_n$. We lift $\prec$ to a $Q$-invariant order $\widetilde \prec$ on $F$ under which $J$ is convex.

For each element $g\in M_n$, let 
\[S_g=\{f\in D_n \mid \iota^{-1}(f)<|g|\}.\]

We denote by $B_1=\mathrm{Max}(\{e_{ij} \mid 1\leqslant i<j\leqslant n\})$ with respect to $\gg$. Recall that for a subset $S$ of an ordered abelian group $(A,\leqslant)$, we define 
\[\mathrm{Max}(S)=\{s\in S \mid s\gg t \text{ or } s \sim t, \forall t\in S\}.\]
\begin{lemma}
\label{crucialLemma01}
	Let $e_{ij}\in B_1$. If $Q_{e_{ij}}=Q$, Then 
	\[S_{a_i^{l}a_j^{m}}=D_n, \forall (l,m)\in \mathbb Z\times \mathbb Z-(0,0).\]
\end{lemma}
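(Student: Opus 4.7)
The plan is to adapt the strategy of the proof of \cref{rank2} to the module $D_n$. Let $S := S_{a_i^l a_j^m}$. Since $(l,m)\neq(0,0)$ forces $a_i^l a_j^m\notin M_n'$, we have $|a_i^l a_j^m|>1$, so $0\in S$; moreover $S$ is downward-closed in $(D_n,\prec)$ (if $f_1\prec f_2\in S$, then $\iota^{-1}(f_1)<\iota^{-1}(f_2)<|a_i^l a_j^m|$), so it suffices to produce elements of $S$ with $\prec$-value unbounded above. Conjugating the defining inequality $\iota^{-1}(h)<|a_i^l a_j^m|$ in $M_n$ by $a_k^{\pm 1}$ and by $h''\in M_n'$ (which commutes with $\iota^{-1}(h)$ since $M_n'$ is abelian), and using the metabelian commutator identities of \cref{commutatorFormula}, I derive two closure operations on $S$: operation $(A_k)$, namely $h\in S \Rightarrow x_k h-c_k\in S$, where $c_k:=\iota([a_i^l a_j^m,a_k])$ is explicitly computable (for example, assuming $i<j$: $c_j=x_j^m(1+x_i+\cdots+x_i^{l-1})e_{ij}$ and $c_i=-(1+x_j+\cdots+x_j^{m-1})e_{ij}$), together with $(A_k^{-1})$; and operation $(B)$, namely $h\in S \Rightarrow h+(x_i^l x_j^m-1)g\in S$ for every $g\in D_n$, using $\iota([a_i^l a_j^m,h''])=(1-x_i^l x_j^m)\iota(h'')$. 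These are the higher-rank analogues of constructions $(2)$--$(5)$ in the proof of \cref{rank2}.

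Via \cref{liftOrders}, I lift $\prec$ to a $Q$-invariant order $\widetilde\prec$ on the free $\mathbb ZQ$-module $F$ generated by $\{e_{pq}\}$, so that $J$ is convex and $\tilde S:=\rho^{-1}(S)$ is downward-closed in $(F,\widetilde\prec)$. Because $e_{ij}\in B_1$ is $\gg$-maximal and $Q_{e_{ij}}=Q$, after reordering the basis so that $e_{ij}$ comes first and adjusting signs, \cref{homomorphismToReals} yields a homomorphism $\varphi:F\to\mathbb R$ with $\varphi(q\cdot e_{pq})=\CI(e_{ij},q\cdot e_{pq};\widetilde\prec)$ and $\varphi^{-1}((0,\infty))\subset P_{\widetilde\prec}$. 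By \cref{propositionComparisonIndex}(iv), the function $\psi(q):=\varphi(q\cdot e_{ij})$ defines a multiplicative homomorphism $\psi:Q\to\mathbb R_{>0}$ (taking positive values since $Q_{e_{ij}}=Q$), and a direct computation with comparison indices shows $\varphi(q\cdot f)=\psi(q)\varphi(f)$ for all $q,f$. Thus proving $\tilde S=F$ reduces to constructing elements of $\tilde S$ whose $\varphi$-values tend to $+\infty$.

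The argument then splits on $\lambda:=\psi(x_i)^l\psi(x_j)^m$. If $\lambda\neq 1$, apply $(B)$ with $h=0$ and $g=n\cdot e_{ij}$ to obtain $(x_i^l x_j^m-1)n\,e_{ij}\in S$ of $\varphi$-value $n(\lambda-1)$, which is unbounded as $n$ ranges over $\mathbb Z$ with an appropriate sign. If $\lambda=1$ and $l\neq 0$, iterate $(A_j^{\pm 1})$ from $h=0$: the $n$-th iterate is an explicit $Q$-linear combination $\pm\sum_{s}\psi(x_j)^{\pm s}\cdot c_j$, whose $\varphi$-value equals a geometric sum in $\psi(x_j)^{\pm 1}$ times the nonzero scalar $\varphi(c_j)=\psi(x_j)^m\cdot\frac{\psi(x_i)^l-1}{\psi(x_i)-1}$ (interpreted as $l\psi(x_j)^m$ when $\psi(x_i)=1$). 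By choosing the iteration direction compatible with whether $\psi(x_j)$ is at most $1$ or greater than $1$, the geometric sum diverges, giving $|\varphi|\to\infty$. When $\lambda=1$ and $l=0$ (forcing $m\neq 0$ and $\psi(x_j)=1$), the symmetric construction using $(A_i^{\pm 1})$ and $c_i$ applies.

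The main obstacle will be controlling the sign of divergence in the $\lambda=1$ cases, since the iteration direction forced by the convergence condition on $\psi(x_j)$ may not coincide with the direction needed to achieve $\varphi\to+\infty$ (rather than $-\infty$). Here I would combine iterations in both directions, interleaving with applications of $(B)$ (which, although $\varphi$-preserving when $\lambda=1$, alter lower-order terms and can shift the starting point of the iteration), and, in the fully degenerate subcase $\psi\equiv 1$ on the subgroup generated by $x_i,x_j$, fall back on a linear (rather than geometric) growth analysis analogous to the treatment of $r=1$ in the rank-two proof, where the nonvanishing of $\varphi(c_j)$ or $\varphi(c_i)$ still yields unboundedness. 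Once elements $f_n\in\tilde S$ with $\varphi(f_n)\to+\infty$ are produced, for any $g\in F$ we pick $n$ with $\varphi(f_n-g)>0$, so $f_n\widetilde\succ g$ and $g\in\tilde S$ by downward-closure; therefore $\tilde S=F$ and $S=D_n$, proving the lemma.
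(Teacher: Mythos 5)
Your overall architecture coincides with the paper's: lift to the free module via \cref{liftOrders}, extract the homomorphism $\varphi$ from \cref{homomorphismToReals} using the hypotheses $e_{ij}\in B_1$ and $Q_{e_{ij}}=Q$, derive closure operations on $\widetilde S$ from conjugation, and finish by exhibiting elements of $\widetilde S$ with $\varphi$-value tending to $+\infty$. Your $\lambda\neq 1$ case, handled by conjugating by elements of $M_n'$ (your operation $(B)$), is a clean shortcut the paper does not use in this lemma; the paper instead conjugates only by powers of $a_j^{\pm 1}$ and exhibits the two explicit families $(1+x_1)^{l-1}x_2^{-k}(1+x_2)^{k-1}e_{12}$ and $(1+x_1)^{l-1}\bigl((1+x_2)^{k-1}-x_2^{k-1}-(1+x_2)^{k-2}\bigr)e_{12}$, asserting that one of them always has divergent $\varphi$-value.

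The gap is your $\lambda=1$ case, and it is not a sign nuisance that ``interleaving'' both iteration directions can cure. Set $\gamma(v)=\varphi\bigl(\iota([a_i^la_j^m,v])\bigr)$ for $v\in M_n$. The identity $[u,vw]=[u,w][u,v]^w$ gives $\gamma(vw)=\gamma(w)+\psi(\bar w)\gamma(v)$, and when $\lambda=1$ one checks $\gamma$ descends to $Q$; comparing $\gamma(x_jx_k)$ with $\gamma(x_kx_j)$ then forces $\gamma(q)=(\psi(q)-1)D$ for a \emph{single} constant $D$ whenever $\psi\not\equiv 1$ on $Q$ (in your notation $D=\varphi(c_j)/(\psi(x_j)-1)$ if $\psi(x_j)\neq 1$). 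Consequently every conjugation operation acts on $\varphi$-values as $w\mapsto \psi(q)\,w$ where $w=\varphi(f)-D$, and operation $(B)$ is $\varphi$-neutral since $\lambda=1$. Starting from $f=0$, i.e.\ $w=-D$, no composition of these operations in any order changes the sign of $w$; so if $D>0$ every reachable element has $\varphi$-value strictly below $D$, and the scheme ``produce $f_n\in\widetilde S$ with $\varphi(f_n)\to+\infty$'' provably cannot be completed from these moves alone. This regime is nonvacuous (e.g.\ $(l,m)=(1,0)$, $\psi(x_i)=1$, $\psi(x_j)>1$ gives $\lambda=1$ and $D=1/(\psi(x_j)-1)>0$), and your proposed fallbacks do not reach it: the ``linear growth'' analysis applies only when $\psi\equiv 1$ on all of $Q$ (the cocycle identity above shows $\psi\equiv 1$ on $\langle x_i,x_j\rangle$ with $l\neq0$ already forces this), and in that genuinely degenerate case your argument does work since $\gamma(a_j)=l\varphi(e_{ij})\neq 0$. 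For the remaining regime you must either import the paper's explicit correction-term computation from \cref{commutatorFormula} and verify its divergence claim, or bring in information about the order beyond the single homomorphism $\varphi$ (e.g.\ the second-layer order on $\ker\varphi$). As written, the proposal does not close this case.
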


\begin{proof}
	We lift everything to the free $\mathbb ZQ$-module $(F,\widetilde \prec)$. It is enough to show that 
	\[\widetilde S_{a_i^l a_j^m}=\{f \in F \mid \iota^{-1}(\rho(f))<|a_i^l a_j^m|\}=F.\]
	
	Since $Q_{e_{ij}}=Q$, then $e_{ij} \sim x_k e_{ij}$ for all $k$. WLOG, we assume $e_{12}\in B_1$, $e_{12}\ \widetilde \succ\ 0$, and $Q_{e_{12}}=Q$. First, we notice that 
	\[e_{12} \gg qe_{ij}, \forall e_{ij}\notin B_1, q\in Q.\]
	Since $q e_{12} \sim e_{12}$ and $q e_{12}\gg q e_{ij}$.
	Let $\varepsilon_{ij} \in \{\pm 1\}$ be the unique element such that $\varepsilon_{ij} e_{ij}\succ 0$. Therefore, by \cref{homomorphismToReals} there exists a homomorphism $\varphi: F \to \mathbb R$, where 
	\[\varphi(q e_{ij})=\varepsilon_{ij}\CI(e_{12},\varepsilon_{ij} q e_{ij};\widetilde \prec),\forall q\in Q,\]
	such that $\varphi^{-1}((0,\infty))\subset P_{\widetilde \prec}$. 
	
	To simplify the proof, we will assume that $a_1^l a_2^m>1_{M_n}$ and $l>0$. Other cases are similar.
	
	Note that for $g\in M_n'$ 
	\[g<a_1^l a_2^m \iff g^{a_2^k} < (a_1^l a_2^m)^{a_2^k}=a_1^{la_2^k}a_2^m \iff g^{a_2^{-k}} < a_1^{la_2^{-k}}a_2^m, \forall k \in \mathbb N.\]
	Thus, if $f\in \widetilde S_{a_1}$, then 
	\begin{enumerate}
		\item $x_2^{k}f-(1+x_1)^{l-1}(1+x_2)^{k-1}e_{12}\in \widetilde S_{a_1},$ for $k\in \mathbb N$,
		\item $x_2^{-k}f+(1+x_1)^{l-1}x_2^{-k}(1+x_2)^{k-1}e_{12} \in \widetilde S_{a_1},$ for $k\in \mathbb N$.
	\end{enumerate}
	Similar to the proof of \cref{rank2}, we have
	\[(1+x_1)^{l-1}x_2^{-k}(1+x_2)^{k-1}e_{12}\in \widetilde S_{a_1}, (1+x_1)^{l-1}((1+x_2)^{k-1}-x_2^{k-1}-(1+x_2)^{k-2})e_{12} \in \widetilde S_{a_1}, \forall k\in \mathbb N.\]
	Thus, for any real number $t$, there exists $g\in \widetilde S_{a_1}$ such that $\varphi(g)>t$. Hence, $\widetilde S_{a_1^l a_2^m}=F$ for $(l,m)\neq (0,0)$. The lemma follows immediately.
\end{proof}

Let $\pi: M_n\to M_n/M_n' \cong Q$ be the canonical quotient map.
\begin{lemma}
\label{crucialLemma02}
	If $e_{ij}\in B_1$ and $q e_{ij}\not\sim e_{ij}$, then 
	\[S_g=D_n,\]
	where $\pi(g)=q$.
\end{lemma}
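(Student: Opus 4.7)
The plan is to lift the $Q$-invariant order on $D_n$ to the free $\mathbb{Z}Q$-module $F$ via \cref{liftOrders} and show that $\widetilde S_g := \{f \in F : \iota^{-1}(\rho(f)) < |g|\}$ exhausts $F$, which is equivalent to $S_g = D_n$. Since $\CI(e_{ij}, q e_{ij}) = 0$ is equivalent to $\CI(e_{ij}, q^{-1} e_{ij}) = \infty$ by the $Q$-equivariance in \cref{propositionComparisonIndex}(iv), by replacing $g$ with $g^{-1}$ if necessary I may simultaneously assume $g > 1$ and $q e_{ij} \gg e_{ij}$; the reverse case $q e_{ij} \ll e_{ij}$ is then symmetric.

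First I would record the basic structural features of $\widetilde S_g$: it contains $0$ and the submodule $J$ (because $g > 1$ and $\rho(J) = 0$); it is convex in $F$ (because the lift $\widetilde\prec$ is lexicographic with $J$ convex, so $\rho$-preimages of convex subsets of $D_n$ are convex in $F$); and it is $q^{\pm 1}$-invariant, since $h < g$ is preserved under conjugation by $g^{\pm 1}$, which on $D_n$ is multiplication by $q^{\pm 1}$. Next I would produce seed elements by conjugating the inequality $h < g$ by each generator $a_k$: the identity $g^{a_k} = g \cdot [g, a_k]$ yields $h^{a_k}[g, a_k]^{-1} < g$, which in module form reads ``if $f \in \widetilde S_g$ then $x_k f - \widetilde{\iota([g, a_k])} \in \widetilde S_g$'' for any chosen lift $\widetilde{\iota([g, a_k])}$. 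Setting $f = 0$ places $-\widetilde{\iota([g, a_k])}$ in $\widetilde S_g$ for every $k$; by the metabelian commutator formula of \cref{commutatorFormula}, each $\iota([g, a_k])$ is an explicit $\mathbb{Z}Q$-combination of the $e_{\ell k}$ with $\ell \in \supp(g)$.

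The heart of the proof is to exhibit an element $e \in \widetilde S_g$ whose basis expansion in $F$ contains a nonzero term $c \cdot q' e_{k_0 l_0}$ with $e_{k_0 l_0} \in B_1$. Let $I_{B_1} = \bigcup_{e_{pq} \in B_1}\{p, q\}$; when $\supp(g) \cap I_{B_1} \neq \emptyset$ this is immediate, since one can choose $k \in I_{B_1}$ and $\ell \in \supp(g)$ so that $e_{\ell k} \in B_1$ appears in $\iota([g, a_k])$. In the edge case $\supp(g) \cap I_{B_1} = \emptyset$, I would iterate the seed-production together with convexity and the absorption of $J$, and then invoke the Jacobi relations $J(p, r, s) \equiv 0 \pmod{J}$ from \eqref{presentationDn} to re-express a combination of the generated $e_{\ell k}$-elements into one with an explicit $B_1$-component.

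Once such $e$ is in hand, $q^{\pm 1}$-closure places $\{q^N e\}_{N \in \mathbb{Z}} \subset \widetilde S_g$. Combining $q e_{ij} \gg e_{ij}$ with $e_{k_0 l_0} \sim e_{ij}$ via parts (iii) and (iv) of \cref{propositionComparisonIndex} yields $q e_{k_0 l_0} \gg e_{k_0 l_0}$, so $q^N e_{k_0 l_0}$, and hence the leading term of $q^N e$ in the sense of \cref{leadingTermDominates}, grows unboundedly under $\widetilde \prec$. Arguing as in the closing paragraphs of the proof of \cref{crucialLemma01}, the convexity of $\widetilde S_g$ together with a symmetric treatment of the negative direction forces $\widetilde S_g = F$, that is $S_g = D_n$. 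The main obstacle is the Jacobi step in the edge case $\supp(g) \cap I_{B_1} = \emptyset$: the seeds $\iota([g, a_k])$ carry only $e_{\ell k}$-components with $\ell \notin I_{B_1}$, so producing a genuine $B_1$-component requires careful use of the Jacobi congruences together with the convexity structure of $\widetilde S_g$ to land the right combination inside $\widetilde S_g$ modulo $J$.
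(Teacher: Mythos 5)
There is a genuine gap, and it stems from missing the one observation that makes the paper's proof work. The paper conjugates the inequality $h_1<g$ by an \emph{arbitrary} element $h_2$ of the derived subgroup $M_n'$: since $h_1^{h_2}=h_1$ and $g^{h_2}=g[g,h_2]$ with $\iota([g,h_2])=(1-q)\iota(h_2)$, this yields the closure property that $f\in\widetilde S_g$ implies $f+(1-q)h\in\widetilde S_g$ for \emph{every} $h\in F$. That universal quantifier over $h$ is the whole point: given an arbitrary target $h\in F$, one picks $q_0$ with $q_0e_{ij}\gg\supp h$ (possible because $e_{ij}\in B_1$), puts $(1-q)(\mp q_0e_{ij})$ into $\widetilde S_g$, and observes that its leading term is $\pm qq_0e_{ij}$ (or $\pm q_0e_{ij}$ in the case $e_{ij}\gg qe_{ij}$), which dominates $h$ by \cref{leadingTermDominates}; convexity then gives $h\in\widetilde S_g$. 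Your seeds, by contrast, come only from conjugation by the generators $a_k$ and by powers of $g$, which produce a fixed countable family of elements depending on $g$.

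This restriction breaks your argument in two places. First, as you concede, when $\supp(g)\cap I_{B_1}=\emptyset$ you have no concrete way to manufacture a $B_1$-component; the appeal to ``iterating seed-production'' plus Jacobi relations is not a proof, and it is precisely the hard case. Second, and more fundamentally, even granting an element $e\in\widetilde S_g$ with a nonzero $B_1$-component, the family $\{q^Ne\}_{N\in\mathbb Z}$ need not be cofinal in $(F,\widetilde\prec)$: the relation $qe_{ij}\gg e_{ij}$ only says the orbit $\{q^Ne_{ij}\}$ forms a strictly increasing chain of Archimedean classes, not that this chain exhausts them. If $Q/\!\sim_{e_{ij}}$ has rank at least $2$ with $q$ lying below the top of the induced order $\ll$, there are elements $q''e_{ij}\gg q^Ne_{ij}$ for all $N$, and such elements are never reached by your construction. (There is also an unaddressed sign issue: the leading coefficient of your seed $-\iota([g,a_k])$ is whatever it is, and if it is negative the elements $q^Ne$ run off to $-\infty$, which is useless for showing that a large positive $h$ lies in the down-set $\widetilde S_g$.) The fix is not to patch these steps but to replace the seed mechanism by conjugation over all of $M_n'$, as in the paper.
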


\begin{proof}
	Again, we lift everything to $(F,\tilde \prec)$. We assume $g>1_{M_n}$. Then for $h_1,h_2\in M_n'$, we have
	\[h_1<g \iff h_1^{h_2}< g^{h_2}.\]
	Thus, if $f\in \tilde S_{g}$ then 
	\[f+(1-q)h\in \tilde S_g, \forall h\in F.\]
	Here we use the property of metabelian groups that $g^{h_1}=g^{h_2}$ as long as $\pi(h_1)=\pi(h_2)$ in a metabelian group. Thus, $\iota(h_2^g)=q\cdot \iota(h_2).$
	
	Fix an arbitrary $h\in F$. WLOG, we suppose $e_{ij}\ \widetilde \succ \ 0$. Since $e_{ij}\in B_1$, there exists $q_0$ such that $q_0 e_{ij}\gg \supp{h}.$ If $q e_{ij}\gg e_{ij}$. Then we have
	\[(1-q)(-q_0 e_{ij})\in \widetilde S_g.\]
	Note that the leading term of $(1-q)(-q_0 e_{ij})-h$ is $qq_0 e_{ij}$. Thus, by \cref{leadingTermDominates},
	\[(1-q)(-q_0 e_{ij})-h\ \tilde \succ\  0 \Rightarrow (1-q)(-q_0 e_{ij}) \ \tilde \succ\  h \Rightarrow h\in \widetilde S_g.\]
	Similarly, if $e_{ij}\gg q e_{ij}$ we have
	\[(1-q)(q_0 e_{ij})\in \widetilde S_g,\]
	where the leading term of $(1-q)q_0 e_{ij}-h$ is $q_0 e_{ij}$. Thus, in this case $h\in \widetilde S_g$.
	
	Since the choice of $h$ is arbitrary, we have that $\tilde S_g=F$. 
\end{proof}

\begin{lemma}
\label{crucialLemma03}
	If $e_{ij}\in B_1$ and 
	\[x_i^{k}e_{ij} \text{ or }x_j^{k}e_{ij}\gg \mathrm{Max}\{x_k^{\pm 1} e_{ij} \mid k=1,2,\dots,n, e_{ij}\in B_1\}, \text{ for some } k\in \mathbb Z,\]
	then 
	\[S_{a_i^{l}a_j^{m}}=D_n, \forall (l,m)\in \mathbb Z\times \mathbb Z-(0,0).\]
\end{lemma}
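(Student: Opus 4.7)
The strategy mirrors \cref{crucialLemma01}. Lift $\prec$ to a $Q$-invariant order $\widetilde\prec$ on $F$ via \cref{liftOrders}; it suffices to show
\[\widetilde S_{a_i^l a_j^m}:=\{f\in F\mid \iota^{-1}(\rho(f))<|a_i^l a_j^m|\}=F,\]
since then projecting by $\rho$ gives $S_{a_i^l a_j^m}=D_n$. By the symmetry between the two alternatives in the hypothesis and after replacing some of the $a_i$ by their inverses if necessary, I may assume $i=1$, $j=2$, that $a_1^l a_2^m>1_{M_n}$, and that $x_1^k e_{12}$ is the dominating element. Because $\widetilde S_{a_1^l a_2^m}$ is convex and $\widetilde\prec$-downward closed and contains $0$, the task reduces to producing in $\widetilde S_{a_1^l a_2^m}$ elements that $\widetilde\prec$-exceed any prescribed element of $F$.

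The closure rule comes from conjugating $a_1^l a_2^m$ by $a_1^N$: applying \cref{commutatorFormula} gives $(a_1^l a_2^m)^{a_1^N}=a_1^l a_2^m\cdot [a_2^m,a_1^N]$, with $\iota([a_1^N,a_2^m])=(1+x_1)^{N-1}(1+x_2)^{m-1}e_{12}$. Bi-invariance of $\leqslant$ then yields: if $f\in \widetilde S_{a_1^l a_2^m}$, so is $x_1^N f+(1+x_1)^{N-1}(1+x_2)^{m-1}e_{12}$ for every $N\in\mathbb N$, with an analogous rule for $N<0$ obtained from $[a_1^{-N},a_2^m]=[a_1^N,a_2^m]^{-a_1^{-N}}$. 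Setting $f=0$ produces the polynomials $g_N:=(1+x_1)^{N-1}(1+x_2)^{m-1}e_{12}\in \widetilde S_{a_1^l a_2^m}$ for every $N$ of the appropriate sign dictated by $k$.

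The hypothesis, specialised to $l=1$, $e_{pq}=e_{12}$, together with \cref{propositionComparisonIndex}(iii),(iv), forces the comparison index $\CI(e_{12},x_1 e_{12})$ to be $0$ or $\infty$; in either case the powers $x_1^N e_{12}$ form a strictly $\gg$-monotone sequence in $|N|$ (in the appropriate direction) that eventually $\gg$-dominates every element of $F$. By \cref{leadingTermDominates}, for $|N|$ sufficiently large the $\widetilde\prec$-leading term of $g_N$ is a positive $\mathbb Z$-combination of monomials lying in the $\sim$-class of $x_1^{N-1}e_{12}$, so $g_N\,\widetilde\succ\, h$ for any fixed $h\in F$. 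Hence $\widetilde S_{a_1^l a_2^m}=F$ and the lemma follows. The main obstacle is precisely this leading-term analysis: one must use the $\mathrm{Max}$-structure of the support of $g_N$, the hypothesis, and \cref{dotProduct} to confirm that no cross-term $x_1^i x_2^j e_{12}$ with $i<N-1$ can $\gg$-dominate $x_1^{N-1}e_{12}$ in the expansion, and to handle the separate case $k<0$, where one conjugates by $a_1^{-N}$ rather than $a_1^N$.
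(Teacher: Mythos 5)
Your overall strategy is the paper's: lift to $(F,\widetilde\prec)$, conjugate $a_1^l a_2^m$ by powers of $a_1$ to manufacture elements of $\widetilde S_{a_1^l a_2^m}$ with non-negative binomial coefficients, and use the domination hypothesis together with \cref{leadingTermDominates} to show these elements are cofinal in $F$. However, there are two genuine gaps. First, the subcase $m=0$ (the conclusion for $a_1^l$, $l\neq 0$) is not covered by your construction: the closure rule comes from $[a_1^N,a_2^m]$, whose image is $(1+x_1)^{N-1}(1+x_2)^{m-1}e_{12}$, and this commutator is trivial when $m=0$, so setting $f=0$ produces nothing; conjugating by $a_2^N$ instead produces elements whose dominant monomials involve powers of $x_2$, over which the hypothesis gives no control. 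The paper disposes of this case separately: the hypothesis forces $x_1 e_{12}\not\sim e_{12}$, hence $x_1^l e_{12}\not\sim e_{12}$ by \cref{propositionComparisonIndex}(iii), and then \cref{crucialLemma02} (conjugation by elements of $M_n'$, i.e.\ the rule $f\mapsto f+(1-x_1^l)h$) gives $S_{a_1^l}=D_n$. You need that step.

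Second, when negative powers of $x_1$ dominate (your case $k<0$, i.e.\ $e_{12}\gg x_1 e_{12}$), setting $f=0$ in the rule for conjugation by $a_1^{-N}$ yields $-x_1^{-N}(1+x_1)^{N-1}(1+x_2)^{m-1}e_{12}$, all of whose coefficients are non-positive; this is a very negative element and cannot dominate an arbitrary $h$. So "$g_N$ for $N$ of the appropriate sign" is not what the construction produces. The fix — and what the paper does — is to iterate: feed the output of the positive rule into the negative rule, e.g.\ applying the $a_1^{-N}$ rule to $(1+x_1)^{N}(1+x_2)^{m-1}e_{12}$ gives $x_1^{-N}\bigl[(1+x_1)^{N}-(1+x_1)^{N-1}\bigr](1+x_2)^{m-1}e_{12}=x_1^{1-N}(1+x_1)^{N-1}(1+x_2)^{m-1}e_{12}$, which has all coefficients positive and dominant monomials involving $x_1^{1-N}$. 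Finally, a smaller correction to your "main obstacle": the leading term of $g_N$ need not lie in the $\sim$-class of $x_1^{N-1}e_{12}$ (if $x_2 e_{12}\gg e_{12}$ it lies in the class of $x_1^{N-1}x_2^{m-1}e_{12}$), and one cannot in general prove that no cross-term dominates $x_1^{N-1}e_{12}$; but this does not matter, since whatever $\mathrm{Max}(\supp g_N)$ is, its coefficients are positive binomials and its class is $\succeq x_1^{N-1}e_{12}$, which is all that \cref{leadingTermDominates} requires.
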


\begin{proof}
	We lift everything to $(F,\tilde \prec)$ as above. WLOG, we assume $e_{12}\in B_1$ and $x_1^k e_{12}\gg \mathrm{Max}\{x_k^{\pm 1} e_{ij} \mid k=1,2,\dots,n, e_{ij}\in B_1\}.$ Since there exists $k$ such that $x_1^k e_{12}\gg \mathrm{Max}\{x_k^{\pm 1} e_{ij} \mid k=1,2,\dots,n, e_{ij}\in B_1\},$ then $Q_{e_{12}}\neq Q$. Thus, $S_{a_1^l}=D_n$ for all $l\in \mathbb Z-\{0\}$ by \cref{crucialLemma02}.
	
	The only remaining part is to prove $S_{a_1^l a_2^m}=D_n$ for $m\neq 0$. WLOG, we assume that $m>0$. As the discussion in \cref{crucialLemma01}, we have
	\[g\leqslant a_1^l a_2^m\iff g^{a_1^s}\leqslant a_1^la_2^{ma_1^s} \iff g^{a_1^{-s}}\leqslant a_1^la_2^{ma_1^{-s}}, \forall s\in \mathbb N.\]
	Thus, we have that if $f\in \widetilde S_{a_2}$, then
	\begin{enumerate}
		\item $x_1^s f+(1+x_1)^{s-1}(1+x_2)^{m-1}e_{12}\in \widetilde S_{a_2}$ for $s\in \mathbb N$,
		\item $x_1^{-s}f-x_1^{-s}(1+x_1)^{s-1}(1+x_2)^{m-1}e_{12}\in \widetilde S_{a_2}$ for $s\in \mathbb N$.
	\end{enumerate}
	Using those constructions, we can deduce that
	\begin{align*}
		0\in \widetilde S_{a_2}&\xrightarrow{\text{by (1)}} (1+x_1)^{s-1} (1+x_2)^{m-1} e_{12}\in \widetilde S_{a_2} \\
		&\xrightarrow{\text{by (2) for }s=1} (x_1^{-1}(1+x_1)^{s-1}-x_1^{-1}) (1+x_2)^{m-1} e_{12}\in \widetilde S_{a_2} \\
		&\xrightarrow{\text{by (2) for }s-1} (x_1^{-s}(1+x_1)^{s-1}-x_1^{-s}-x_1^{-(s-1)}(1+x_1)^{s-2}) (1+x_2)^{m-1} e_{12} \in \widetilde S_{a_2}.
	\end{align*}
	Since $(1+x_1)^{s-1} (1+x_2)^{m-1} e_{12}\in \widetilde S_{a_2},$ and $ (x_1^{-s}(1+x_1)^{s-1}-x_1^{-s}-x_1^{-(s-1)}(1+x_1)^{s-2}) (1+x_2)^{m-1} e_{12}\in \widetilde S_{a_2},$ then $x_1^{\pm s}e_{12}\in \widetilde S_{a_2}$ for all $s\in \mathbb Z$ no matter $x_2 e_{12} $ is comparable to $e_{12}$ or not. Since $x_1^k e_{12}\gg \mathrm{Max}\{x_k^{\pm 1} e_{ij} \mid k=1,2,\dots,n, e_{ij}\in B_1\}$ for some $k$ then for any choice of $q\in Q$ and $e_{ij}$ there exists $s\in \mathbb Z$ such that 
	\[q \cdot e_{ij}\ll x_1^s e_{12}\in \widetilde S_{a_1^la_2^m}.\]
	Therefore, $\widetilde S_{a_1^la_2^m}=F.$
\end{proof}

Now we are ready to prove the following theorem.
\begin{theorem}
\label{higherRank}
	For $n\geqslant 3$, the rank of $M_n/\overline{M_n'}$ is greater or equal to 2.
\end{theorem}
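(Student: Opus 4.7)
The plan is to show that $\pi(\overline{M_n'})$ has $\mathbb Z$-rank at most $n-2$ in $Q=M_n/M_n'$, where $\pi$ is the canonical projection. By \cref{inducedOrderOnQuotients}(iii), all elements of $B_1$ are $\sim$-equivalent and hence share a common stabiliser $Q_0:=Q_{e_{ij}}$ (for any $e_{ij}\in B_1$), and by \cref{crucialLemma02} one has $\pi(\overline{M_n'})\subseteq Q_0$. Thus if $\operatorname{rank}(Q_0)\leqslant n-2$ we are done; and if $Q_0=Q$, then applying \cref{crucialLemma01} to any $e_{ij}\in B_1$ shows that $\pi(a_i),\pi(a_j)$ are linearly independent in $M_n/\overline{M_n'}$, giving rank at least $2$.

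The remaining case is $\operatorname{rank}(Q_0)=n-1$. Let $\lambda:Q\to\mathbb Z$ be the surjection with $\ker\lambda=Q_0$ and set $I=\{l:\lambda(x_l)\neq 0\}$, a non-empty set. If some $e_{ij}\in B_1$ has $\{i,j\}\cap I\neq\emptyset$, then $x_i^k e_{ij}$ (or $x_j^k e_{ij}$) achieves arbitrarily large $\lambda$-value as $|k|\to\infty$ with appropriate sign, eventually dominating $\mathrm{Max}\{x_l^{\pm 1}e_{i'j'}\}$; \cref{crucialLemma03} then again supplies two linearly independent elements $\pi(a_i),\pi(a_j)$ in $M_n/\overline{M_n'}$.

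The delicate subcase is when every $e_{ij}\in B_1$ satisfies $i,j\notin I$; here I would derive a contradiction from a Jacobi relation. Fix $l\in I$ and $e_{ij}\in B_1$; both choices are possible because $n\geqslant 3$ forces $|\{1,\ldots,n\}\setminus I|\geqslant 2$. Rearranging $J(i,j,l)=0$ yields, up to sign,
\[(1-x_l)e_{ij}=\pm(1-x_j)e_{il}\mp(1-x_i)e_{jl}\quad\text{in }D_n.\]
Since $x_l\notin Q_0=Q_{e_{ij}}$, the absolute value of the left-hand side is $\sim\max(e_{ij},x_le_{ij})$, in particular at least $\sim e_{ij}$. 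The triangle inequality bounds the absolute value of the right-hand side by a constant multiple of $\max(e_{il},x_je_{il},e_{jl},x_ie_{jl})$. Because $e_{il},e_{jl}\notin B_1$, each is $\ll e_{ij}$, so the maximum must be realised by $x_je_{il}$ or $x_ie_{jl}$, forcing one of them to be $\sim$ to or $\gg e_{ij}$. Since $x_i,x_j\in Q_0=Q_{e_{ij}}$ preserve the $\sim$-class of $e_{ij}$, multiplying through by $x_i^{-1}$ (or $x_j^{-1}$) transports the inequality onto $e_{jl}$ (or $e_{il}$), contradicting $e_{jl},e_{il}\ll e_{ij}$. The main obstacle lies in this last step: organising the size estimates carefully in $D_n$, where the generators no longer form a $\mathbb ZQ$-basis but the induced order still makes comparisons meaningful, and verifying that membership of $x_i,x_j$ in $Q_{e_{ij}}$ indeed suffices to transport comparisons along $Q$-translates.
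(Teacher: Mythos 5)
Your proof is correct, and it reaches the conclusion by a genuinely different decomposition than the paper's, built from the same three lemmas. The paper always aims to exhibit an explicit pair $a_i,a_j$ with $S_{a_i^la_j^m}=D_n$; its case split is governed by which translates $x_l^{\pm1}e_{i'j'}$ realise $T=\mathrm{Max}\{x_l^{\pm1}e_{i'j'}\}$, and its Jacobi analysis sometimes yields a contradiction and sometimes redirects to \cref{crucialLemma02} or \cref{crucialLemma03} applied to a different pair such as $(a_2,a_i)$ or $(a_1,a_i)$. You instead use \cref{crucialLemma02} globally to obtain $\pi(\overline{M_n'})\subseteq Q_0$, which disposes of the case $\operatorname{rank}(Q_0)\leqslant n-2$ outright, and your stratification by the character $\lambda$ and the set $I$ arranges matters so that the residual case is purely contradictory; this buys a cleaner logical structure at the cost of not always naming the $\mathbb Z^2$ explicitly. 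The two worries you flag at the end are not obstacles. The comparison-index machinery of Section~2 (that $|a+b|\sim\max(|a|,|b|)$ when $a\not\sim b$, the triangle inequality up to $\sim$, and transitivity of $\ll$ on $\sim$-classes) is developed for arbitrary ordered abelian groups, so it applies verbatim in $D_n$; working in $D_n$, where $J(i,j,l)=0$ holds exactly, is in fact simpler than the paper's lift to $F$ with $J$ convex. And since $Q$ acts by order-preserving automorphisms, $|q\cdot m|=q\cdot|m|$ and $\CI$ is $Q$-invariant by \cref{propositionComparisonIndex}(iv), so $x_je_{il}\gtrsim e_{ij}$ does transport to $e_{il}\gtrsim x_j^{-1}e_{ij}\sim e_{ij}$, giving the contradiction. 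Two small points worth making explicit when writing this up: $Q/Q_0$ carries the bi-order $\ll_{e_{ij}}$ by \cref{inducedOrderOnQuotients}, hence is torsion-free, so in the rank-$(n-1)$ case it is genuinely $\mathbb Z$ and $\lambda$ exists; and the domination of $T$ by $x_i^ke_{ij}$ uses that for $e,e'\in B_1$ the comparison of $qe$ with $q'e'$ reduces to comparing $\lambda(q)$ with $\lambda(q')$, which follows from $e\sim e'$ together with $Q_e=Q_{e'}=Q_0$.
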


\begin{proof}
	We use the notation as above. The free metabelian group $M_n$ is generated by $\{a_1,a_2,\dots,a_n\}$, $M_n'$ is the derived subgroup of $M_n$, and $Q\cong \mathbb Z^n$ is the abelianization of $M_n$ with basis $\{x_1,x_2,\dots,x_n\}$. The canonical quotient map is denoted by $\pi:M_n\to Q$ where $\pi(a_i)=x_i$.
	
	Let $D_n$ be the $\mathbb ZQ$-module with presentation (\ref{presentationDn}). Moreover, $F$ is the free $\mathbb ZQ$-module with basis $\{e_{ij} \mid 1\leqslant i<j\leqslant n\}$ and $J$ is the submodule generated by Jacobi identities $\{J(i,j,k)\mid 1\leqslant i<j<k\leqslant n\}$. We denote by $\iota: (M_n',\leqslant)\to (D_n,\prec)$ the canonical isomorphism and by $\rho: (F,\tilde \prec)\to (D_n,\prec)$ the quotient homomorphism. And 
	\[S_g=\{f\in D_n \mid \iota^{-1}(f)<|g|\}\text{ and } \widetilde S_g= \{f\in F \mid \iota^{-1}\circ\rho(f)<|g|\}.\]
	The goal is to prove that there exist distinct $a_i,a_j$ such that 
	\[S_{a_i^{l}a_j^{m}}=D_n, \forall (l,m)\in \mathbb Z\times \mathbb Z-(0,0),\]
	or equivalently $|a_i^{l}a_j^{m}|>M_n'$ whenever $|a_i^{l}a_j^{m}|\neq 1_{M_n}$. It implies $a_i^l a_j^m>\overline M_n'$. Hence, $\mathbb Z^2=\langle x_i,x_j\rangle\subset M_n/\overline M_n'$.
	
	Let $B_1=\mathrm{Max}\{e_{ij}\mid 1\leqslant i<j\leqslant n\}$. WLOG, we assume $e_{12}\in B_1$ and $e_{12}\ \tilde \succ\  0_{F}$. If $Q_{e_{12}}=Q$, the result follows from \cref{crucialLemma01}. If not, then there exists $x_i e_{12} \not\sim e_{12}$. We have two cases.
	
	The first case is that 
	\[x_1^{k}e_{12} \text{ or } x_2^{k}e_{12}\gg T:=\mathrm{Max}\{x_l^{\pm 1} e_{ij} \mid l=1,2,\dots,n,e_{ij}\in B_1\} \text{ for some }k\in \mathbb Z.\]
	Then the result follows from \cref{crucialLemma03}. Note that if $x_1e_{12}\in T$, then $x_1^2 e_{12}\gg T$.
	
	Therefore, we only need to consider the case $x_i^{\varepsilon}e_{12}\in T$ for some $i>2, \varepsilon\in \{\pm 1\}$ and $x_i^{\varepsilon} e_{12} \gg x_1^l x_2^m e_{12}$ for all $(l,m)\in \mathbb Z\times \mathbb Z$.
	
	If $x_ie_{12}\in T$ for $i>2$, then we consider the Jacobi identity
	\[J(1,2,i)=(1-x_1)e_{2i}-(1-x_2)e_{1i}+(1-x_i)e_{12}.\]
	We notice that if $x_ie_{12}$ is lexicographically greater than every other term in $J(1,2,i)$, then $-x_ie_{12}$ is the leading term of $J(1,2,i)+e_{12}$. Thus, by \cref{leadingTermDominates}
	\[J(1,2,i)+e_{12} \ \tilde \prec\ 0_F \Rightarrow J(1,2,i) \ \tilde \prec\  -e_{12}\ \tilde \prec\ 0_F.\]
	It contradicts the fact that $J$ is convex under $\tilde \prec.$ Therefore $x_ie_{12}$ must be comparable to one of the other terms in $J(1,2,i)$. Since $Q_{e_{12}}\neq Q$ and $x_i e_{12}\in T$, then $x_ie_{12}\gg e_{12}$.
	
	If $x_ie_{12}\sim e_{1i}$ or $x_ie_{12}\sim e_{2i}$, then 
	\[e_{1i}\sim x_ie_{12}\gg e_{12} \text{ or }e_{2i} \gg e_{12},\]
	which contradicts $e_{12}\in B_1$. Thus, the only possible choice is $x_ie_{12}\sim x_1 e_{2i}$ or $x_ie_{12} \sim x_2 e_{1i}$. Consider the case where $x_i e_{12}\sim x_1 e_{2i}$. The other case is similar. The assumption implies that $e_{12}\sim e_{2i}$ otherwise $x_{1}e_{12}\gg x_1e_{2i}\sim x_ie_{12}$ contradicting to the fact that $x_i e_{12}\in T$. Therefore, $x_1 e_{12}\sim x_1 e_{2i} \sim x_i e_{12}$. It leads to a contradiction, since $x_1^{2}e_{12}\gg T$.
	
  If $x_i^{-1}e_{12} \in T$, then $e_{12}\gg x_i e_{12}$. As the discussion above, $e_{12}$ must be comparable with at least one of $e_{2i},x_1 e_{2i},e_{1i}$ and $x_2 e_{1i}$. 
  
	If $e_{12}\sim e_{2i}$, then $x_{i}^{-1}e_{12}\sim x_i^{-1}e_{2i}$. Thus, $x_{i}^{-1}e_{2i}\in T$ and hence by \cref{crucialLemma03} we have
	\[\tilde S_{a_2^la_i^m} = F, \forall (l,m)\in \mathbb Z\times \mathbb Z-(0,0).\]
	If $e_{12} \gg e_{2i}$ and $e_{12} \sim x_1 e_{2i}$, then $x_1 e_{12}\gg e_{12}$. By our assumption $x_i^{-1}e_{12}\gg x_1^k e_{12}$ for all $k\in \mathbb Z$. Therefore we have
	\[x_1^l x_i^{m} e_{12} \not\sim e_{12}, \forall (l,m)\in \mathbb Z\times \mathbb Z -(0,0)\]
	It follows from \cref{crucialLemma02} that 
	\[S_{a_i^{l}a_1^{m}}=D_n, \forall (l,m)\in \mathbb Z\times \mathbb Z-(0,0).\]
	Therefore, in this case, $\mathbb Z^{2}=\langle x_1,x_i\rangle\subset M_n/\overline M_n'$. This completes the proof.
\end{proof}

Combining \cref{rank2} and \cref{higherRank}, we immediately have:

\begin{corollary}
\label{cantorSet}
	The space $\mathcal O(M_n)$ is homeomorphic to the Cantor set for $n\geqslant 2$.
\end{corollary}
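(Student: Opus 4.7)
The plan is to verify the standard topological characterization: a nonempty, compact, metrizable, totally disconnected space without isolated points is homeomorphic to the Cantor set. The introduction already notes that $\mathcal{O}(M_n)$ is a closed subspace of $\mathcal{LO}(M_n)$, itself a closed subset of the Cantor set, so $\mathcal{O}(M_n)$ is automatically compact, metrizable, and totally disconnected, and is nonempty because $M_n$ is bi-orderable. The only real task is therefore to show that no bi-order on $M_n$ is isolated.

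Fix a bi-order $\leqslant$ on $M_n$. The first step is to produce a nontrivial convex normal subgroup $H \trianglelefteq M_n$ such that $M_n/H$ is free abelian of rank at least $2$. For $n=2$, \cref{rank2} supplies $H = M_2'$ with $M_2/H \cong \mathbb{Z}^2$. For $n \geqslant 3$ I take $H = \overline{M_n'}$, which is convex by definition; \cref{higherRank} ensures that $M_n/H$ has rank at least $2$, and being a torsion-free finitely generated quotient of $\mathbb{Z}^n$ it is isomorphic to $\mathbb{Z}^k$ for some $k \geqslant 2$. In both cases $H$ is normal because conjugation preserves both $M_n'$ and the bi-order, hence also the smallest convex set containing $M_n'$. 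Then \cref{convexToLexicographical} expresses $\leqslant$ as the lexicographic order leading by the quotient: its positive cone has the form $P_H \cup \pi^{-1}(P_{M_n/H})$, where $\pi\colon M_n \to M_n/H$ is the projection and $P_H$, $P_{M_n/H}$ are the positive cones that $\leqslant$ induces on $H$ and $M_n/H$.

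The main step is to transport the Cantor-set structure from $\mathcal{O}(M_n/H)$ to a neighbourhood of $\leqslant$ in $\mathcal{O}(M_n)$. Holding $P_H$ fixed and letting $\leqslant'$ range over $\mathcal{O}(M_n/H)$, the rule $\Phi(\leqslant') = P_H \cup \pi^{-1}(P_{\leqslant'})$ defines a map $\Phi\colon \mathcal{O}(M_n/H) \to \mathcal{O}(M_n)$. A direct check on the sub-basic sets $V_g$ shows $\Phi$ is continuous (the preimage of $V_g$ equals $V_{\pi(g)}$ when $g \notin H$, and is either the whole space or empty when $g \in H$) and injective (two distinct orders on $M_n/H$ disagree on some $\pi(g)$, and the corresponding preimages then witness the difference of their $\Phi$-images). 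By Sikora's theorem \cite{sikora2004TopologySpacesOrderings}, $\mathcal{O}(\mathbb{Z}^k)$ has no isolated points for $k\geqslant 2$; hence any open neighbourhood $U$ of $\leqslant$ pulls back under $\Phi$ to an open neighbourhood of the induced quotient order $\widetilde{\leqslant}$, which contains some $\leqslant'' \neq \widetilde{\leqslant}$, and $\Phi(\leqslant'') \in U$ differs from $\leqslant$ by injectivity. Thus $\leqslant$ is not isolated, completing the characterization. I do not anticipate any serious obstacle: \cref{rank2}, \cref{higherRank}, and \cref{convexToLexicographical} do essentially all the heavy lifting, and what remains is to bootstrap Sikora's result from the abelian quotient up to $M_n$ via the lexicographic construction.
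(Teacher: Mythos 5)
Your proposal is correct and follows essentially the same route as the paper: both reduce to showing $\mathcal O(M_n)$ has no isolated points by using the convexity of $\overline{M_n'}$ (via \cref{rank2}, \cref{higherRank}, and \cref{convexToLexicographical}) to write every bi-order lexicographically over the rank-$\geqslant 2$ free abelian quotient, and then perturbing the quotient order using Sikora's theorem. You simply spell out the details (the map $\Phi$, its continuity and injectivity, and the topological characterization of the Cantor set) that the paper's two-line proof leaves implicit.
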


\begin{proof}
	Since the rank of the free abelian group $M_n'/\overline M_n'$ is greater or equal to 2, then every bi-order is a lexicographical order with respect to the extension of $\overline M_n'$ by $\mathbb Z^k$ for $k>1$. Since the space of orders of $\mathbb Z^k$ has no isolated point, so does $\mathcal O(M_n)$. 
\end{proof}

Next, we will show that $M_n'$ is not always convex when $n>2$.

\begin{lemma}
\label{constructOrder}
	Let $M$ be a $Q$-orderable $\mathbb ZQ$-module. Suppose there exists a homomorphism $\varphi:M\to \mathbb R$ such that if $\varphi(m)\neq 0$ then $\varphi(q\cdot m)/\varphi(m)>0$ for all $q\in Q$. Then there exists a $Q$-invariant order $\leqslant$ such that $\varphi^{-1}((0,\infty))\subset P_{\leqslant}$.
\end{lemma}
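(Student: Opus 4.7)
The plan is to construct $\leqslant$ as a lexicographic order relative to the short exact sequence
\[0 \longrightarrow \ker\varphi \longrightarrow M \longrightarrow M/\ker\varphi \longrightarrow 0,\]
where on $M/\ker\varphi$ I use the order pulled back from $\mathbb R$ via $\varphi$, and on $\ker\varphi$ I use any $Q$-invariant order inherited from the $Q$-orderability of $M$.

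First I would show that $K:=\ker\varphi$ is a $\mathbb ZQ$-submodule. It is clearly an abelian subgroup, so only $Q$-invariance needs checking. The hypothesis says that whenever $\varphi(m)\neq 0$, the ratio $\varphi(q\cdot m)/\varphi(m)$ is positive; applying this with $q^{-1}$ in place of $q$ (and with $q\cdot m$ in place of $m$) yields the converse implication. Consequently $\varphi(q\cdot m)=0$ if and only if $\varphi(m)=0$, which is exactly $Q$-invariance of $K$. Since $M$ is $Q$-orderable and $K$ is a $\mathbb ZQ$-submodule, the restriction of any $Q$-invariant order on $M$ yields a $Q$-invariant order $\leqslant_K$ on $K$ with positive cone $P_K$.

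Next, define
\[P \;=\; \{\,m\in M : \varphi(m)>0\,\} \;\cup\; P_K .\]
I would verify the three properties of a positive cone for a $Q$-invariant order. \emph{Trichotomy:} for each $m\in M$, exactly one of $\varphi(m)>0$, $\varphi(m)<0$, $\varphi(m)=0$ holds, and in the last case $m\in K$ where $\leqslant_K$ provides trichotomy. \emph{Closure under addition:} a routine case split using that $\{\varphi>0\}$ is closed under addition, that $\{\varphi>0\}+K\subset\{\varphi>0\}$, and that $P_K$ is a semigroup. \emph{$Q$-invariance:} if $\varphi(m)>0$, then by the sign hypothesis $\varphi(q\cdot m)>0$, so $q\cdot m\in P$; if $m\in P_K$, then $q\cdot m\in P_K$ because $K$ is a submodule and $\leqslant_K$ is $Q$-invariant. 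By construction $\varphi^{-1}((0,\infty))\subset P$, which is exactly the required inclusion.

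I do not expect a real obstacle: the only subtle point is the observation that the one-sided sign hypothesis is actually two-sided (via $q^{-1}$), which is what makes $\ker\varphi$ into a $\mathbb ZQ$-submodule and lets the standard lexicographic construction go through. Once that is observed, the verification is entirely routine.
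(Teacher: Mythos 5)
Your proof is correct and follows essentially the same route as the paper: a lexicographic order relative to $0\to\ker\varphi\to M\to\varphi(M)\to 0$, with the order on the quotient pulled back from $\mathbb R$ and any $Q$-invariant order on the kernel. Your explicit check (via $q^{-1}$) that the one-sided sign hypothesis makes $\ker\varphi$ a $\mathbb ZQ$-submodule is a detail the paper leaves implicit, but the argument is the same.
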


\begin{proof}
	Since $M$ is $Q$-orderable, so is $\ker \varphi$. Note that by the assumption, the positive cone $P=\{\varphi(m)\mid \varphi(m)>0\}$ of $\varphi(M)$ is $Q$-invariant, where the action of $Q$ on $\varphi(M)$ is defined by
	\[q\cdot \varphi(m):=\varphi(q\cdot m).\]
	 Thus, any lexicographic order $\leqslant$ formed by the extension of $\ker \varphi$ by $\varphi(M)$ is what we are looking for.
\end{proof}

With the above lemma, we are ready to prove the following.  

\begin{theorem}
\label{notConvex}
	Let $a_1^{t_1}a_2^{t_2}\dots a_n^{t_n}$ be a non-trivial element in $M_n$ for $n>2$. Then there exists a bi-invariant order such that $a_1^{t_1}a_2^{t_2}\dots a_n^{t_n}\in \overline M_n'$. In particular, for $n> 2$ there exists a bi-order on $M_n$ such that $M_n'$ is not convex.
\end{theorem}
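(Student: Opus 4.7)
The strategy is to build the required bi-order as a lexicographic extension with respect to the natural projection $\pi: M_n \twoheadrightarrow M_{n-1}$ obtained by killing the first generator, rather than the canonical map $M_n \twoheadrightarrow Q = M_n/M_n'$. Because the kernel of $\pi$ is much smaller than $M_n'$ yet will sit below $\overline M_n'$ in this order, the designated element can be placed inside the convex hull of $M_n'$ after reducing to a normal form.

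I would begin by reducing to the case $g = a_1$. Write $g = a_1^{t_1}\cdots a_n^{t_n}$ with $(t_1,\ldots,t_n)\neq 0$, set $d = \gcd(|t_1|,\ldots,|t_n|)$, and observe that $(t_1/d,\ldots,t_n/d)$ is primitive in $Q \cong \mathbb{Z}^n$. Choose a tame automorphism $\phi \in \mathrm{Aut}(M_n)$ lifting a matrix in $\mathrm{GL}_n(\mathbb{Z})$ that sends $(t_1/d,\ldots,t_n/d)$ to the first standard basis vector; then $\phi(g) = a_1^d c$ for some $c \in M_n'$. Since $\overline M_n'$ is a subgroup containing $M_n'$, for any bi-order $\leqslant$ the condition $a_1 \in \overline M_n'$ implies $\phi(g) = a_1^d c \in \overline M_n'$, and pulling $\leqslant$ back along $\phi$ produces a bi-order placing $g$ in the convex hull of $M_n'$. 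Thus it suffices to construct one bi-order in which $a_1 \in \overline M_n'$.

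For the main construction, consider the quotient map $\pi: M_n \to M_{n-1}$ defined by $a_1 \mapsto 1$ and $a_i \mapsto a_i$ for $i \geq 2$; its kernel $K = \llangle a_1 \rrangle$ is a normal subgroup of $M_n$ containing $a_1$. Pick any bi-order $\leqslant_{n-1}$ on $M_{n-1}$ together with any bi-order $\leqslant_K$ on $K$ that is preserved under conjugation by every element of $M_n$; such a $\leqslant_K$ exists because one can simply restrict to $K$ an arbitrary bi-order on $M_n$ coming from the Magnus embedding. The lexicographic order $\leqslant$ on $M_n$ with positive cone $\{u \in M_n : \pi(u) >_{n-1} 1\} \sqcup \{u \in K : u >_K 1\}$ is then a bi-order (standard verification from the two invariance properties), and in this order $K$ is a convex subgroup.

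Finally, I would observe that $M_n'$ is not contained in $K$. Since $n \geq 3$, the group $M_{n-1}$ is non-abelian, so $h = [a_2,a_3] \in M_n'$ has non-trivial image $[a_2,a_3] \in M_{n-1}$, hence $h \notin K$. Because convex subgroups of $(M_n,\leqslant)$ are totally ordered by inclusion, and $\overline M_n'$ cannot be contained in $K$ (it contains $h$), we must have $K \subseteq \overline M_n'$; in particular $a_1 \in \overline M_n'$. The main obstacle is that orders produced via \cref{constructOrder} on $D_n$ and extended through $Q$ always render $M_n'$ convex; the device of factoring through $M_{n-1}$ instead of $Q$ creates a convex subgroup strictly between the identity and $\overline M_n'$ that is able to absorb $a_1$, while still leaving the commutator $[a_2,a_3]$ outside it to force non-convexity of $M_n'$.
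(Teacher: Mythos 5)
Your proof is correct, and it takes a genuinely different route from the paper's. The paper works inside the module $D_n\cong M_n'$: it defines an explicit homomorphism $\varphi\colon M_n'\to\mathbb R$ killing the $[a_1,a_i]$ and sending every $[a_i,a_j]^q$ ($2\leqslant i<j$) to $1$, checks that $\varphi$ vanishes on the Jacobi relators, invokes \cref{constructOrder} to get a $Q$-invariant order on $M_n'$ with $\ker\varphi$ convex, and then assembles by hand a four-piece positive cone on $M_n$ that sandwiches $a_1$ strictly between $\ker\varphi$ and $\varphi^{-1}((0,\infty))$, verifying the semigroup, partition and conjugation-invariance axioms directly. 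You instead use the retraction $\pi\colon M_n\to M_{n-1}$ killing $a_1$, order the kernel $K=\llangle a_1\rrangle$ by restricting any bi-order of $M_n$ (which automatically gives $M_n$-conjugation invariance), and take the standard lexicographic order led by the quotient, so that $K$ is convex; the chain property of convex subgroups together with $[a_2,a_3]\notin K$ (this is where $n\geqslant 3$ enters, consistently with \cref{rank2}) then forces $K\subseteq\overline{M_n'}$, hence $a_1\in\overline{M_n'}$. Your initial reduction to $g=a_1$ via a tame automorphism covering a matrix in $\mathrm{GL}_n(\mathbb Z)$ is exactly the paper's (unexpanded) first step. What your approach buys is brevity and the avoidance of both the module computation and the explicit positive-cone verification, which you delegate to the standard lexicographic construction already recorded before \cref{convexToLexicographical}; what the paper's approach buys is explicit control over where $a_1$ sits inside the convex-subgroup chain of $M_n'$ itself, in keeping with the module-theoretic analysis of the surrounding sections. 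One cosmetic remark: your closing claim that orders produced via \cref{constructOrder} ``always render $M_n'$ convex'' is misleading as stated --- the paper's own proof uses precisely that lemma to build a non-convex example; what is true is that lexicographic orders led by $Q=M_n/M_n'$ render $M_n'$ convex, which is the situation your construction circumvents.
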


\begin{proof}
	By utilizing the automorphism of $M_n$, the problem reduces to proving that there exists an order such that $a_1\in \overline M_n'$.
	
	We define a map $\varphi: M_n' \to \mathbb R$ as follows.
	\[\varphi([a_1,a_i])=0, \varphi([a_i,a_j]^{q})=1, \text{ for }i<j\in \{2,\dots,n\}, q\in Q.\]
	Since $\varphi$ sends all Jacobi identities to 0, $\varphi$ extents to a homomorphism from $M_n'$ to $\mathbb R$. Then by \cref{constructOrder}, there exists a $Q$-invariant order on $M_n'$ such that $\varphi^{-1}((0,\infty))\subset P_{\leqslant}$.

	Since $\varphi([a_1,a_i])=0$ for all $i=2,\dots,n$, by \cref{commutatorFormula} we have $\varphi([a_1^i,g])=0$ for all $i\in \mathbb Z, g\in M_n$. Moreover, $Q/\langle\bar a_1\rangle$ is a torsion-free finitely generated abelian group and hence bi-orderable, where $\bar a_1$ is the image of $a_1$ in $Q$. We denote $\pi: M_n\to Q/\langle\bar a_1\rangle$ to be the canonical quotient map, the composition of the quotient maps of $M_n\to Q$ and $Q\to \langle \bar a_1\rangle$.
	
	Let $P_{Q/\langle\bar q\rangle}$ be a positive cone on $Q/\langle\bar a_1\rangle$. We then let 
	\[P':=P_\leqslant \sqcup\left(\bigsqcup_{i\neq 0}\varphi^{-1}((0,\infty))a_1^{i}\right)\sqcup\left(\bigsqcup_{i>0} \ker \varphi a_1^i\right)\sqcup M_n'\pi^{-1}(P_{Q/\langle\bar a_1\rangle}).\]
	Let us check that $P'$ is a positive cone.
	
	Since $P_\leqslant$ and $P_{Q/\langle \bar a_1\rangle}$ are semigroups, $g_1g_2\in P'$ if $g_1,g_2\in P_\leqslant \sqcup M_n'\pi^{-1}(P_{Q/\langle\bar a_1\rangle})$. Note that 
	\[g_1 a_1^i g_2=g_1 g_2 [a_1^i,g_2]^{a_1^{-i}} a_1^i, \forall g_1,g_2\in M_n,\]
	and $\varphi([a_1^i,g_2]^{a_1^{-i}})=0$. Then it is not hard to check the product $g_1g_2\in P'$ where $g_1\in \left(\bigsqcup_{i\neq 0}\varphi^{-1}((0,\infty))a_1^{i}\right)\sqcup\left(\bigsqcup_{i>0} \ker \varphi a_1^i\right)$ and $g_2\in P_\leqslant \sqcup M_n'\pi^{-1}(P_{Q/\langle\bar a_1\rangle})$ since $\varphi(P_\leqslant)\geqslant 0$ and $\varphi(M_n')=0$. Therefore, $P'$ is a semigroup.
	
	The inverse of $P'$ can be written as follows.
	\[P'^{-1}=P_{\leqslant}^{-1} \sqcup\left(\bigsqcup_{i\neq 0}\varphi^{-1}((-\infty,0))a_1^{i}\right)\sqcup\left(\bigsqcup_{i<0} \ker \varphi a_1^i\right)\sqcup M_n'\pi^{-1}(P_{Q/\langle\bar a_1\rangle}^{-1}).\]
	Here we use the fact that 
	\[(ga_1^i)^{-1}=a_1^{-i}g^{-1}=[a_1^i,g]g^{-1}a_1^{-i}.\]
	Thus, we have $M_n=P'\sqcup {P'}^{-1}\sqcup\{1\}$.
	
	It is not hard to check that $P'$ is invariant under conjugation with elements in $M_n$. Therefore, $P'$ defines a bi-order $<'$ on the free metabelian group $M_n$. And by its definition, we immediately have
		\[\ker \varphi <' a_1<' \varphi^{-1}((0,\infty)).\]
	Therefore, $M_n'$ is not convex under $<'$ and any power of $a_1\in \overline M_n'$.
\end{proof}

\begin{rems}
When $n=2$, the map we define is a zero map. Thus, $M_2'=\ker \varphi$.
\end{rems}

\section{Orders on free metabelian groups are never regular}

Let $G$ be a finitely generated group and $X$ a finite generating set of $G$. An order $\leqslant$ on $G$ is said to be regular (context-free) if there exists a regular (context-free) language $\mathcal L\subset X^*$ such that $\pi(\mathcal L)=P_{\leqslant}$. An order $\leqslant$ is \emph{computable} if there exists an algorithm to decide if $g\leqslant h$ for any pair of $g,h\in G$. All those properties are independent of the choice of the finite generating set \cite[Lemma 2.11]{antolin2021regular}.

By \cref{rank2} and \cref{higherRank}, there are uncountably many bi-orders on $M_n$ for $n\geqslant 2$. Hence, there always exist uncountably many orders that are not computable on $M_n$.

In this section, we will show that there exist computable orders on $M_n$ but none of them are regular when $n\geqslant 2$.

Recall that by Magnus embedding (See \cite{Magnus1939}, \cite{Baumslag1973}), a free metabelian group of rank $n$ embeds into the wreath product of two free abelian groups of rank $n$. It naturally inherits a computable left-order from the wreath product \cite{antolin2021regular}. However, the regular lexicographical left-order on the wreath product where the base group leads is not bi-invariant. One workaround is to replace the lexicographical order by one which leads by the quotient. The order will become computably bi-invariant, but no longer regular.

Let $M_n$ be the free metabelian group of rank $n$ and $A_n,T_n$ free abelian groups of rank $n$. The generating sets of $M_n,A_n,T_n$ are respectively $X=\{x_1,x_2,\dots,x_n\}$, $A=\{a_1,a_2,\dots,a_n\}$ and $T=\{t_1,t_2,\dots,t_n\}$. The Magnus embedding $\varphi:M_n\to A_n\wr T_n$ is given by the homomorphism $\varphi(x_i)=a_it_i$.
	
Let $P_A$ and $P_T$ be regular positive cones of $A_n$ and $T_n$ respectively. We define a bi-order on the base group $B=\oplus_{t\in T_n}A_n$ as follows. Firstly, note that an element $f$ in $B$ can be uniquely written as a product of conjugates of elements in $A_n$ in the following fashion:
\[f=g_1^{h_1}g_2^{h_2}\dots g_s^{h_s}, g_i\in A_n,h_i\in T_n,\]
such that $h_1>h_2>\dots >h_s$ with respect to the order on $T_n.$ Thus we define $f>1$ if the leading term $g_1>1$. It is not hard to check that this order on $B$ is invariant under the action of $T_n$. The lexicographical order on $A_n \wr T_n$ is given by the positive cone $P=P_B\cup \pi^{-1}(P_T)$ where $\pi:A_n\wr T_n\to T_n$ is the canonical quotient map. 
	
One remark is that while the lexicographical order where the base group leads is regular, the new order we just define is not regular (it can be shown using the same idea as \cite[Lemma 3.11]{antolin2021regular}). Next we will show that $P$ can be recognised by a context-free language.
	
Let $\mathcal L_A, \mathcal L_T$ be the regular languages evaluate onto $P_A$ and $P_T$ respectively. We then define
\begin{align*}
	\mathcal L=&\{v u_1 w_1 u_2 w_2\dots u_n w_n z\mid u_1\in \mathcal L_A, w_i\in \mathcal L_T, u_2,\dots,u_n\in (A\cup A^{-1})^*,\\
	&v,z\in (T\cup T^{-1})^*,z=_{F(T)}(vw_1w_2\dots w_n)^{-1},n\neq 0\}\\
	&\sqcup \{v u_1 w_1 u_2 w_2\dots u_n w_n zz'\mid w_i,z'\in \mathcal L_T,u_i\in (A\cup A^{-1})^*,z=_{F(T)}(vw_1w_2\dots w_n)^{-1}\}.
\end{align*}
The first part of $\mathcal L$ covers the positive elements in the base group, and the second part covers the rest. The actual pushdown machine is not hard to construct, as it needs a stack to store the information on $vw_1w_2\dots w_n$. Thus, $\mathcal L$ is context-free.

The membership problem of $M_n$ in $A_n\wr T_n$ is not hard to solve. Therefore, $P\cap M_n$ is recursive at the very least. Whether the set $P\cap M_n$ can be recognised as a context-free language remains unknown. 

But for $M_2$ we can indeed construct a context-free bi-order on it. Let $X=\{a,b,c\}$, where $a,b$ generate $M_2$ and $c=[a,b]$. The quotient is generated by the set $T:=\{\bar a, \bar b\}$, where $\bar a,\bar b$ are the images of $a,b$ respectively. Let $P_Q$ be a regular positive cone on $Q=M_2/M_2'$ and $\mathcal L_Q$ be the corresponding regular language. We then define
\begin{align*}
	\mathcal L=&\{v c^{t_1} w_1 c^{t_2} w_2\dots c^{t_n} w_n z\mid t_1\in \mathbb N, w_i\in \mathcal L_Q, t_2,\dots,t_n\in \mathbb Z\setminus\{0\},\\
	&v,z\in (T\cup T^{-1})^*,z=_{F(T)}(vw_1w_2\dots w_n)^{-1},n\neq 0\}\\
	&\sqcup \{v c^{t_1} w_1 c^{t_2} w_2\dots c^{t_n} w_n zz'\mid w_i,z'\in \mathcal L_Q,t_i\in \mathbb Z\setminus\{0\},z=_{F(T)}(vw_1w_2\dots w_n)^{-1}\}.
\end{align*}
It is not hard to check $\mathcal L$ is context-free and recognises a bi-invariant positive cone of $M_2$.

In summary, we have that:
\begin{theorem}
	\label{computablyBiorderable}
	Free metabelian groups of finite rank are computably bi-orderable. Moreover, $M_1\cong \mathbb Z$ admits a regular bi-order and $M_2$ admits a context-free bi-order. 
\end{theorem}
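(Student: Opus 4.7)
My plan is to prove the three assertions separately, leaning on the constructions that precede the theorem. For computable bi-orderability of $M_n$, I would verify that the lexicographic bi-order on $A_n \wr T_n$ defined above (positive cone $P = P_B \cup \pi^{-1}(P_T)$ with $P_B$ determined by the leading conjugate coefficient) restricts to a bi-order on $M_n$ via the Magnus embedding $\varphi: M_n \to A_n \wr T_n$. Bi-invariance of $P$ follows because $P_B$ was arranged to be invariant under the action of $T_n$ on $B$: conjugation by $t\in T_n$ permutes the decomposition $g_1^{h_1}g_2^{h_2}\cdots g_s^{h_s}$ with $h_1>h_2>\cdots >h_s$ order-preservingly on the exponents $h_i$, so the leading coefficient $g_1$ is carried to the new leading coefficient, and $P_T$ is already a bi-order on an abelian group.

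For the computability statement, given a word $w$ in the generators of $M_n$, I would algorithmically compute $\varphi(w)$ inside $A_n \wr T_n$ and extract its image $\pi(\varphi(w))\in T_n$. If this image is non-trivial, positivity reduces to deciding membership in $P_T$, which is trivially computable for a finitely generated free abelian group ordered lexicographically. If $\varphi(w)$ lies in the base group $B$, I compute the finite support of the associated function $T_n \to A_n$, locate the maximal element of the support (computable since $T_n$ carries a computable bi-order), and test whether the coefficient at that maximum belongs to $P_A$. This yields a decision algorithm for $P \cap \varphi(M_n)$, hence the order pulled back to $M_n$ is computable.

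The regular bi-order on $M_1\cong \mathbb Z$ is immediate: the standard positive cone of $\mathbb Z=\langle a\rangle$ is the evaluation of the regular language $a\cdot a^*$ over $\{a,a^{-1}\}$. For the context-free bi-order on $M_2$, my plan is to verify that the explicit language $\mathcal L$ displayed just before the theorem evaluates onto a bi-invariant positive cone. I would first confirm that every element of $M_2$ admits a representative of the prescribed form $v\cdot c^{t_1}w_1\cdots c^{t_n}w_n\cdot z$ (or the second form), using that $M_2'\cong \mathbb Z(x,y)$ is a cyclic $\mathbb ZQ$-module generated by $c=[a,b]$ so that every element of the derived subgroup is a product of conjugates $c^q$ for $q\in Q$. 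I would then match $\mathcal L$ against the lexicographic bi-order above transported from $A_2\wr T_2$ to $M_2$ via the Magnus embedding, checking that membership in $\mathcal L$ is equivalent to positivity. The constraint $z =_{F(T)}(vw_1w_2\cdots w_n)^{-1}$ is recognisable by a pushdown automaton using its stack to track, and then match, the free-group reduction of the prefix; the remaining structural conditions are regular, so $\mathcal L$ is context-free.

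The main obstacle I anticipate is the careful bookkeeping for the $M_2$ case: verifying that $\mathcal L$ captures exactly the positive cone rather than merely a subset of it requires showing that (a) every positive element has at least one representative word belonging to $\mathcal L$, and conversely (b) no word in $\mathcal L$ represents a non-positive or trivial element. This hinges on faithfully encoding the "leading conjugate coefficient" notion of positivity on the base group through the linear syntax of $\mathcal L$, and on showing that the ambiguity in writing an element of $M_2'$ as a product of $Q$-conjugates of $c$ does not corrupt the decision. Once this correspondence between the normal form and $\mathcal L$ is established, the theorem follows by assembling the three pieces.
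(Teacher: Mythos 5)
Your proposal follows the paper's own route essentially verbatim: restrict the lexicographic (quotient-leading) bi-order on $A_n\wr T_n$ through the Magnus embedding for computability, note the trivial regular order on $\mathbb Z$, and verify that the displayed language $\mathcal L$ over $\{a,b,c\}$ (using that $M_2'$ is the cyclic $\mathbb ZQ$-module generated by $c=[a,b]$) recognises a bi-invariant positive cone. The verification steps you flag as remaining are exactly the ones the paper leaves as "not hard to check," so the plan is sound and matches the intended argument.
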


Next we will show that none of the bi-orders are regular for $M_n, n\geqslant 2$ by proving a more general theorem for Conradian orders. Recall that a left-order is Conradian if for all positive elements $f,g$, there exists $n\in \mathbb N$ such that $g^{-1}f g^n$ is positive. A bi-order is always Conradian, since it is invariant under conjugation.

Recall that a subset $S$ of a metric space $(X,d)$ is \emph{coarsely connected} if there is $R>0$ such that the $R$-neighbourhood of $S$ is connected. The following lemma gives a description of a regular positive cone from a geometric perspective. 

\begin{lemma}[{\cite[Proposition 7.2]{alonso2022Geometry}}]
\label{coarselyConnected}
	Let $G$ be a finitely generated group. If $\leqslant$ is a regular order on $G$, then $P_{\leqslant}$ and $P_{\leqslant}^{-1}$ are coarsely connected subsets of the Cayley graph of $G$.
\end{lemma}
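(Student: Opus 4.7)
The plan is to exploit the finiteness of a deterministic automaton recognizing a regular language representing $P_\leqslant$. Let $X$ be a finite symmetric generating set for $G$, let $\mathcal{L} \subseteq X^{*}$ be a regular language with $\pi(\mathcal{L}) = P_\leqslant$, where $\pi: X^{*} \to G$ is the natural evaluation map, and let $\mathcal{A}$ be a DFA accepting $\mathcal{L}$ with $N$ states. For each state $q$ of $\mathcal{A}$ from which some accepting state is reachable, I would fix a shortest completion word $u_q \in X^{*}$ leading from $q$ to an accepting state; a standard breadth-first-search argument in the transition graph of $\mathcal{A}$ gives $|u_q| \leqslant N$.

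I would then argue that every Cayley-graph path spelled by an accepted word lies within bounded distance of $P_\leqslant$. Given $g \in P_\leqslant$, pick $w = w_1 w_2 \cdots w_n \in \mathcal{L}$ with $\pi(w) = g$ and let $g_i := \pi(w_1 \cdots w_i)$, so $g_0 = 1_G$ and $g_n = g$. If $q_i$ denotes the state of $\mathcal{A}$ after reading $w_1 \cdots w_i$, then $q_i$ admits a completion (e.g.\ via the suffix $w_{i+1} \cdots w_n$), and hence $w_1 \cdots w_i \, u_{q_i} \in \mathcal{L}$, giving $g_i \cdot \pi(u_{q_i}) \in P_\leqslant$. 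Since $|u_{q_i}| \leqslant N$, each $g_i$ lies within distance $N$ of $P_\leqslant$ in the Cayley graph, so the entire edge-path $1_G = g_0, g_1, \dots, g_n = g$ is contained in the $N$-neighbourhood of $P_\leqslant$.

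To conclude, given any $g, h \in P_\leqslant$ I would concatenate the path from $g$ back to $1_G$ produced by the above construction with the analogous path from $1_G$ to $h$; the result is a Cayley-graph path from $g$ to $h$ contained in the $N$-neighbourhood of $P_\leqslant$, so this neighbourhood is connected and $P_\leqslant$ is coarsely connected with constant $R = N$. The same argument applied to the positive cone of the opposite order, whose representing language is obtained from $\mathcal{L}$ by word-reversal together with letter-inversion and is therefore still regular, yields the corresponding statement for $P_\leqslant^{-1}$.

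The main obstacle is essentially notational rather than substantive: one must verify that the start state $q_0$ of $\mathcal{A}$ is itself completable (so that $1_G$ lies in the $N$-neighbourhood of $P_\leqslant$, making the concatenation at the identity legitimate), and that traversing an edge-path in reverse visits the same vertex set. Both are immediate once one notes that $\mathcal{L}$ is nonempty (as $G$ is nontrivial and $P_\leqslant$ infinite), and then the proof reduces entirely to the bound $|u_q| \leqslant N$ extracted from the DFA.
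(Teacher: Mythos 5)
Your argument is correct: the completion-word bound $|u_q|\leqslant N$ extracted from the DFA shows every prefix of an accepted word lies within distance $N$ of $P_\leqslant$, concatenating through the identity connects any two positive elements inside the $N$-neighbourhood, and regularity of $P_\leqslant^{-1}$ follows from closure of regular languages under reversal and letter-inversion. The paper does not prove this lemma itself but imports it verbatim from \cite[Proposition 7.2]{alonso2022Geometry}, and your proof is precisely the standard automaton argument behind that citation, so there is nothing to reconcile.
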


Next, recall that for a finitely generated group $G$ a non-trivial homomorphism $\varphi:G\to\mathbb R$ belongs to $\Sigma^1(G)$, the Bieri-Neumann-Strebel invariant (BNS invariant for short), if and only if $\varphi^{-1}((0,\infty))$ is coarsely connected. For details of BNS invariant, we refer \cite{bieri1987AGrometricInvariant}.

Following the idea of \cite[Lemma 3.11]{antolin2021regular}, we have

\begin{theorem}
\label{notRegularConradian}
	Let $G$ be a finitely generated Conradian orderable group with $\Sigma^1(G)=\emptyset$. Then no Conradian order of $G$ is regular.
\end{theorem}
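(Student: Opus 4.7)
The plan is to assume a regular Conradian order $\leqslant$ exists on $G$, extract from it a non-trivial homomorphism $\varphi:G\to\mathbb{R}$, and show that $\varphi^{-1}((0,\infty))$ is coarsely connected in the Cayley graph of $G$, contradicting $\Sigma^1(G)=\emptyset$.

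First, I would produce $\varphi$ via Conradian structure theory. Because $G$ is finitely generated, Zorn's lemma yields a maximal proper convex subgroup $C$ of $(G,\leqslant)$: the union of a strictly ascending chain of proper convex subgroups remains a proper convex subgroup, as otherwise the finitely many generators would already lie in a single term of the chain. Conrad's classical theorem on Conradian-ordered groups then guarantees that $C$ is normal in $G$ and that the induced order on $G/C$ is Archimedean; by H\"older's theorem, $G/C$ embeds order-preservingly into $(\mathbb{R},+)$, and composing gives a non-trivial homomorphism $\varphi : G \to \mathbb{R}$. Convexity of $C$ identifies $\varphi^{-1}((0,\infty))$ with $P_{\leqslant} \setminus C$: if $g\in P_{\leqslant}\setminus C$, then $g\leqslant c$ for some $c\in C$ would yield $1<g\leqslant c$ and so $g\in C$ by convexity, a contradiction; hence $g>c$ for every $c\in C$ and $\varphi(g)>0$.

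Second, I would verify coarse connectivity of $\varphi^{-1}((0,\infty))=P_{\leqslant}\setminus C$. By \cref{coarselyConnected}, some $R>0$ makes the $R$-neighbourhood $N_R(P_{\leqslant})$ connected in the Cayley graph. Since $\varphi$ is non-trivial, after possibly passing to an inverse some generator $x$ satisfies $\varphi(x)>0$. For every $g\in P_{\leqslant}\cap C$, the neighbour $gx$ has $\varphi(gx)=\varphi(x)>0$, so $gx\in P_{\leqslant}\setminus C$ with Cayley distance $d(g,gx)=1$. Hence $P_{\leqslant} \subseteq N_1(P_{\leqslant}\setminus C)$ and thus $N_R(P_{\leqslant})\subseteq N_{R+1}(P_{\leqslant}\setminus C)$. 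Any two points of $P_{\leqslant}\setminus C$ are joined by a path in $N_R(P_{\leqslant})\subseteq N_{R+1}(P_{\leqslant}\setminus C)$, while every other vertex of $N_{R+1}(P_{\leqslant}\setminus C)$ connects to $P_{\leqslant}\setminus C$ along a geodesic of length at most $R+1$ whose intermediate vertices again lie in $N_{R+1}(P_{\leqslant}\setminus C)$; hence $N_{R+1}(P_{\leqslant}\setminus C)$ is connected, proving $\varphi\in\Sigma^1(G)$ and contradicting the hypothesis.

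The main obstacle I anticipate is the first step: rigorously invoking Conrad's theorem to ensure that the maximal proper convex subgroup $C$ is normal in $G$ with Archimedean quotient. Normality comes from the Conradian axiom applied at the jump $C\triangleleft G$, and the Archimedean property of $G/C$ follows from maximality, since any proper convex subgroup of $G/C$ would pull back to a convex subgroup strictly between $C$ and $G$. Once $\varphi$ is in hand, the geometric step is essentially automatic: a single positive generator $x$ gives a uniform one-step push of $P_{\leqslant}\cap C$ into $P_{\leqslant}\setminus C$, upgrading coarse connectivity of $P_{\leqslant}$ to that of $P_{\leqslant}\setminus C$.
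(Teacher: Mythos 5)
Your proof is correct and follows essentially the same route as the paper: take the maximal proper convex subgroup, use Conrad/H\"older to produce a character $\varphi$ with $\varphi^{-1}((0,\infty))=P_{\leqslant}\setminus C$, transfer coarse connectivity from $P_{\leqslant}$ to $\varphi^{-1}((0,\infty))$ by a one-step (distance~$1$) pushing argument, and contradict $\Sigma^1(G)=\emptyset$. The only, harmless, differences are that you compose with the H\"older embedding directly so that $\ker\varphi=C$ exactly (the paper instead realizes $G/C$ as a lattice and decomposes $P_{\leqslant}$ into $f^{-1}((0,\infty))$ plus cosets), and you omit the paper's parallel argument for $-f$ and $P_{\leqslant}^{-1}$, which is indeed not needed once $\Sigma^1(G)$ is assumed empty.
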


\begin{proof}
	Let $G$ be a finitely generated Conradian orderable group with a Conradian order $\leqslant$. Consider the maximal proper subgroup $H\leqslant G$. Such $H$ exists and is normal \cite{mura1977OrderableGroups}. We have that $G/H$ is Archimedean with respect to the induced order. In particular, by H\"{o}lder's theorem $G/H$ is a free abelian group of finite rank \cite{holder1901}.

	Let $\pi:G\to G/H$ be the canonical quotient map. Since $H$ is convex, $P_{\leqslant}$ induces an order $\widetilde \leqslant$ on $G/H$. 
			
	We claim that there exists a homomorphism $\varphi:G/H \to \mathbb R$ such that $\varphi^{-1}((0,\infty))$ consists of positive elements with respect to $\widetilde \leqslant$. Since $G/H$ is a free abelian group of finite rank, then $\widetilde \leqslant$ corresponds to a hyperplane. Let $\mathbf{n}$ be a normal vector to the hyperplane, where $\mathbf{n}$ points to the positive side. Then the map $\varphi(g):=g\cdot \mathbf{n}$ is the homomorphism we are seeking for.
	
	Now let $f:=\varphi\circ \pi:G\to \mathbb R$. Note that $\ker \varphi$ is finitely generated as it is a subgroup of a free abelian group of finite rank. Let $P_1$ be the positive cone of $H$ and $g_1H,g_2H,\dots,g_{t}H$ be the generating set of $\ker \varphi$. Then we have
	\[P_{\leqslant}=f^{-1}((0,\infty))\cup P_1 \cup \left(  \bigcup_{g_1^{s_1}g_2^{s_2}\dots g_{t}^{s_{t}}H\ \widetilde>\ 1} g_1^{s_1}g_2^{s_2}\dots g_{t}^{s_{t}}H\right), s_i\in \mathbb Z.\]
	Since there exists a generator that is sent to a positive number, thus in the Cayley graph of $G$ the distance between $f^{-1}((0,\infty))$ and $P_1$ or any coset $g_1^{s_1}g_2^{s_2}\dots g_{t-1}^{s_{t-1}}H$ is 1. Thus, $f^{-1}((0,\infty))$ is coarsely connected if and only if $P_{\leqslant}$ is coarsely connected. By BNS theory, $f\in \Sigma^1(G)$ if and only $P_{\leqslant}$ is coarsely connected. Since $H$ contains the derived subgroup, we have 
	\[( g_1^{s_1}g_2^{s_2}\dots g_{t}^{s_{t}}H)^{-1}= g_1^{-s_1}g_2^{-s_2}\dots g_{t}^{-s_{t}}H.\]
	Thus,  $-f\in \Sigma^1(G)$ if and only $P_{\leqslant}^{-1}$ is coarsely connected. 
	
	If $P_{\leqslant}$ is a regular positive cone, both $P_{\leqslant} $ and $P_{\leqslant}^{-1}$ are coarsely connected by \cref{coarselyConnected}. Thus, $f$ and $-f$ belong to $\Sigma^1(G)$, which is a contradiction since $\Sigma^1(G)$ is empty.
\end{proof}

Note that for free metabelian group $M_n$ where $n\geqslant 2$, the centraliser of $M_n'$ in $\mathbb ZQ$ is trivial. Thus, $\Sigma^1(M_n)$ is empty \cite{bieri1980valuations}.

\begin{corollary}
	\label{notRegularMetabelian}
	The non-abelian free metabelian group of finite rank does not admit a regular Conradian order. In particular, no bi-order on a non-abelian free metabelian of finite rank is regular.
\end{corollary}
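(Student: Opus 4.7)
The plan is to deduce \cref{notRegularMetabelian} as a direct application of the preceding \cref{notRegularConradian} to $G = M_n$ for $n\geqslant 2$. This requires checking the two hypotheses of that theorem: that $M_n$ is Conradian orderable, and that the BNS invariant $\Sigma^1(M_n)$ is empty.

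For the first hypothesis, I would recall from the introduction that $M_n$ is bi-orderable via the Magnus embedding into $\mathbb Z^n \wr \mathbb Z^n$, and that every bi-order is automatically Conradian: if $f,g$ are positive with respect to a bi-order, then by conjugation-invariance $g^{-1}fg$ is positive, so $g^{-1}f g \cdot g > g$, i.e.\ the Conradian condition holds already with exponent $1$. Hence $M_n$ is Conradian orderable, and moreover any bi-order on $M_n$ is Conradian.

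For the second hypothesis, I would invoke the Bieri--Strebel computation of the BNS invariant of a finitely generated metabelian group in terms of the $\mathbb ZQ$-module structure on the derived subgroup, where $Q = G/G'$. As noted in the paragraph immediately preceding the corollary, for $n\geqslant 2$ the centraliser of $M_n'$ inside $\mathbb ZQ$ is trivial, and by the results of \cite{bieri1980valuations} this forces $\Sigma^1(M_n) = \emptyset$. I would simply cite this fact rather than reprove it.

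With both hypotheses verified, \cref{notRegularConradian} yields that no Conradian order on $M_n$ is regular, which is the first statement of the corollary. The second statement follows at once from the observation made above that every bi-order is Conradian. The only non-routine ingredient is the vanishing of $\Sigma^1(M_n)$, which is the place where one really uses that the group is free metabelian (as opposed to some other Conradian-orderable group); everything else is a formal assembly of results already established in the paper.
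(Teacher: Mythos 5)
Your proposal is correct and follows exactly the route the paper takes: the corollary is obtained by applying \cref{notRegularConradian} to $M_n$, using that bi-orders are Conradian and that $\Sigma^1(M_n)=\emptyset$ for $n\geqslant 2$ because the centraliser of $M_n'$ in $\mathbb ZQ$ is trivial (citing \cite{bieri1980valuations}). Nothing is missing.
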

\bibliography{../MyLibrary}{}
\bibliographystyle{alpha}

\end{document}